\def\C{\mathbb{C}}
\def\R{\mathbb{R}}
\def\Z{\mathbb{Z}}
\def\F{\mathcal{F}}
\def\F{\mathcal {F}}
\def\T{\mathbb{T}}
\def\Heis{{\sf{Heis}} }
\def\Der{{\sf{Der}}}
\def\heis{{\mathfrak{heis}} }
\def\isom{\mathsf{Isom}}
\def\Isom{{\sf Isom}}
\def\Aff{\sf{Aff}}
\def\Sim{\sf{Sim}}
\def\Euc{\mathsf{Euc}}
\newcommand\ip{{\langle\cdot \,,\cdot \rangle}}
\def\Aut{{\sf{Aut}}}
\def\GL{{\sf{GL}}}
\def\O{{\sf{O}}}
\def\SO{{\sf{SO}}}
\def\Span{{\sf{Span}} }
\def\ad{\mathsf{ad}}
\def\Ad{\mathsf{Ad}}
\def\Mink{{\sf{Mink}} }
\def\AdS{{\sf{AdS}} }
\def\det{{\sf{det}}}
\def\p{{\mathfrak{p}}}
\def\k{{\mathfrak{k}}}
\def\a{{\mathfrak{a}}}
\def\g{{\mathfrak{g}}}
\def\i{{\mathfrak{i}}}
\def\z{{\mathfrak{z}}}
\def\z{{\mathfrak{z}}}
\def\q{{\mathfrak{q}}}
\def\u{{\mathfrak{u}}}
\newtheorem{theorem}{{Theorem}}[section]
\newtheorem{proposition}[theorem]{{Proposition}}%[section]
\newtheorem{isom.ext}[theorem]{{Trivial isometric extension}}%[section]
\newtheorem{definition}[theorem]{{Definition}}%[section]
\newtheorem{lemma}[theorem]{{Lemma}}%[section]
\newtheorem{cor}[theorem]{{Corollary}}%[section]
\newtheorem{fact}[theorem]{{\sc Fact}}%[section]
\newtheorem{remark}[theorem]{{Remark}}%[section]
\newtheorem{question}[theorem]{{Question}}%[section]
\newtheorem{conv}[theorem]{{Convention}}%[section]
\newtheorem{example}[theorem]{{Example}}%[section]
\newcommand\blue{\color{blue}}
\definecolor{purple}{rgb}{0.65,0.12,0.94}
\definecolor{forestgreen}{rgb}{0.4,0.64,0.13}
\begin{document}
	\date{ \today}
 \title[Topology and Dynamics of compact plane waves]{Topology and Dynamics of compact plane waves
 }
\author[M. Hanounah]{Malek Hanounah}
\address{Institut f\"ur Mathematik und Informatik \hfill\hfill\break\indent
Walther-Rathenau-Str. 47,
17489 Greifswald}
\email{malek.hanounah@uni-greifswald.de}

\author [I. Kath]{Ines Kath}
\address{Institut f\"ur Mathematik und Informatik \hfill\hfill\break\indent
Walther-Rathenau-Str. 47,
17489 Greifswald}
\email{ines.kath@uni-greifswald.de}

\author [L. Mehidi]{Lilia Mehidi}
\address{Departamento de Geometria y Topologia\hfill\break\indent
Facultad de Ciencias, Universidad de Granada, Spain
%\hfill\break\indent 
}
\email{lilia.mehidi@ugr.es}
 
\author[A. Zeghib]{Abdelghani Zeghib }
\address{UMPA, CNRS, 
%Unit\'e de Math\'ematiques Pures et Appliqu\'ees
%\hfill\break\indent
\'Ecole Normale Sup\'erieure de Lyon\hfill\break\indent
46, all\'ee d'Italie
%\hfill\break\indent
69364 LYON Cedex 07, FRANCE}
\email{abdelghani.zeghib@ens-lyon.fr 
\hfill\break\indent
\url{http://www.umpa.ens-lyon.fr/~zeghib/}}

\date{\today}

\begin{center}
\begin{abstract} 
We study compact locally homogeneous plane waves. Such a manifold is a quotient of a homogeneous plane wave $X$ by a discrete subgroup of its isometry group. This quotient is called standard if the discrete subgroup is contained in a connected subgroup of the isometry group that acts properly cocompactly on $X$. We show that compact quotients of homogeneous plane waves are ``essentially" standard; more precisely, we show that they are standard or `semi-standard'.  We find conditions which ensure that a quotient is not only semi-standard but even standard. As a consequence of these results, we obtain that the flow of the parallel lightlike vector field of a compact locally homogeneous plane wave is equicontinuous.
\end{abstract}
\end{center}
\maketitle
\tableofcontents

\section{Introduction}
 
The general theme of the present article is the study of the fundamental group and the isometry group of compact locally homogeneous Lorentzian manifolds. More precisely, the Lorentzian metrics that we consider here are plane waves (see Definition~\ref{defpw}). Compact locally homogeneous plane waves are quotients of a homogeneous plane wave by a discrete subgroup of its isometry group. So they fit into the following more general situation. Let $X = G/H$ be a homogeneous space (not necessarily endowed with a metric), and let $\Gamma \subset G$ be a discrete subgroup acting 
properly, cocompactly and freely on $X$. Then $\Gamma\backslash X$ is a manifold. It is called a compact quotient of $X$. 
This leads to the problem of describing the discrete subgroups $\Gamma \subset G$ acting properly and cocompactly on $X$.   
Of particular interest is such a description if $X$ is a semi-Riemannian homogeneous space. The Riemannian case has a long history, especially in the case of constant sectional curvature. The pseudo-Riemannian situation is comparatively less studied and involves a lot of additional difficulties.
   
In the following we give a little insight into the problems that we want to deal with and thereby review some classical results. We also present our results in brief. We start with flat compact Riemannian and Lorentzian manifolds before turning to our actual object of study, compact locally  homogeneous plane waves.

\begin{conv}
We say that a group has a virtual property P if it contains a finite index subgroup which has property P.
\end{conv}

   \subsection{Flat case} \label{Subsection: Flat case} The flat Riemannian case, that is when 
   $X$ is the Euclidean space, corresponds to the crystallographic problem handled by Bieberbach's theorem, which states that the fundamental group $\Gamma \subset \Isom(\R^n) =
   \O(n) \ltimes \R^n$ is in fact contained in $\R^n$  (up to  finite index) see \cite{bieberbach1911bewegungsgruppen}. The converse is also true, namely, any group which is torsion-free, finitely generated and virtually isomorphic to $\Z^{n}$ can be realized as the fundamental group of a compact flat Riemannian manifold \cite[Theorem 1.3]{milnor1977fundamental}.
   
   The Lorentzian flat case is much more complicated.  
   We can summarize the state of current research as follows.

  \begin{theorem}\label{Introduction-Theorem: flat case} Let $M^{n+1}$ be a connected compact flat Lorentzian manifold. Then:
     
     \begin{enumerate}
     
     \item  {\sc Completeness:}   $M$ is the quotient of the Minkowski space $\Mink^{1, n}$ by a discrete subgroup $\Gamma$
     of $\Isom (\Mink^{1, n})$ acting properly and freely.
     \item[(2)] {\sc Fundamental group:} $\Gamma$ is virtually polycyclic. More exactly, $\Gamma$ is either virtually nilpotent or virtually an abelian extension of $\Z$, i.e. virtually $\Z\ltimes \Z^n$.
     \item[(3)] {\sc Standardness:}  Up to a finite index, $\Gamma$ is a cocompact lattice in a solvable connected Lie subgroup $L$
     of $\Isom (\Mink^{1, n})$ acting simply transitively on $ \Mink^{1, n}$. (In other words, $M = \Gamma \setminus L$, where $L$ is a Lie group endowed with a complete and flat left invariant Lorentzian metric). 
     \end{enumerate}
 \end{theorem}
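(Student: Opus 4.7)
The plan is to establish the three statements in order, assembling several deep results on flat Lorentzian and affine manifolds.

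For completeness (part 1), I would work with the developing map $\mathrm{dev}\colon \widetilde M \to \Mink^{1,n}$ and the holonomy representation $\rho\colon \pi_1(M) \to \Isom(\Mink^{1,n}) = \O(1,n) \ltimes \R^{n+1}$. The goal is to show that $\mathrm{dev}$ is surjective, hence a covering. The key tool is Carri\`ere's notion of discompacity: the linear holonomy lies in $\O(1,n)$, whose discompacity equals one. Carri\`ere's dynamical argument (iterating holonomy elements near the frontier of the developing image and using the bounded eccentricity of the iterates) then forces $\mathrm{dev}$ onto. Completeness identifies $M$ with $\Gamma \backslash \Mink^{1,n}$, where $\Gamma = \rho(\pi_1(M))$ acts properly and freely.

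For the group structure (part 2), I would first apply the Lorentzian case of the Auslander conjecture, due to Goldman and Kamishima, which asserts that $\Gamma$ is virtually polycyclic; at heart this analyzes the dynamics of the linear part $L(\Gamma) \subset \O(1,n)$ on the sphere at infinity of hyperbolic $n$-space to exclude Schottky-type free subgroups. To obtain the dichotomy, I would split on whether $L(\Gamma)$ is relatively compact or not. If it is, a Bieberbach-type argument yields a finite-index translation subgroup, so $\Gamma$ is virtually abelian and in particular virtually nilpotent. Otherwise some element of $L(\Gamma)$ must be loxodromic (preserving a timelike geodesic with nontrivial dilation); Fried's analysis of flat spacetimes then identifies a finite-index subgroup with a semidirect product $\Z \ltimes \Z^n$, the cyclic factor being generated by a loxodromic element normalizing a maximal translation subgroup.

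For standardness (part 3), with $\Gamma$ virtually polycyclic and acting properly cocompactly by affine isometries, I would invoke the Fried--Goldman theory of affine crystallographic groups: up to finite index, $\Gamma$ is a cocompact lattice in the identity component $L$ of its Zariski closure in $\Isom(\Mink^{1,n})$, and $L$ is solvable and acts simply transitively on $\Mink^{1,n}$. Pulling back the flat Lorentzian metric via this simply transitive action yields a complete left-invariant flat Lorentzian metric on $L$, and one obtains $M = \Gamma \backslash L$. The main obstacle is clearly part (1): Carri\`ere's completeness theorem is genuinely deep, its proof relying on a delicate dynamical analysis with no Riemannian analogue. Parts (2) and (3), by contrast, are largely algebraic and, once completeness is in hand, reduce to applying structural results on discrete subgroups of $\Isom(\Mink^{1,n})$.
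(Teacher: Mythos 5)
The paper offers no proof of this theorem: it is a survey statement, with (1) attributed to Carri\`ere, (2) to Goldman--Kamishima and Grunewald--Margulis, and (3) to Fried, Goldman and Kamishima. Your proposal assembles exactly these ingredients in the same order (completeness first, then the algebraic structure of $\Gamma$, then the syndetic hull), so at the level of strategy it coincides with the paper's treatment, and your identification of Carri\`ere's discompacity argument for (1) and the Fried--Goldman theory for (3) is accurate.

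One step in your sketch of (2) is, however, genuinely wrong as stated: from the failure of relative compactness of the linear holonomy $L(\Gamma)\subset\O(1,n)$ you cannot conclude that some element is loxodromic. A subgroup whose linear part is unipotent (parabolic) is unbounded yet contains no loxodromic element, and this is precisely the regime that produces the non-abelian virtually nilpotent fundamental groups the first alternative of the theorem is designed to cover (e.g.\ lattices of Heisenberg type acting affinely with unipotent linear part). Your dichotomy therefore has to be a trichotomy --- linear part relatively compact (Bieberbach, virtually abelian), linear part virtually unipotent (virtually nilpotent), or containing a loxodromic element (virtually $\Z\ltimes\Z^n$ with partially hyperbolic action) --- and the middle case needs its own argument. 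That missing case is exactly what the Grunewald--Margulis classification, which the paper cites for the refined form of (2), supplies; as written, your argument simply does not address it.
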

  Item (1) was proved by  Carri\`ere \cite{Carriere}. The first part of item (2) is proved by Goldman and  Kamishima \cite{goldman1984fundamental}, and  a classification up to abstract commensurability is achieved by Grunewald and Margulis \cite{grunewald1988transitive}. Item (3) is done by Fried, Goldman, and Kamishima  \cite[\S 1]{fried1983three} \cite{goldman1984fundamental}. For an excellent survey on this topic see \cite{carriere1_Bieberbach}.
  
\bigskip 
We are going to prove the following result (see  Section \ref{Section: Flat case}).
\begin{theorem}[{\sc Parallel fields}]\label{Introduction-Theorem: parallel fields}  If $M^{n+1}$ is a connected compact flat Lorentzian manifold, then up to taking a finite cover, $M$ has a parallel vector field $V$. \\
     $\bullet$ If $V$ is timelike, then $M$ is a flat Lorentzian torus, that is 
     $\Gamma$ consists of translations, and $V$ is a linear flow. \\
     $\bullet$ If $V$ is lightlike, then its flow is equicontinuous. \\
     $\bullet$ If $V$ is spacelike, then its dynamics is encoded in the linear part of $L$, that is, its image under the linear part projection $\Isom (\Mink^{1, n}) = \SO(1, n) \ltimes  \R^{1+n} \to \SO(1, n)$. In particular, there are examples where the flow of $V$ is equicontinuous, Anosov  or more generally partially hyperbolic.\\  
     $\bullet$ In all cases, up to a finite cover, $M$ admits a parallel field of lightlike directions, i.e. an oriented parallel lightlike line field.
     \end{theorem}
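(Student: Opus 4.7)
The strategy is to leverage Theorem~\ref{Introduction-Theorem: flat case}(3). After a finite cover we may write $M=\Gamma\backslash\Mink^{1,n}$ with $\Gamma$ a cocompact lattice in a connected solvable subgroup $L\subset\Isom(\Mink^{1,n})=\O(1,n)\ltimes\R^{1+n}$ acting simply transitively, and let $\pi\colon\Isom(\Mink^{1,n})\to\O(1,n)$ be the linear part. Parallel vector fields on $\Mink^{1,n}$ are exactly the constant vector fields, and a constant field $v$ descends to $M$ iff $\pi(\gamma)v=v$ for every $\gamma\in\Gamma$; thus parallel vector fields on $M$ correspond bijectively to the $\pi(\Gamma)$-fixed subspace $(\R^{1+n})^{\pi(\Gamma)}$. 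For existence: since $\pi(L)$ is connected solvable in $\O(1,n)$, Lie's theorem on its complexified action on $\R^{1+n}$, combined with the Lorentzian form, produces a preserved isotropic flag over $\R$; a further finite-index reduction on $\Gamma$ kills possible $\pm 1$ characters and yields a nonzero $v_0\in(\R^{1+n})^{\pi(\Gamma)}$, giving the parallel Killing field $V$ on $M$.

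The remaining items follow by a case analysis on the causal type of $v_0$. If $v_0$ is timelike, $\pi(\Gamma)\subset\Stab(v_0)\cong\O(n)$ is compact; Bieberbach-type rigidity then forces a finite-index subgroup of $\Gamma$ to consist of pure translations, so $M$ is a flat Lorentzian torus and $V$ is a linear flow. If $v_0$ is spacelike, $\pi(\Gamma)\subset\O(1,n-1)$ may be non-compact, and the dynamics of the $V$-flow is read off the action of $\pi(\Gamma)$ on the Lorentzian complement $v_0^\perp$; explicit equicontinuous, Anosov, and partially hyperbolic examples are constructed from suitable choices of $L$, e.g.\ suspensions of hyperbolic automorphisms of tori inside $\SO(1,n-1)$. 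If $v_0$ is lightlike, then up to finite index $v_0$ may be taken $\pi(L)$-fixed (by Zariski density of cocompact lattices in solvable Lie groups in their ambient linear representations), and $\pi(L)\subset\Stab(v_0)\cong\O(n-1)\ltimes\R^{n-1}$. The proof of equicontinuity of the flow of $V$ amounts to showing that the closure of $\{t_{sv_0}\}_{s\in\R}$ in $\Isom(M)^0$ is a compact abelian Lie subgroup; this is the analytical core of the theorem.

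Finally, to produce a parallel lightlike direction in all cases: if $v_0$ is null there is nothing to do; if $v_0$ is timelike, $\pi(\Gamma)$ is virtually trivial and any null direction is fixed after a further finite-index passage; if $v_0$ is spacelike, one re-applies the flag construction inside the Lorentzian orthogonal $v_0^\perp$ to obtain a $\pi(\Gamma)$-invariant null line, passing to a double cover to orient it. The principal obstacle is the equicontinuity claim in the lightlike case: since $\pi(\Gamma)$ is only virtually unipotent in $\Stab(v_0)$, a direct averaging of an auxiliary Riemannian metric over $\pi(\Gamma)$ fails; the proof must exploit both the simply transitive solvable structure of $L$ and the polycyclic rigidity of $\Gamma$ from Theorem~\ref{Introduction-Theorem: flat case}(2) to control the one-parameter group of translations along $\R v_0$.
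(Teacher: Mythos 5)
Your outline follows the paper's general route (reduce via Theorem \ref{Introduction-Theorem: flat case}(3) to a simply transitive solvable $L$, apply Lie's theorem to the linear part $L'=\pi(L)$, then split into causal cases), but there is a genuine gap at the hardest step of the existence proof. Lie's theorem only yields an $L'$-invariant \emph{direction} $l$. If $l$ is spacelike or timelike the scaling character is $\pm 1$-valued and connectedness of $L'$ already gives a fixed vector. But if $l$ is lightlike, $L'$ acts on $l$ through a character with values in $\R_{>0}$ (a boost), and no passage to a finite-index subgroup of $\Gamma$ can kill it: the image of an infinite subgroup of $\R_{>0}$ under restriction to finite index is still infinite. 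So your claim that ``a further finite-index reduction on $\Gamma$ kills possible $\pm1$ characters and yields a nonzero $v_0$'' fails precisely in the case $L\not\subset\mathsf{SPol}$. This is where the paper does its real work (Step 2 of the proof of Theorem \ref{Theorem: solvable implies invariant vector}): using the \emph{unimodularity} of $L$ together with the simple transitivity of its action, it shows that $\mathfrak{p}=\Span(W,Z)\subset\mathfrak{l}$, that $L\subset (H\times K)\ltimes T$, and then --- via the Grunewald--Margulis description of simply transitive subgroups of $\O(n)\ltimes\R^n$ (Lemma \ref{Lemma: solvable in O(n)xR^n}) --- that $L'$ fixes a \emph{spacelike} vector in $\mathfrak{p}^\perp$. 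In other words, when the invariant lightlike direction cannot be promoted to an invariant vector, the parallel field must be found elsewhere, and nothing in your proposal produces it.

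A second, smaller issue: you do not actually prove equicontinuity in the lightlike case; you only identify it as ``the analytical core'' and list ingredients. In the paper this is not done by a direct analysis of the translation flow: one observes that $\Gamma\subset\mathsf{SPol}\cong G_\rho$, and equicontinuity then follows from the standardness/semi-standardness results of Section \ref{Section: syndetic hull, standard semi-Standrad} --- a finite-index subgroup $\Gamma'$ is a cocompact lattice in a closed subgroup containing the central (lightlike translation) one-parameter group $Z$, so $\Gamma'\backslash\overline{Z\Gamma'}$ is compact, and averaging a $V$-invariant Riemannian metric over $\Gamma'\backslash\Gamma$ finishes. Your remaining items (timelike case via Bieberbach, spacelike examples via suspensions, and the invariant lightlike direction via iterating the flag argument on orthogonal complements) do match the paper's treatment.
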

 To our best knowledge, existence of parallel vector fields was never stated in the literature. It can be deduced by carefully reading the more general results in \cite{grunewald1988transitive}. Our proof is only targeted on the existence of such a vector and is therefore simpler.

Example \ref{Example: SOL-Anosov} is an interesting example of a non-equicontinuous spacelike parallel flow.

 \begin{remark}[Kundt spacetimes]\label{remark: Kundt space- weakly plane wave}
      It is worth pointing out that by Theorem \ref{Introduction-Theorem: flat case}, any flat compact Lorentzian manifold is ``weakly'' plane wave, i.e. a plane wave in the sense of Definition \ref{defpw}, but with a parallel lightlike line field instead of a parallel lightlike vector field. This class belongs to the so-called ``weakly" Brinkmann manifolds, i.e. Lorentzian manifolds with a parallel lightlike line field. The latter class fits in a larger class of manifolds called locally Kundt spacetimes, defined as those having a codimension one lightlike geodesic foliation. These spacetimes are of a great importance in general relativity, see \cite{boucetta2022kundt} for more details on the subject. 
 \end{remark}

\begin{remark}[Constant curvature] Klingler \cite{klingler1996completude} extended Carri\`ere's result by showing that any compact Lorentzian manifold of constant curvature is complete. Namely, in curvature $-1$ it is the quotient of anti-de-Sitter space $\AdS^{1,n}$ by a discrete subgroup of $\isom(\AdS^{1,n})$. In the positive curvature, due to a classical result known as the Calabi-Markus phenomena \cite{markus-calabi}, there are no compact Lorentzian manifolds with positive constant curvature. On the other hand, compact quotients exist in negative curvature only if $n$ is even.  In dimension $3$, Goldman shows in \cite{goldman1985nonstandard} that there are non-standard (for the definition of standard see Section \ref{Section: syndetic hull, standard semi-Standrad}) compact quotients. However, it is conjectured \cite{zeghib_AntideSitter} that for $n>3$, all compact quotients are standard. For recent developments on proper actions in the constant curvature case, see \cite{Kassel-Gueritaud-2015, Kassel-Danciger-2016}. 
\end{remark}

 Our aim in the present article is to generalize Theorem \ref{Introduction-Theorem: flat case} and Theorem \ref{Introduction-Theorem: parallel fields} to the case where Minkowski space is replaced by a plane wave spacetime.  Plane waves can be thought of as a generalization as well as a deformation of Minkowski spacetime. They are of great mathematical and physical interests, which can be seen from the amount of recent research on the topic. However, results on compact plane waves are rare since most of research is from the physical point of view, and compact Lorentzian manifolds have a bad causal behavior. More precisely, as stated in the beginning of the section, we consider here compact locally homogeneous  plane waves. Since compact plane waves are complete by \cite[Theorem B]{Leis-Sch} (see also \cite[Theorem 1.2]{mehidi2022completeness} for completeness in the more general context of compact Brinkmann spacetimes), the universal cover of a compact locally homogeneous plane wave is homogeneous.

\subsection{Homogeneous plane waves}\label{Subsection: Grho geometry} A Lorentzian manifold with a lightlike parallel vector field $V$ is called a Brinkmann manifold. The orthogonal distribution $V^\perp$ is integrable and defines a foliation denoted by $\mathcal{F}$, having lightlike geodesic leaves. Plane waves are particular Brinkmann spaces, defined as follows:

\begin{definition}\label{defpw}
A plane wave is a Brinkmann manifold such that the leaves of $\F$ are flat, and $\nabla_X R = 0$, for any $X$ tangent to $V^{\perp}$, where $R$ is the Riemannian tensor. 
\end{definition}

The $(2n+1)$-dimensional Heisenberg group $\Heis_{2n+1}=\R^n \ltimes \R^{n+1}$ is the subgroup of $\Aff(\R^{n+1})$ defined by
$$\Heis_{2n+1}=\left\{\begin{pmatrix}
    1 &\alpha^{\top} \\
    0 &I_n
\end{pmatrix} | \  \alpha \in \R^{n} \right\}\ltimes \R^{n+1}.$$
Denote by $A^+=\R^n$ the abelian subgroup of unipotent matrices, and by  $A^-$ the subgroup $\{0\} \times \R^n$ of the translation part. 

\begin{definition}[Affine unimodular lightlike group]\label{Definition-Introduction: Affine unimodular lightlike group}

Let $\R^{n+1}$ with coordinates $(x_0,x_1,..,x_n)$ be endowed with the  lightlike quadratic form  $q_0 := x_1^2+..+ x_n^2$.

The group of affine isometries of $q_0$ preserving a lightlike vector is $\mathsf{L}_{\u}(n):= (\O(n) \ltimes \R^n) \ltimes \R^{n+1}$. 
$$\mathsf{L}_{\u}(n):=\left\{\begin{pmatrix}
1 & \alpha^{\top}\\
0 & A 
\end{pmatrix} | \  \alpha\in \R^{n}, A\in \O(n) \right\}\ltimes \R^{n+1},$$
 and $\R^n \ltimes \R^{n+1} \subset \mathsf{L}_{\u}(n)$ is the Heisenberg group.  
It  will be called the \textit{affine unimodular lightlike group}. It can also be seen as the group of diffeomorphisms of $R^{n+1}$ preserving $q_0$, a flat affine connection, and a lightlike vector.
\end{definition}
A manifold modeled on $(\mathsf{L}_{\u}(n),\R^{n+1})$ will be said to have an affine unimodular lightlike geometry  in the sense of geometric structures $(\text{see \cite[Chapter 3]{thurston2022geometry} and \cite{goldman2022geometric}})$. \\

Plane waves admit an isometric infinitesimal action of the Heisenberg algebra (see \cite[Section 3.2]{Blau}), whose action preserves individually the leaves of $\mathcal{F}$ and is locally transitive on each $\mathcal{F}$-leaf. Thus, general plane waves have already local cohomogeneity $1$. Moreover, since the leaves of $\F$ are flat and lightlike, with a tangent parallel lightlike vector field $V$, they have an affine unimodular lightlike  geometry.  

Let $X$ be a non-flat simply connected homogeneous plane wave of dimension $n+2$. The connected component of the  isometry group of $X$ is computed in \cite[Theorem 5.13]{Content1} and has the following form  
\begin{equation*}\label{Eq: G_rho}
    G_{\rho}= (\R \times K) \ltimes_{\rho} \Heis_{2n+1},
\end{equation*}
where $K$ is a closed subgroup of $\SO(n)$ and $\rho$ is a suitable homomorphism from $\R\times K$ to $\Aut(\Heis_{2n+1})$. The space $X$ identifies with the quotient  $X_{\rho} =G_{\rho}/I$, with $I = K \ltimes A^+$.  And the codimension $1$ foliation $\mathcal{F}$ is given by the left action of the normal subgroup $K \ltimes \Heis_{2n+1}$, it is invariant by the left action of $G_\rho$. More details are given in Section \ref{Section: Preliminary facts}.

There are many works on  general homogeneous plane waves, for instance \cite{Blau}, \cite{Leis}, but none of these is interested in compact quotients of such spaces. A systematic study of compact quotients of Cahen-Wallach spaces is carried out by Kath and Olbrich in \cite{KO}. These spaces, first introduced in  \cite{cahen1970lorentzian}, are exactly the indecomposable symmetric plane waves.  In this case, the $\rho$-action in the semi-direct product $G_{\rho}$ is semi-simple. For general locally homogeneous plane waves, this is not necessarily the case.
Compact quotients of general homogeneous plane waves are considered in the recent paper \cite{allout2022homogeneous}, in dimension~$3$.

\subsection{Fundamental groups of compact quotients}

In the affine case, i.e. $M=\Gamma\backslash X$, where $G=\Aff(\R^n)$ and $X=\Aff(\R^n)/\GL_{n}(\R)$, a conjecture of Auslander states that the fundamental group of a compact complete affine flat manifold is virtually solvable. It is shown to be true in dimension $3$ by Goldman and Fried~\cite{fried1983three}. 

In the flat Riemannian setting and flat Lorentzian setting, which are particular affine geometries, results about the fundamental group of compact quotients have been already presented in Subsection~\ref{Subsection: Flat case}.

Now let us turn from the flat to the curved case, in particular to the Lorentzian one. We start by considering compact quotients of 
Cahen-Wallach spaces: a classification of fundamental groups of those quotients, up to finite index, is achieved in  \cite[Proposition 8.3]{KO}.

The latter family of symmetric spaces is contained in the class of locally homogeneous plane waves. Recall the definition of $G_\rho$ and $X_\rho$ from the previous subsection. Any compact locally homogeneous plane wave is complete and therefore (up to a finite cover) a Clifford-Klein form of a particular $(G_\rho, X_\rho)$-geometry. For a compact quotient of a general $X_\rho$ by a discrete subgroup $\Gamma \subset G_\rho$ we show the following:

\begin{theorem}\label{Introduction-Theorem: fundamental group}
The fundamental group $\Gamma$ of a compact quotient $\Gamma\backslash X_{\rho}$ is virtually nilpotent, or virtually a nilpotent extension of $\Z$ by a discrete subgroup of Heisenberg, i.e. virtually $\Z\ltimes \Gamma_0$ where $\Gamma_0\subset \Heis_{2n+1}$.
\end{theorem}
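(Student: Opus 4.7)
The plan is to use the (semi-)standardness results established earlier in the paper to reduce the question to that of analysing cocompact lattices in connected Lie subgroups of $G_\rho$, and then to decompose $\Gamma$ via the projection
\[
\tau : G_\rho = (\R\times K)\ltimes_\rho \Heis_{2n+1} \longrightarrow \R
\]
obtained by killing both the compact factor $K$ and the Heisenberg factor. First, I would invoke the theorems of Section \ref{Section: syndetic hull, standard semi-Standrad}: after passing to a finite index subgroup, $\Gamma$ is a cocompact lattice in a connected Lie subgroup $L\subset G_\rho$ acting properly cocompactly on $X_\rho$. Since $\tau(L)$ is a connected subgroup of $\R$, it is either $\{0\}$ or $\R$, and I would handle these two cases separately.

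\emph{Case $\tau(L)=\{0\}$.} Then $L\subset K\ltimes_\rho \Heis_{2n+1}$, a compact-by-nilpotent Lie group. The key sub-claim is that the projection of $\Gamma$ onto $K$ (through $L/L\cap\Heis_{2n+1}$, which embeds in $K$) is finite. This follows because the cocompactness of $\Gamma$ in $L$ descends to the quotient $L/(L\cap \Heis_{2n+1})$, where it forces the image of $\Gamma$ to be a closed, hence finite, subgroup of the compact Lie group in which it sits. Consequently a further finite index subgroup of $\Gamma$ lies inside $\Heis_{2n+1}$ itself, giving a finitely generated discrete nilpotent group, which is the first alternative of the statement.

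\emph{Case $\tau(L)=\R$.} Set $L_0 := L\cap (K\ltimes_\rho \Heis_{2n+1})$, a closed connected normal subgroup of $L$ with $L/L_0\cong \R$. Intersecting with $\Gamma$ yields the short exact sequence
\[
1 \longrightarrow \Gamma\cap L_0 \longrightarrow \Gamma \longrightarrow \tau(\Gamma) \longrightarrow 1,
\]
in which $\tau(\Gamma)\subset \R$ is a cocompact lattice, hence infinite cyclic, and $\Gamma\cap L_0$ is a cocompact lattice in $L_0$. Applying Case~1 to $L_0$ shows that, up to another finite index reduction, $\Gamma\cap L_0$ is a discrete subgroup $\Gamma_0\subset \Heis_{2n+1}$. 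Since extensions by $\Z$ split, we obtain $\Gamma$ virtually isomorphic to $\Z\ltimes \Gamma_0$, which is the second alternative.

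The main technical obstacle I anticipate is the finiteness of the $K$-projection in Case~1, together with the related need to verify that the various intersections appearing along the chain $\Heis_{2n+1}\subset K\ltimes_\rho \Heis_{2n+1}\subset L\subset G_\rho$ are actually cocompact lattices in the relevant connected subgroups. Both statements hinge on tracking cocompactness through quotients and intersections of closed subgroups in a non-semisimple Lie group, and this must be done without assuming any semisimplicity of $\rho$, as is available in the Cahen--Wallach case of \cite{KO}.
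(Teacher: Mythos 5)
Your proposal has two genuine gaps. The first is structural: you begin by invoking the standardness results of Section~\ref{Section: syndetic hull, standard semi-Standrad} to place a finite-index subgroup of $\Gamma$ as a cocompact lattice in a \emph{connected} subgroup $L \subset G_\rho$. But the paper only proves that compact quotients are standard \emph{or semi-standard} (Theorem~\ref{Introduction-Theorem: standarness and semi-standardness}), and genuinely non-standard straight quotients exist (Example~\ref{Example: non standard C-W}); for those, $\Gamma$ has no connected syndetic hull --- only $\Gamma_0 = \Gamma \cap \Heis$ does --- so your case division on $\tau(L)$ never gets off the ground. Moreover, the standardness results are themselves derived from the fundamental-group analysis: the proofs of Propositions~\ref{Proposition: syndetic hull for non-straight Gamma} and~\ref{Proposition: partial syndetic hull for straight Gamma} explicitly reuse the Zassenhaus-neighborhood argument from the proof of Theorem~\ref{Theorem Gamma}, so your route is circular relative to the paper's logic. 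The paper instead works directly with the discrete group: it splits into the straight/non-straight dichotomy according to whether $p(\Gamma)\subset\R$ is discrete, and in each case shows that a finite-index subgroup is generated by elements of a Zassenhaus neighborhood, using a dilation automorphism of $\Heis$ to shrink the Heisenberg components of a finite generating set.

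The second gap is in your Case 1. You claim the image of $\Gamma$ in $L/(L\cap\Heis)$, viewed inside $K$, is ``closed, hence finite.'' The image of a discrete (even cocompact) subgroup under a quotient homomorphism need not be closed or discrete --- it can be dense in a compact group --- and cocompactness of the image in a compact group is vacuous, so this step fails as stated. The paper needs genuine input here: virtual nilpotency of $\Gamma_0$ is obtained by the Zassenhaus argument applied to the projection $p_K$, while the finiteness of $p_K(\Gamma_0)$ --- which is what puts $\Gamma_0$ virtually inside $\Heis$ --- uses Selberg's lemma together with the Fried--Goldman--Hirsch theorem (Fact~\ref{fact2.7}) applied to the compact complete affine structure on the $K\ltimes\Heis$-leaves. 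Your proposal flags exactly this point as ``the main technical obstacle'' but does not supply an argument for it.
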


\subsection{\textbf{Standardness and semi-standardness of compact quotients}}

When $G$ preserves a Riemannian metric on $X=G/I$ (which is equivalent to the isotropy $I$ being compact), the discrete subgroups of $G$ acting properly and cocompactly on $X$ are exactly the uniform lattices of $G$. For general homogeneous spaces, the isotropy is not compact. Then the first source of examples of compact quotients are the standard quotients, see Definition \ref{Definition: standard}. In this case, $\Gamma$ is a uniform lattice in some connected Lie subgroup $N$ of $G$ acting properly and cocompactly on $X$. In particular, $N$ (which is necessarily closed in $G$) is a syndetic hull of $\Gamma$, see Definition \ref{Definition: syndetic hull}. For some $X=G/I$, all compact quotients are standard and can even be obtained by using the same $N$ (up to conjugacy) for all discrete subgroups $\Gamma\subset G$ that act properly and cocompactly on $G/I$ (up to taking a finite index group). This happens for instance in the flat Riemannian case, where by Bieberbach's theorem $N$ is the group of all translations. In general, not all quotients are standard, and for the standard ones, $N$ depends on $\Gamma$.
 Finding such an $N$ turns out to be an easier problem, and the existence of compact quotients reduces to an existence theorem for lattices.

As stated in Theorem \ref{Introduction-Theorem: flat case}, compact quotients in the flat Minkowski space are virtually standard \cite{goldman1984fundamental},  i.e., they become standard if we replace the discrete group $\Gamma$ by a suitable finite index group.  More generally, a theorem of Fried and Goldman \cite[Section 1.4]{fried1983three} states that any virtually solvable subgroup of the affine group, acting properly discontinuously on the affine space, has a syndetic hull (up to finite index). In simply connected nilpotent groups, the existence of a syndetic hull is due to Malcev \cite{mal1949class}, and is called the Malcev closure. Unlike the previous cases, homogeneous plane waves are not affine manifolds. However, they are foliated by codimension one affine leaves, which are the orbits of the affine unimodular lightlike group. 
The group $G_{\rho}$ may sometimes be solvable, but generically, it is a Lie group which is not even solvable. However, even in the solvable case, there is no construction analogous to the Malcev closure, and in this case we have non-standard examples \cite{maeta2022} (see also Section \ref{Section: Non Standard}). 
We prove the following theorem.
\begin{theorem}\label{Introduction-Theorem: standarness and semi-standardness}
	Any compact quotient of $X_{\rho}$ is  standard or semi-standard. In the standard case, there is a syndetic hull $N$ which is nilpotent or an extension of $\R$ by a subgroup of $\Heis$. Moreover, $N$ acts transitively.
\end{theorem}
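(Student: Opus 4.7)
The plan is to use the structural description of $\Gamma$ provided by Theorem~\ref{Introduction-Theorem: fundamental group} to construct, in each case, an explicit connected Lie subgroup $N\subset G_\rho$ containing $\Gamma$ cocompactly and of the required shape, and then to analyze when $N$ genuinely acts properly on $X_\rho$: this is where the dichotomy standard vs.\ semi-standard arises.

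First I would pass to a finite index subgroup of $\Gamma$ so as to be in one of the two alternatives of Theorem~\ref{Introduction-Theorem: fundamental group}: either $\Gamma$ is nilpotent, or $\Gamma\cong \Z\ltimes \Gamma_0$ with $\Gamma_0\subset \Heis_{2n+1}$ discrete. The subgroup $\Gamma\cap \Heis_{2n+1}$ is in both cases a nilpotent subgroup of the simply connected nilpotent group $\Heis_{2n+1}$, hence by Malcev's theorem admits a unique Malcev closure $N_0\subset \Heis_{2n+1}$ in which it is a cocompact lattice. In the nilpotent case, the image of $\Gamma$ in $G_\rho/\Heis_{2n+1}\cong \R\times K$ is nilpotent and thus, after a finite-index reduction using the compactness of $K$, lies in $\R$; combining $N_0$ with the corresponding one-parameter subgroup of $\R$ produces a candidate nilpotent $N$. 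In the second case, a generator $\gamma=(t_0,k_0,h_0)$ of the $\Z$-factor with $t_0\neq 0$ sits, after a finite-index reduction exploiting that $k_0$ lies on a one-parameter subgroup of the compact group $K$, on a canonical one-parameter subgroup of $(\R\times K)\ltimes_\rho \Heis_{2n+1}$; combining this with $N_0$ yields a connected $N$ of the form $\R\ltimes N_0$. In either case $\Gamma\subset N$ is cocompact by construction.

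The main obstacle is to show that this $N$ acts \emph{properly} on $X_\rho$: proper discontinuity of $\Gamma$ does not in general transfer to a syndetic hull, as the non-standard examples of \cite{maeta2022} recalled in Section~\ref{Section: Non Standard} illustrate. I would tackle this by a Cartan-projection argument in the style of Kobayashi's properness criterion, comparing the asymptotics of $\Gamma$ and $N$ inside $G_\rho$ relative to a decomposition of the form $G_\rho=I\cdot A\cdot I$. When these asymptotics match, properness passes from $\Gamma$ to $N$; standardness follows, and transitivity is then automatic by a dimension count, since $\Gamma\backslash N/(N\cap I)=\Gamma\backslash X_\rho$ must be compact of dimension $n+2$, forcing the orbit $N\cdot[I]$ to be open in $X_\rho$ and hence, by connectedness, all of $X_\rho$. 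When the asymptotics disagree, properness must fail, and this is precisely the semi-standard regime: I would show that the defect is localized in a controlled, essentially one-dimensional direction inside the Heisenberg factor, which gives the weaker semi-standard structure without altering the nilpotent-or-$\R$-extension-of-$\Heis$ shape of $N$. The subtlest point is arranging the successive finite-index passages and the Malcev closure so that the one-parameter thickening of the $\Z$-factor is compatible with $N_0$ on the Heisenberg side, i.e.\ so that $N$ indeed closes up to a bona fide Lie subgroup of $G_\rho$.
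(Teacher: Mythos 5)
Your proposal locates the difficulty in the wrong place, and this derails the argument. If $N$ is a closed connected subgroup containing $\Gamma$ with $\Gamma\backslash N$ compact, then properness of the $N$-action on $X_\rho$ is \emph{automatic} from properness of the $\Gamma$-action: writing $N=\Gamma C$ with $C\subset N$ compact, the set $\{n\in N: nK_1\cap K_2\neq\emptyset\}$ is contained in $\{\gamma\in\Gamma: \gamma (CK_1)\cap K_2\neq\emptyset\}\cdot C$, which is compact. So the Kobayashi-type Cartan-projection analysis addresses a non-issue, and your assertion that ``proper discontinuity of $\Gamma$ does not in general transfer to a syndetic hull'' is incorrect: the non-standard examples are ones where no syndetic hull \emph{exists}, not ones where a syndetic hull fails to act properly. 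The genuine obstacle --- which you defer to a closing sentence --- is precisely whether your candidate $\R\ltimes N_0$ closes up to a Lie subgroup containing $\Gamma$. In the straight case this requires the one-parameter group through $\hat\gamma$ to normalize the Malcev closure $N_0=Z\cdot A'$, i.e.\ that $A'$ be invariant under ${\bf L}+\phi$ (Proposition \ref{Proposition : semi/standard}), and Example \ref{Example: non standard C-W} shows this genuinely fails. Consequently your proposed dichotomy (``asymptotics match'' vs.\ not) does not reproduce the paper's: the actual dichotomy is indexed by discreteness of $p(\Gamma)$, with non-straight quotients always standard and straight quotients always semi-standard (the defect living in the $\R$-factor $G_\rho/(K\ltimes\Heis)$, not in a direction inside the Heisenberg factor).

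The construction itself also has gaps. In the non-straight case the image of $\Gamma$ in $\R\times K$ need not lie in $\R$ after passing to a finite-index subgroup (an infinite subgroup of $\SO(2)$ is nilpotent and survives any finite-index reduction), so ``combining $N_0$ with a one-parameter subgroup of $\R$'' does not produce a group containing $\Gamma$; the one-parameter factor must be allowed a nontrivial $K$-component, and even then containment of $\Gamma$ is not automatic. The paper sidesteps all of this by rerunning the Zassenhaus argument of Theorem \ref{Theorem Gamma} with a \emph{strong} Zassenhaus neighborhood (Fact \ref{Thurston: strongly Zassenhaus neighborhood}), using Heisenberg dilations to push generators into that neighborhood; this directly yields a closed connected nilpotent subgroup containing $\Gamma$ (resp.\ $\Gamma_0$) cocompactly. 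Transitivity then comes not from a bare dimension count but from the structural analysis of Proposition \ref{Proposition: transitive action along the Heis-leaves, without lattice} (which also pins down $N_0\subset\Heis$ and $Z\subset N_0$) combined with a fibration/homotopy argument, or from the cohomological-dimension argument of Proposition \ref{Proposition-Appendix A: transitive action under existence of a lattice}; your orbit-openness claim needs one of these inputs to be justified.
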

For the definition of semi-standard, see Definition \ref{Definition: semi-standard}. Note that in dimension three, all compact quotients are standard \cite[Theorem 12.4]{allout2022homogeneous}.
\subsection{More general homogeneous structures}
In the study of the fundamental group and standardness question, we did not make any use of the geometry of $X_{\rho}$. In particular, $G_{\rho}$ does not have to preserve any Lorentzian metric on $X_{\rho}$.
So Theorem \ref{Introduction-Theorem: fundamental group} extends to more general homogeneous structures (see Theorem \ref{Theorem: General homogeneous}). The property of being standard or semi-standard in Theorem \ref{Introduction-Theorem: standarness and semi-standardness} also remains true, the only thing we lose is the transitivity  of the action of the syndetic hull. 

Based on this remark, even if the initial motivation was to study the compact quotients of homogeneous plane waves, one can consider more general groups
$$G = (\R \times K) \ltimes \Heis, \ \  X = G/I,$$
with no restriction on the action, and the isotropy given by $I = C \ltimes A^+$, where $C$ is a subgroup of $K$ preserving $A^+$. A natural question in future work would be to see which of these homogeneous spaces are Lorentzian, or, more generally, which geometries on $X$ are preserved by $G$. 

\subsection{Equicontinuity of the parallel flow}
The isometry groups considered here are noncompact Lie groups.  Let $(M,g)$ be a compact Lorentzian manifold. A $1$-parameter group of $\Isom(M,g)$ is equicontinuous if its closure in $\Isom(M,g)$ is compact.  This is equivalent to the fact that it is isometric for  some Riemannian metric on $M$.
In a previous work \cite{mehidi2022completeness}, the third and fourth authors asked the question of the equicontinuity for the parallel flow of a compact Brinkmann manifold:
\begin{question}
Let $(M,g,V)$ be a compact Brinkmann spacetime. Is the flow $\phi^t$ of $V$ equicontinuous? 
\end{question}
The condition that  $\Isom(M,g)$ contains a one-parameter group which is not relatively compact in $\Isom(M,g)$ amounts to the non-compactness of the connected component $\Isom^{\circ}(M,g)$.  	

 The results of \cite{mehidi2022completeness} show that the flow $\phi^t$ of an arbitrary compact Brinkmann space is equicontinuous if it is equicontinuous for any compact locally homogeneous one.
We think it would be interesting to ask this question first in the case of plane waves. This would be an important step towards the general Brinkmann case. As a corollary of Theorem \ref{Introduction-Theorem: standarness and semi-standardness}, we obtain that the following:
	\begin{theorem}\label{Introduction-Theorem: equicontinuity of parallel flow}
	Let $(M,V)$ be a compact locally homogeneous plane wave. The action of the parallel flow $V$ is equicontinuous.
\end{theorem}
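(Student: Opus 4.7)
The parallel lightlike vector field $V$ generates the center $Z$ of the Heisenberg subgroup of $G_\rho$; because the $\rho$-action fixes $V$ pointwise, $Z$ sits in the center of the full group $G_\rho$. Hence the corresponding 1-parameter group $\{z_t\}\subset G_\rho$ commutes with every element of $\Gamma$ and descends to a 1-parameter subgroup $\{\bar z_t\}$ in the center of $\Isom^\circ(M,g)$. Equicontinuity of $\{\bar z_t\}$ on the compact manifold $M$ is equivalent to the existence of a Riemannian metric on $M$ preserved by the flow, and the plan is to produce such a metric by descending a left-invariant metric from a suitable syndetic hull.

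Apply Theorem \ref{Introduction-Theorem: standarness and semi-standardness}: up to passing to a finite cover, $M$ is standard, so $\Gamma$ is a uniform lattice in a closed connected subgroup $N\subset G_\rho$ (either nilpotent, or an extension of $\R$ by a subgroup of $\Heis$) acting transitively on $X_\rho$ with compact isotropy $I':=N\cap I$. Thus $M\cong\Gamma\backslash N/I'$ is a principal $I'$-bundle over the compact homogeneous space $\Gamma\backslash N$. A key dichotomy arises from the position of $Z$ relative to $N$: either $Z\subset N$, in which case $\Gamma\cap Z$ is necessarily a uniform lattice in $Z\cong\R$ (by the Mal'cev-type principle that closed central subgroups of the ambient group of a nilpotent lattice meet the lattice cocompactly, with the analogous statement for the solvable case (b) of Theorem \ref{Introduction-Theorem: standarness and semi-standardness}), so the parallel flow $\{\bar z_t\}$ is periodic on $M$ and trivially equicontinuous; or $Z\not\subset N$, and a further argument is needed.

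Assuming $Z\subset N$ (or working in the enlarged setting $N\cdot Z$), choose any $\Ad(I')$-invariant inner product on $\n$, which exists by averaging over the compact group $I'$. It extends to a left-$N$-invariant, right-$I'$-invariant Riemannian metric $h$ on $N$, which descends to a Riemannian metric $\bar h$ on $M$. For every $z\in Z\subset N$, left translation $L_z$ on $N$ preserves $h$ (automatically, since $h$ is left-invariant) and commutes with the left $\Gamma$-action and the right $I'$-action (since $z$ is central in $G_\rho$), so $L_z$ descends to an isometry of $(M,\bar h)$. Unwinding the identification $X_\rho\cong N/I'$ shows that this descended $Z$-action coincides with the parallel flow, so $\{\bar z_t\}\subset\Isom(M,\bar h)$, which proves equicontinuity. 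The semi-standard case is handled by a parallel argument using the corresponding semi-standard syndetic hull in place of $N$.

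The principal technical obstacle is the case $Z\not\subset N$. The naive enlargement to $N\cdot Z$ preserves the structural form of $N$ (since $Z$ is central) and keeps $(N\cdot Z)\cap I$ compact (because $Z\cap I=\{e\}$), but $\Gamma$ generally fails to be cocompact in $N\cdot Z$ when $Z\cap N$ is trivial, and enlarging $\Gamma$ by an element of $Z$ is forbidden when the flow is effective (it would give an infinite-order action on the compact quotient). Resolving this case requires either exploiting the explicit structural description in Theorem \ref{Introduction-Theorem: standarness and semi-standardness} to reduce it to the favorable situation, or a direct argument using that the closure of $\{\bar z_t\}$ in $\Isom(M,g)$ lies in the abelian center of $\Isom^\circ(M,g)$ together with the rigidity of the centralizer of $\Gamma$ in $G_\rho$.
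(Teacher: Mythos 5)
There is a genuine gap. Your argument is complete only under the hypothesis $Z\subset N$, and you explicitly leave the case $Z\not\subset N$ unresolved, calling it the ``principal technical obstacle.'' But this is precisely the point the paper's proof of Theorem \ref{Theorem: equicontinuity} rests on: the syndetic hull always contains the center of $\Heis$. This is not visible from the statement of Theorem \ref{Introduction-Theorem: standarness and semi-standardness} alone, but it is the content of Proposition \ref{Proposition: transitive action along the Heis-leaves, without lattice} (and hence of Theorems \ref{Theorem: standarness in non-straight case} and \ref{Theorem: standarness in straight case}): a connected nilpotent subgroup of $K\ltimes\Heis$ acting cocompactly on a leaf is forced into $\Heis$, acts transitively, and must contain $Z$ (either because $[N,N]=Z$, or because an abelian subgroup of $\Heis$ of dimension $n+1$ automatically contains the center). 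Once $Z\subset N$ (resp.\ $Z\subset N_0$ in the straight case), the paper concludes directly: $\overline{\Gamma' Z}$ is contained in the closed group $\Z\ltimes N_0$ (resp.\ $N$), whose quotient by $\Gamma'$ is compact, so $\pi(Z)$ is relatively compact; one then averages over the finite quotient $\Gamma'\backslash\Gamma$. Your metric-descent construction for the case $Z\subset N$ is a correct variant of this, but without establishing $Z\subset N$ the proof does not close.

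Two further points. First, your claim that $Z\subset N$ forces $\Gamma\cap Z$ to be a uniform lattice in $Z$, hence periodicity of the flow, is false: a closed connected central subgroup of a nilpotent Lie group need not be rational with respect to a given lattice, and the paper's Appendix \ref{Appendix B} exhibits exactly such a compact quotient with non-periodic (yet equicontinuous) parallel flow. This does not damage your proof, since your metric argument does not use periodicity, but the claimed dichotomy is misleading. Second, your opening assertion that $Z$ is central in all of $G_\rho$ is only true for the first family of homogeneous plane waves; in the second family the $\R$-factor acts non-trivially on $Z$. What you actually need --- and what does hold, because $V$ descends to $M$ by hypothesis --- is only that $\Gamma$ centralizes $Z$, which is how the paper phrases it.
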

\textcolor{black}{In the locally symmetric indecomposable case (Cahen-Wallach spaces), the flow is proved to be periodic \cite[Proposition 8.2]{KO}}. In the homogeneous general plane wave case, there are non-periodic examples (Appendix \ref{Appendix B}).

\subsection*{Organization of the article} The article is organized as follows: in Section \ref{Section: Flat case}, we prove existence of parallel vector fields for any flat compact Lorentzian manifold. In Section \ref{Section: Preliminary facts} we give a description of the isometry group of a non-flat simply connected homogeneous plane wave. Section \ref{Section: Fundamental group} deals with the fundamental group of compact quotients of such manifolds, where we prove Theorem \ref{Introduction-Theorem: fundamental group}. In Section \ref{Section: syndetic hull, standard semi-Standrad}, we prove Theorem \ref{Introduction-Theorem: standarness and semi-standardness} about standardness and semi-standardness of compact quotients, and we consider more general homogeneous structures in Section \ref{Section: More homogeneous structures}. This allows to prove equicontinuity of the parallel flow in Section \ref{Section: Equicontinuity} (see also Appendix \ref{Appendix B} for an example of a non-periodic action). Appendix \ref{Appendix A} is related with Section \ref{Section: syndetic hull, standard semi-Standrad}: we show that a cocompact proper action on a contractible space of any connected Lie group admitting a torsion free uniform lattice is transitive. In Section \ref{Section: Non Standard}, we study the non-standard phenomena, starting with a concrete example of a non-standard compact quotient.
\subsection*{Acknowledgment}
We thank the referee for the valuable comments and suggestions that helped improve the quality of the presentation.

\section{Flat case}\label{Section: Flat case}

This section is devoted to  the proof of the first part of Theorem  \ref{Introduction-Theorem: parallel fields}, namely, the existence of a parallel vector field and parallel lightlike direction on any compact flat Lorentzian manifold. For the proof of the equicontinuity statement, see Section~\ref{Section: Equicontinuity}.

Let $M$ be a compact flat Lorentzian manifold of dimension $n+1$, and $\Gamma$ its fundamental group. By item (3) of Theorem \ref{Introduction-Theorem: flat case} we have $M=\Gamma \backslash L$  up to a finite covering, where $L$ is a solvable connected subgroup of the  Poincar\'e group $\O(1, n) \ltimes \R^{n+1}$ that acts simply transitively on $\R^{n+1}$ and $\Gamma$ is a lattice in $L$. Let $\pi: \O(1, n) \ltimes \R^{1+n} \to \O(1, n)$ be the linear part projection, and let $L^\prime$ be the projection of $L$.
Recall that by a direction we mean an oriented line, i.e. an oriented one-dimensional linear subspace. Observe that $M$ has a parallel vector field (resp. a parallel  field of lightlike directions) if and only if $L^\prime$ preserves some vector $v\not=0$ (resp. a lightlike direction $l$ in $\R^{1+n}$). In \cite{grunewald1988transitive},  Grunewald and Margulis give a precise description of all simply transitive groups of affine Lorentz motions on $\R^{1+n}$ using theorems by Auslander. Propositions 5.1 and 5.3 in their paper ensure the existence of $v$ and $l$ for all such groups. In our situation, $L$ is unimodular. We will use this additional information to give a simpler and more direct proof of the existence of $v$ and $l$.

\bigskip

 Before we start, let us introduce some notation.
Let $\mathsf{Pol}$ be the subgroup of  elements of $ \O(1, n) \ltimes \R^{1+n}$  whose linear part preserves a fixed lightlike direction. It has the form $\mathsf{Pol} =  L(\mathsf{Pol}) \ltimes \R^{1+n}$, where the linear part $L(\mathsf{Pol})$ is the group $\Sim_{n-1}$ of similarities 
of $\R^{n-1}$. This is a semi-direct product $L(\mathsf{Pol}) =(\R \times \O(n-1)) \ltimes \R^{n-1}$, whose radical $ \R \ltimes \R^{n-1}$ is the group of affine homotheties of $\R^{n-1}$.  The linear part of $\mathsf{Pol}$ is equal to
$$L(\mathsf{Pol})=\left\{\begin{pmatrix}
    e^{\alpha} & \beta^{\top} & -\frac{\vert \beta \vert^2}{2}\\
0 & A &  - A\beta\\
0 & 0 & e^{-\alpha}
\end{pmatrix} |\;\; \alpha \in \R, A \in \O(n-1), \beta \in \R^{n-1}\right\}.$$
\begin{definition}\label{Definition: Pol and SPol}
 $\mathsf{(1)}$   The group  $\mathsf{Pol}:=((\R \times \O(n-1)) \ltimes \R^{n-1}) \ltimes \R^{1+n}$ above will be referred to as the  \textit{polarized Poincar\'e} group. 
   
  $ \mathsf{(2)}$   The subgroup  $\mathsf{SPol} :=(\O(n-1) \ltimes \R^{n-1})\ltimes \R^{n+1}$ of  elements of $\mathsf{Pol}$  whose linear part preserves a lightlike vector will be referred to as the special polarized Poincar\'e group.  
\end{definition}

\noindent We introduce the following notations, which we will keep throughout this section: 
\begin{itemize}
  
    \item[$\bullet$]  $H:=\{{\rm diag}(e^t, I_{n-1}, e^{-t})\mid t\in\R\}$

    \item[$\bullet$] $K:= \O(n-1)$
    \item[$\bullet$] $U:= \R^{n-1} \subset L(\mathsf{Pol})$ the unipotent radical of $L(\mathsf{Pol})$. 
    \item[$\bullet$] $T:=\R^{n+1}$ the translation part of $\mathsf{Pol}$.
\end{itemize}
 
\noindent Then $\mathsf{Pol}= ((H \times K) \ltimes U) \ltimes T$. 

 In the proof of Theorem \ref{Theorem: solvable implies invariant vector} we will need the following fact on a $1$-parameter subgroup $g^t$ of $\mathsf{Pol}$ which has non-trivial projection to $H$. Let $g^t= h^t  k^t u^t$ be its Jordan decomposition, where $h^t$ the hyperbolic part, $k^t$ the elliptic part, $u^t$ the unipotent part, are $1$-parameter groups in $\mathsf{Pol}$. Up to conjugacy in $\mathsf{Pol}$, we can assume that $h^t$  is in $H$, and since $k^t$ and $h^t$ commute, $k^t$ is then a $1$-parameter group in $K$. Define $\mathfrak{p}:= \Span(W, Z)$ the timelike $2$-plane in $T=\R^{n+1}$ where $h^t$ acts by a hyperbolic matrix. Since $U \ltimes \R^{n+1}$ is normal, the unipotent part $u^t$ is always there, and since $u^t$ and $h^t$ commute, it is  contained in $\mathfrak{p}^\perp \subset \R^{n+1}$. 
\begin{theorem}\label{Theorem section flat: parallel fields}
Up to a finite cover, any compact flat Lorentzian manifold  $M$ admits a parallel vector field, and a parallel field of lightlike directions. 
\end{theorem}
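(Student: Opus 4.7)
The plan is to apply Theorem~\ref{Introduction-Theorem: flat case}(3) and then run a case analysis on the position of $L$ inside $\mathsf{Pol}$. Up to a finite cover, $M=\Gamma\backslash L$ where $L\subset\Isom(\Mink^{1,n})$ is a connected solvable subgroup acting simply transitively on $\R^{n+1}$ and $\Gamma\subset L$ is a uniform lattice; in particular $L$ is unimodular. Parallel fields on $\R^{n+1}$ are constant, so the theorem is equivalent to showing that the linear part $L':=\pi(L)$ fixes both a non-zero vector and a lightlike direction in $\R^{n+1}$.

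First, I would show that $L'$ is conjugate into $L(\mathsf{Pol})$. As $L'$ is connected solvable and $L(\mathsf{Pol})$ is the minimal parabolic of the real rank-one reductive group $\O(1,n)$, Borel's fixed-point theorem applied to the action of $L'$ on the celestial sphere $\O(1,n)/L(\mathsf{Pol})\cong S^{n-1}$ produces a fixed lightlike direction, giving the required parallel lightlike direction. After conjugating we may assume $L\subset\mathsf{Pol}$ and that the fixed direction is $\R W$.

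For the parallel vector, I would split on the image of $\pi_H\colon L\to H$. If $\pi_H(L)=\{e\}$, then $L\subset\mathsf{SPol}$ and by Definition~\ref{Definition: Pol and SPol} the lightlike vector $W$ is itself fixed by $L'$. Otherwise, pick a one-parameter subgroup $g^t\subset L$ with non-trivial $H$-projection and use the Jordan decomposition $g^t=h^tk^tu^t$ recalled just before the theorem; after conjugation in $\mathsf{Pol}$ we have $h^t\in H$ while $u^t$ is a pure translation in the spacelike plane $\mathfrak{p}^\perp$. Denote the Lie algebra of $L$ by $\l$ and set $\l_N:=\l\cap(\u+\R^{n+1})$. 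The adjoint action of $h^t$ preserves $\l_N$ and splits it into $h^t$-weight spaces $\l_N^{+1}\subset \u+\R W$, $\l_N^0\subset\mathfrak{p}^\perp$, $\l_N^{-1}\subset\R Z$; since $h^t$ acts trivially on $\h\oplus\k\supset \l/\l_N$, the unimodularity of $L$ reduces to the weight-balance equation $\dim\l_N^{+1}=\dim\l_N^{-1}$. I would then combine this with simple transitivity -- which forces the translation projection $v\colon\l\to\R^{n+1}$ to be a linear isomorphism -- and with the closure of $\l$ under brackets in $\mathsf{pol}$: a careful dimension count rules out all configurations in which the $K$- and $U$-parts of $L'$ would have no common non-zero fixed vector in the $H$-fixed plane $\mathfrak{p}^\perp$, producing the desired $L'$-invariant vector (in the degenerate subcases where $\l_N^{+1}\subset\R W$, the vector $W$ itself already works).

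The main obstacle lies in this last subcase: the unimodularity identity alone is a single linear condition on weight-space dimensions, so producing a genuine $L'$-invariant vector requires extracting additional information from simple transitivity and the bracket relations in $\mathsf{pol}$ in order to exclude ``rotational'' $K_L$-configurations on $\mathfrak{p}^\perp\cong\R^{n-1}$ that would leave no fixed vector compatible with the $U$-part. Step~1 (placing $L$ in $\mathsf{Pol}$) and the trivial subcase of Step~2 are by contrast straightforward.
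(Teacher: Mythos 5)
There are two genuine gaps. The first is the appeal to Borel's fixed-point theorem in your Step~1: that theorem holds for connected solvable algebraic groups acting on complete varieties over an \emph{algebraically closed} field, and it fails over $\R$ --- the standard counterexample being $\SO(2)$ acting on $\P^1(\R)$, which sits inside $\O(1,2)$ as the rotations of a spacelike plane. Concretely, take $L\cong\widetilde{\Euc}_2\subset\Isom(\Mink^{1,2})$ generated by the spacelike translations and a screw motion whose translation part is along the timelike axis of rotation: this is a connected, solvable, unimodular group acting simply transitively, yet its linear part fixes no lightlike direction and hence cannot be conjugated into $\mathsf{Pol}$. So both your claimed $L'$-fixed lightlike direction and your reduction ``$L\subset\mathsf{Pol}$'' break down. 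What is true (and what the paper uses, via Lie's theorem) is that $L'$ preserves \emph{some} direction, possibly timelike or spacelike; in the timelike case the parallel lightlike direction field is recovered only after passing to a finite cover, because Bieberbach's theorem then makes $\Gamma$ virtually a group of translations --- the fixed direction lives on the cover, not at the level of $L'$. Your plan has no mechanism for this case.

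The second gap is that the heart of the vector-field statement --- producing an $L'$-fixed vector when $\pi_H(L)$ is non-trivial --- is left as ``a careful dimension count rules out all configurations,'' which you yourself flag as the main obstacle. The unimodularity constraint $\dim\l_N^{+1}=\dim\l_N^{-1}$ is indeed only one linear relation and does not suffice. The missing ingredients are: (i) showing that $\mathfrak p=\Span(W,Z)$ is entirely contained in $\l$ (transitivity puts $V+W$ in $\l$ for some $V$ in the nonnegative-weight part, forcing $W\in\l$; unimodularity then forces a $Z$-line, and simple transitivity kills its $\u$-component); (ii) concluding that $L$ has no $U$-component at all, so $L\subset (H\times K)\ltimes T$; and (iii) splitting $L=Q\ltimes P$ with $Q$ the centralizer of $h^t$, observing that $Q$ acts simply transitively on $\mathfrak p^\perp\cong\R^{n-1}$, and invoking the Euclidean structure result (Lemma~\ref{Lemma: solvable in O(n)xR^n}, due to Grunewald--Margulis) to extract a spacelike vector fixed by the linear part. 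Without step (iii) in particular, the ``rotational configurations on $\mathfrak p^\perp$'' you worry about are not excluded by the ingredients you list.
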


The first part of Theorem \ref{Theorem section flat: parallel fields} follows from Theorem \ref{Theorem: solvable implies invariant vector} below (which is stronger since we don't assume that $L$ has a lattice). 
\begin{theorem}\label{Theorem: solvable implies invariant vector}
Let $L$ be a unimodular solvable Lie  subgroup of $\O(1, n) \ltimes \R^{n+1}$ acting simply transitively on $\R^{n+1}$. Then $L$ preserves some vector field on $\R^{n+1}$.
\end{theorem}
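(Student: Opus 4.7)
The plan is to reduce the theorem to the assertion that the linear part $L':=\pi(L)\subset \O(1,n)$ fixes a nonzero vector $v\in\R^{n+1}$: indeed, the constant vector field defined by such a $v$ is then parallel and $L$-invariant, which is what we need. After replacing $L$ by its identity component $L^\circ$ (which still acts simply transitively on the connected space $\R^{n+1}$) I may assume $L$ is connected and $L'\subset\O(1,n)^\circ$ a connected solvable subgroup.

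The first input is the rank-one dichotomy for connected amenable subgroups of $\O(1,n)^\circ$ acting on hyperbolic space $\H^n$: either $L'$ fixes a point of $\H^n$ (hence is conjugate into $\SO(n)$ and fixes a timelike vector, finishing the proof) or $L'$ fixes a point at infinity, preserving a lightlike direction, so that $L'$ is conjugate into $L(\mathsf{Pol})$. In this second case, connectedness of $L'$ forces the projection $L'\to H$ to be either trivial or surjective. If trivial, then $L'\subset K\ltimes U$, and reading off the explicit matrix form with $\alpha=0$ in Definition \ref{Definition: Pol and SPol} shows that $K\ltimes U$ fixes the lightlike vector $W=(1,0,\ldots,0)^\top$, again finishing the proof.

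The heart of the matter is the remaining case, where $L'$ surjects onto $H$. Choose $X\in\l$ with nonzero hyperbolic component $X_h=\alpha X_h^0$, $\alpha\ne 0$, and use the Jordan decomposition of $g^t:=\exp(tX)$ recalled just before the statement: $g^t=h^tk^tu^t$ with $h^t\in H$, $k^t\in K$, and $u^t$ in the unipotent radical of $\mathsf{Pol}$, whose translation part lies in $\mathfrak{p}^\perp$. Any $L'$-fixed vector must in particular be fixed by $h^t$, whose $1$-eigenspace is exactly $\mathfrak{p}^\perp$, so the task becomes to produce a common $L'$-fixed vector inside $\mathfrak{p}^\perp$. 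Unimodularity of $L$ enters as follows: the subspace $V:=\l\cap\t$ is an ideal of $\l$ (since $\t$ is an ideal of the Lie algebra of $\mathsf{Pol}$), and the block-triangular form of $\ad_Y|_\l$ associated with $0\to V\to\l\to\l'\to 0$ yields, for every $Y\in\l$, the trace identity
$$0=\tr(\ad_Y|_\l)=\tr(\ad_{Y'}|_{\l'})+\tr(Y'|_V),$$
where $Y'$ is the image of $Y$ in $\l'$ acting on $V\subset\t$ via the ambient $\O(1,n)$-representation. Applied to $Y=X$ and computed explicitly (the $K$- and $U$-contributions vanish, being skew and nilpotent respectively, leaving only the $\pm\alpha$-weight pieces), the identity constrains the ideal $\l'\cap\u$ to be trivial and pins down which extremal lightlike directions $\R W,\R Z$ lie in $V$; combined with the $L'$-invariance of $V\cap\mathfrak{p}^\perp$ and the simply-transitive dimension identity $\dim V+\dim L'=n+1$, the residual compact and unipotent components of $L'$ acting on $\mathfrak{p}^\perp$ must admit a common nonzero fixed vector, which is the sought-for $v$.

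The main obstacle will be precisely this final step: converting the scalar trace balance from unimodularity, together with the simply-transitive constraint, into a genuine common fixed vector for the full, possibly non-abelian $L'$-action on $\mathfrak{p}^\perp$, rather than for the single chosen element $X$. The Jordan-decomposition hint, which locates the translation part of $u^t$ inside $\mathfrak{p}^\perp$, is the key structural input that decouples the hyperbolic $2$-plane $\mathfrak{p}$ from its transversal and reduces the remaining work to elementary linear algebra in $\mathfrak{p}^\perp$.
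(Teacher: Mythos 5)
Your overall skeleton matches the paper's: reduce to finding a vector fixed by the linear part $L'$, split according to whether $L'$ is elliptic or fixes a point at infinity (the paper obtains the same dichotomy from Lie's theorem), dispose of the case where $L'$ has trivial projection to $H$, and in the remaining case use the Jordan decomposition $g^t=h^tk^tu^t$ to localize any candidate fixed vector inside the $1$-eigenspace $\mathfrak{p}^\perp$ of $h^t$. Up to that point the argument is sound.

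The gap is exactly where you flag it, and it is not a technicality. A trace identity coming from unimodularity is a single scalar constraint per element of $\mathfrak{l}$; it can control the eigenvalues of $\mathsf{Ad}_{h^t}$ on $\mathfrak{l}$ (this is essentially how the paper shows $\mathfrak{p}\subset\mathfrak{l}$ and that $\mathfrak{l}$ has no $\mathfrak{u}$-component), but it cannot by itself produce a common nonzero fixed vector for the residual action of $L'$ on $\mathfrak{p}^\perp\cong\R^{n-1}$: a compact group such as $\SO(n-1)$ acts on $\R^{n-1}$ with no trace obstruction and no nonzero fixed vector. What is missing is the mechanism converting \emph{simple transitivity} into a fixed vector. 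The paper supplies it by forming $Q=\{q\in L\mid h^tqh^{-t}=q\}$, proving $L=Q\ltimes P$ with $P$ a group of pure translations tangent to $\mathfrak{p}$, deducing that $Q$ (hence its projection to $\O(n-1)\ltimes \R^{n-1}$) acts simply transitively on $\mathfrak{p}^\perp$, and then invoking Lemma \ref{Lemma: solvable in O(n)xR^n} (Grunewald--Margulis): a subgroup of $\O(m)\ltimes\R^m$ acting simply transitively on $\R^m$ is generated by its pure translations together with a graph over their orthogonal complement, so its linear part fixes a vector. Since the linear parts of $L$ and $Q$ coincide, this yields the desired spacelike fixed vector in $\mathfrak{p}^\perp$. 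Without this lemma, or an equivalent structural statement about simply transitive subgroups of the Euclidean group, your assertion that ``the residual compact and unipotent components must admit a common nonzero fixed vector'' is unsupported, so the proof is incomplete at its decisive step.
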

The following lemma will be used in the proof of Theorem \ref{Theorem: solvable implies invariant vector}. 

\begin{lemma}\label{Lemma: solvable in O(n)xR^n}\cite[Propositions 4.2, 4.3] {grunewald1988transitive}
Let $S$ be a subgroup of $\O(n) \ltimes \R^n$. 
Then $S$ acts simply transitively on $\R^n$ if and only if it is generated by pure translations $E:=S \cap \R^n$ and a graph $\phi: E^\perp \to \O(E) \times E^\perp, \phi(v)=(k_v,v)$. In particular, the linear part of $S$ fixes some vector in $\R^n$. 
\end{lemma}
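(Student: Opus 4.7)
The plan is to exploit the fact that simple transitivity of $S$ on $\R^n$ equips $S$ with the structure of a Lie group carrying a flat left-invariant Riemannian metric, obtained by pulling back the Euclidean metric via the orbit map $s \mapsto s \cdot 0$. First I would observe that the orbit map being a bijection forces every element of $S$ to be written uniquely as $(k_v, v)$ with translation part $v$, so that $E = S \cap \R^n = \{v : k_v = I\}$ is a closed normal subgroup. At the Lie algebra level, $\mathfrak{s} \subset \mathfrak{o}(n) \oplus \R^n$ projects isomorphically onto $\R^n$, so it is the graph of a unique linear map $\psi: \R^n \to \mathfrak{o}(n)$, $\psi(v) = A_v$, and the subalgebra $\mathfrak{e}$ corresponding to $E$ is precisely $\ker \psi$.

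The heart of the argument is Milnor's structure theorem for Lie algebras admitting a flat left-invariant metric: there is an orthogonal splitting $\mathfrak{s} = \mathfrak{e} \oplus \mathfrak{b}$ with $\mathfrak{e}$ an abelian ideal, $\mathfrak{b}$ an abelian subalgebra, and $\ad_X$ skew-symmetric on $\mathfrak{e}$ for all $X \in \mathfrak{b}$. Under the graph identification, $\mathfrak{b}$ corresponds to $E^\perp$. Unwinding the bracket in $\mathfrak{o}(n) \oplus \R^n$, the abelian condition on $\mathfrak{b}$ reads $A_v v' = A_{v'} v$ for $v, v' \in E^\perp$, while $A_v \in \mathfrak{o}(n)$ gives $\langle A_v x, y \rangle = - \langle x, A_v y \rangle$. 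A short alternating argument, combining the $(v, v')$-symmetry with the skew-symmetry in the two arguments of $\langle \cdot, \cdot \rangle$, forces $\langle A_v v', v'' \rangle = 0$ for all $v, v', v'' \in E^\perp$, i.e.\ $A_v$ vanishes on $E^\perp$. Exponentiating, the unique element of $S$ with translation part $v \in E^\perp$ has linear part $k_v = \exp A_v$ acting trivially on $E^\perp$, so $k_v \in \O(E)$, giving the graph $\phi$.

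The converse direction is a direct verification: a group generated by $E$ and such a graph acts simply transitively at $0$ by construction, and hence on all of $\R^n$ because $E$ and the image of $\phi$ cover all translation parts. The ``in particular'' statement then follows: if $E \neq \R^n$, any nonzero vector of $E^\perp$ is fixed by every generator of the linear part and hence by all of $L(S)$, while if $E = \R^n$ then $L(S) = \{I\}$ fixes everything. The main difficulty lies in the triviality of $A_v$ on $E^\perp$ for $v \in E^\perp$; invoking Milnor is the cleanest route, but it could equivalently be extracted from a direct computation of the Levi-Civita connection and the flat curvature tensor of the left-invariant metric on $S$.
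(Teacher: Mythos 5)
First, note that the paper does not prove this lemma at all: it is quoted directly from Grunewald--Margulis \cite[Propositions 4.2, 4.3]{grunewald1988transitive}, so there is no internal proof to compare against and your argument has to stand on its own. Your route via Milnor's structure theorem for flat left-invariant metrics is a legitimate and conceptually clean one, and it is essentially the ``right'' framework: if $\mathfrak{s}=\{(A_v,v):v\in\R^n\}$ is the graph of $\psi:v\mapsto A_v$, the condition that $\mathfrak{s}$ be a subalgebra, $[A_v,A_w]=A_{A_vw-A_ww}$ corrected to $[A_v,A_w]=A_{A_vw-A_wv}$, says exactly that $\nabla_vw:=A_vw$ is a flat torsion-free metric connection, i.e.\ that the pulled-back metric on $S$ is flat with $\nabla_X=A_v$.

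There are, however, two genuine gaps in the write-up. (i) Milnor's theorem is existential: it produces \emph{some} orthogonal splitting $\mathfrak{s}=\mathfrak{b}\oplus\mathfrak{u}$, and you silently identify the abelian ideal $\mathfrak{u}$ with $\mathfrak{e}=\ker\psi$. This identification is true, but only because in Milnor's construction $\mathfrak{u}=\ker(X\mapsto\nabla_X)$ and here $\nabla_X=A_v$, so $\ker\nabla=\ker\psi$; that computation (which you defer to a remark) is actually the crux and must be carried out, otherwise $\mathfrak{b}$ need not be the set of elements with translation part in $E^\perp$ and the abelian relation $A_vv'=A_{v'}v$ on $E^\perp$ is unavailable. (ii) The step ``$\langle A_vv',v''\rangle=0$ for all $v,v',v''\in E^\perp$, i.e.\ $A_v$ vanishes on $E^\perp$'' is a non sequitur as written: the symmetric/antisymmetric trilinear argument only gives $A_v(E^\perp)\perp E^\perp$, that is $A_v(E^\perp)\subseteq E$. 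To conclude you also need $A_v(E)\subseteq E$ --- which is free from the graph condition, since $[(A_v,v),(0,u)]=(0,A_vu)\in\mathfrak{s}$ has trivial linear part and hence lies in $\ker\psi$ --- whence $A_v(E^\perp)\subseteq E^\perp$ by skew-symmetry and finally $A_v(E^\perp)\subseteq E\cap E^\perp=0$. Two smaller caveats: your argument really concerns the connected translation subgroup $\exp(\ker\psi)$ rather than $E:=S\cap\R^n$, which can be strictly larger and disconnected (a screw-motion group with $e^{A_v}=\mathrm{id}$ for some $v\neq0$ gives $S\cap\R^n\cong\R^{n-1}\times 2\pi\Z$); and in the converse direction the generated group acts freely only if $v\mapsto k_v$ is a homomorphism. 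Both are imprecisions already present in the quoted statement, and neither affects the ``in particular'' clause, which is the only part the paper uses.
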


\begin{proof}[Proof of Theorem \ref{Theorem: solvable implies invariant vector}]

    \textsc{Step 1: existence of an invariant line:}  We know that $L$ is solvable, 
   hence $L^\prime \subset \O(1, n)$ is also solvable.  By Lie's theorem, $L^\prime$ preserves a line or a 2-plane. In the first case, $L'$ even preserves a direction $l$ since $L'$ is connected.
   If $L^\prime$ preserves a 2-plane $p$, and $p$ is Lorentzian or 
   degenerate, then it has two or one lightlike direction, which will be $L^\prime$-invariant. If $p$ is spacelike, then we consider the $L^\prime$-action on $p^\perp$ which is Lorentzian. We repeat the process, and surely arrive to either some $2$-plane which is not spacelike, or to a line.
   In all cases, $L^\prime$ preserves a direction $l$. \\\\
\textsc{Step 2: existence of an invariant vector:} If $l$ is not lightlike, then $L^\prime$ preserves a unit vector on it. It  then remains to consider the case where  $l$ is lightlike.  
If $ L$ is contained in $\mathsf{SPol}$, then {\blue $L'$} obviously preserves a lightlike vector.
Henceforth, we assume that $ L$ is not contained in $\mathsf{SPol}$. Then (up to conjugacy in $\mathsf{Pol}$) $L$ contains a $1$-parameter group $g^t=h^t k^t u^t$, with $h^t $ in $H$. 
The adjoint action of $h^t$ on the Lie algebra of $\mathsf{Pol}$ is as follows: 
\begin{itemize}
    \item[$\bullet$]  $\Ad_{h^t}(W)=e^{-t} W, \;\;\Ad_{h^t}(Z) = e^t Z$, where $\mathfrak{p}:= \Span(W, Z)$.  
    \item[$\bullet$] $\Ad_{h^t}(X)=e^t X$, for any $X \in \mathfrak{u}$.
    \item[$\bullet$] $\Ad_{h^t}(Y)=Y$, for any $Y \in \mathfrak{p}^{\perp}$.
    \item[$\bullet$] $\Ad_{h^t}(B)=B$ for any $B \in \mathfrak{k}$.
\end{itemize}
We have that $\Ad_{g^t}$ preserves $\mathfrak{l}:=\mathrm{Lie}(L)$.  Since $h^tk^t$ is the semisimple part of the Jordan decomposition of $g^t$, $\Ad _{h^tk^t}$ also preserves $\mathfrak{l}$, which implies $\Ad_{h^t}(\mathfrak{l})=\mathfrak{l}$. Moreover, since $L$ is unimodular, the adjoint action  of $h^t$ restricted to $\mathfrak{l}$ is unimodular. This gives restrictions on the possible eigenvectors of $\Ad_{h^t}$ inside $\mathfrak{l}$, and hence allows to have some precise information on $L$, which we unroll as follows:   

1. $\mathfrak{p}=\Span(W,Z)$ is contained in $\mathfrak{l}$.

\noindent  Indeed, $L$ acts transitively on $\R^{n+1}$, in particular $L(0)=\R^{n+1}$. Hence there exists an element $V\in(\mathfrak{h}\oplus\mathfrak{k})\ltimes\mathfrak{u}$ such that $V+W$ is in $\mathfrak{l}$. On the other hand, $\mathfrak{l}$ decomposes into eigenspaces of $\Ad_{h^t}$. So we can write $V+W$ as a linear combination of eigenvectors. Since $V$ belongs to $(\mathfrak{h}\oplus\mathfrak{k})\ltimes\mathfrak{u}$, where $\Ad_{h^t}$ has only eigenvalues $1$ and $e^t$, $W$ itself must belong to $\mathfrak{l}$.  Then $e^t$ is also an eigenvalue of $\Ad_{h^t}$ of multiplicity $1$, hence $\lambda Z+X \in \mathfrak{l}$ for some $\lambda\in\R$ and $X \in \mathfrak{u}$. But $X$ is necessarily zero. Otherwise, the action of the $1$-parameter group in $U \ltimes \R^{n+1}$ generated by $\lambda Z+X$ on  $\R^{n+1}$  would have a fixed point, which contradicts the simple transitivity assumption on the $L$-action. 

2. $L$ is contained in $(H \times K) \ltimes T$, i.e. has no element with non-trivial projection to $U$. 

The only other eigenvalue of $\Ad_{h^t}$ is $1$, and the eigenspace $\mathfrak{q}$ is contained in $(\mathfrak{h} \oplus \mathfrak{k}) \ltimes \p^{\perp}$ ($\mathfrak{p}$ is invariant by the adjoint action of $H\times K$, hence also $\mathfrak{p}^\perp$). This, together with the first point, implies that  $\mathfrak{l}$ is contained in $(\mathfrak{h} \oplus \mathfrak{k}) \ltimes \R^{n+1}$, hence claim 2.

3. $L'$ preserves a spacelike vector in $\mathfrak{p}^\perp$. 

We consider the group $Q:=\{q\in L\mid h^t qh^{-t}=q\}$, whose Lie algebra is $\mathfrak q\subset (\mathfrak{h} \oplus \mathfrak{k}) \ltimes \p^{\perp}$. Let $P\subset L$ be the connected subgroup with Lie algebra $\mathfrak p$ and $P^\perp\cong \R^{n-1}$ the one with Lie algebra $\mathfrak p^\perp$. Then $L=Q\ltimes P$. Since $L$ acts simply transitively on $\R^{n+1}$, $Q$ must act simply transitively on $\mathfrak p^\perp$. To verify this it is sufficient to show that $Q$ acts transitively on $\mathfrak p^\perp$. Take $x\in\mathfrak p^\perp$. Then there exists an element $pq\in L$, $p\in P$, $q\in Q$ such that $pq(0)=x$. Since $q(0)$ and $x$ are in $\mathfrak p^\perp$, $p$ is the identity map. Hence $Q$ acts transitively on $\mathfrak p^\perp$.  
Since the action of $H$ on $\mathfrak p^\perp$ is trivial, the projection of $Q$ to $\O(n-1) \ltimes P^\perp$  also acts simply transitively on $\mathfrak p^\perp \cong \R^{n-1}$, which implies, using Lemma \ref{Lemma: solvable in O(n)xR^n}, that the linear part of $Q$ fixes some vector in $\mathfrak p^\perp$ (necessarily spacelike). However, the linear part of $L$ coincides with the linear part of $Q$, i.e.  there is a spacelike vector in $\mathfrak p^\perp$ fixed by $L'$. 
\end{proof}

\begin{proof}[Proof of Theorem $\ref{Theorem section flat: parallel fields}$]
 Let $M^{n+1}$ be a compact, connected flat Lorentzian manifold. By Theorem 1.3, there is a finite index subgroup $\Gamma'$ of $\Gamma:=\pi_1(M)$ that has a syndetic hull $L\subset \O(1,n)\ltimes \R^{n+1}$ which acts simply transitively on the universal cover of $M^{n+1}$, i.e. a finite cover of $M$ is isometric to $\Gamma' \backslash L$. The existence of a parallel vector field on the finite cover $\Gamma' \backslash L$ follows from Theorem \ref{Theorem: solvable implies invariant vector}.

 To prove the existence of a parallel field of lightlike directions, observe first that when there is a timelike vector which is $L'$-invariant, the claim is a direct consequence of Bieberbach's theorem. Indeed, up to finite index, $\Gamma$ is contained in the translation part $\R^{n+1}$ of the isometry group, hence any constant vector field of $\Mink^{n,1}$ induces a parallel vector field on the quotient. To conclude, we proceed as in Step 1 of the previous proof. If the linear part $L'$ preserves a lightlike or timelike line or plane, then it preserves a lightlike direction, up to taking a finite cover. Otherwise, it preserves a maximal spacelike subspace, but repeating the process again on its orthogonal gives an invariant line or plane, which is lightlike or timelike. 
\end{proof}
\begin{remark}
We proved that any compact flat Lorentzian manifold $M= \Gamma \backslash \Mink$ admits a parallel vector field. When the latter is lightlike, $M$ is a flat plane wave and $\Gamma$ preserves a lightlike vector field. So compact flat  plane waves are the compact quotients of $\Mink$ with fundamental group contained in $\mathsf{SPol}$. 
\end{remark}

In the following we give examples of tori with a non-equicontinuous spacelike parallel flow (see Theorem~\ref{Introduction-Theorem: parallel fields}), which is  in fact Anosov in dimension $3$ and partially hyperbolic in higher dimension. These examples admit a lightlike parallel line field, but no lightlike (neither timelike) parallel vector field.
\begin{example}\label{Example: SOL-Anosov} 
Let $q = \Sigma a_{ij} x^i x^j$ be a Lorentzian quadratic form on $\R^{n}$. Consider the flat 
      Lorentzian torus $(\T^n= \R^n/ \Z^n, q)$. Its isometry group is generated by translations together 
      with linear transformations $\O_\Z(q)= \O(q) \cap \GL(n, \Z)$, where $\O(q)$ is the orthogonal group of $q$.
      Let $h \in \O_\Z (q)$ be a partially hyperbolic matrix, i.e. there exists a $2$-dimensional $h$-invariant space of signature $(1,1)$ where the restriction of $h$ has eigenvalues different from $\pm1$. Let $h^t$ be a one parameter group $ \subset \O(q)$ such that $h^1 = h$. 
      Consider the suspension $M$  of $h$, that is $\T^n \times [0, 1]$, where $(x, 1) $ is identified with $(h(x), 0)$. 
      Endow it with the product Lorentzian (flat) metric $q + dt^2$. Then $\frac{\partial}{\partial t}$
      is a spacelike parallel vector field. When $n=2$, the $\frac{\partial}{\partial t}$-flow is Anosov. For $n\geq 3$, the $\frac{\partial}{\partial t}$-flow is partially hyperbolic. In both cases, the $\frac{\partial}{\partial t}$-flow is non-equicontinuous. 
 \end{example} 
 
\section{Preliminary facts on the isometry group}\label{Section: Preliminary facts}

Plane waves (and Brinkmann spacetimes in general) admit locally what is called Brinkmann coordinates, in which the metric has a particular form. When such coordinates exist globally on $J \times \R^{n+1}$, for an open interval $J$, we refer to it as a ``plane wave in standard form".
It is known that the Lie algebra of Killing fields of an indecomposable plane wave in standard form contains the Heisenberg algebra $\heis_{2n+1}$, which acts locally transitively on $\{u\}\times \R^{n+1}$ for all $u\in J$. 
In the homogeneous case, 
Blau and O'Loughlin \cite{Blau} determined the Lie algebra of Killing fields of a plane wave in standard form by analysis of the Killing equation, and classified plane waves in standard form that are homogeneous.
Note, however, that they only consider infinitesimal isometries. 
In \cite[Proposition 5.1]{Content1}, we show that for (general) simply connected non-flat homogeneous plane waves, the infinitesimal action of the Heisenberg algebra integrates to an isometric action of the Heisenberg group. Moreover, we compute the identity component of their isometry group. It turns out \cite{Content1} that these spaces coincide with those found in \cite{Blau}, i.e. these spaces admit global Brinkmann coordinates (see \cite[Section 6]{Content1}).

Let $(X,V)$ be a non-flat simply connected homogeneous plane wave of dimension $n+2$.  
In \cite[Theorem 1.5]{Content1} it is shown that the identity component of the isometry group of a simply connected non-flat homogeneous plane wave of dimension $n+2$ has the form
$$G_\rho:=(\R \times K) \ltimes_{\rho} \Heis_{2n+1}. $$
This result holds independently of the indecomposability of the plane wave. Here $K$ is a closed subgroup of $\SO(n)$ and $\rho$ is a morphism $\rho: \R\times K \to \Aut(\Heis)$, where $\rho$ restricted to $\R$ is given by $\rho(t)= e^{t{\bf L}}$,  ${\bf L} \in \Der(\heis)$, and $\rho(k)$ is the identity on the center of $\Heis$ and equals the standard action of $k$ on $A^+$ and $A^-$ for all $k\in K$. \\
Then $X$ identifies with a quotient $X_{\rho}:= G_{\rho} /I$, with the isotropy given by $I:= K \ltimes A^+$. Conversely, the $\rho$-actions for which $G_{\rho}$ preserves a Lorentzian metric on $G_{\rho}/I$ are characterized in \cite[Proposition 5.6, 5.8]{Content1}, and in this case, the Lorentzian space is necessarily a plane wave.
Accordingly, throughout this article, we will refer to a homogeneous  plane wave as a homogeneous space of the form $X_{\rho}$. 

In \cite{Blau} non-flat homogeneous plane waves come in two families. Using the notation introduced above, the two families differ in that $L$ acts trivially on the center of $\heis$ in one case and non-trivially in the other.
In other words, the center of $G_\rho$ is non-trivial in the first case and trivial in the second. The homogeneous plane waves of the first family are complete, those of the second family are incomplete.

\begin{fact}\label{Fact: parallel flow is Z}
    The parallel vector field $V$ is a generator of the center of $\heis$.
\end{fact}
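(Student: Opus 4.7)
The plan is to identify the parallel lightlike vector field $V$ with a nonzero scalar multiple of the fundamental vector field $\tilde Z$ on $X_\rho$ corresponding to the central generator $Z\in\heis$. I will first match them at the origin $o=eI$ using the algebraic structure of $\g/\i$, and then propagate the identification globally via Heisenberg-invariance.

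For the pointwise identification at $o$, the tangent space is canonically $T_oX_\rho\cong\g/\i=\R T\oplus A^-\oplus \R Z$, where $T$ spans the $\R$-factor of $G_\rho=(\R\times K)\ltimes_\rho\Heis_{2n+1}$ and $\i=\k\oplus A^+$. Since the foliation $\F$ is given by the orbits of the normal subgroup $K\ltimes\Heis$ (as recalled in Section~\ref{Section: Preliminary facts}), the hyperplane $V_o^\perp$ corresponds under this identification to $A^-\oplus \R Z$. Being lightlike, $V_o$ lies in $V_o^\perp$ and spans the kernel of the degenerate bilinear form $g_o|_{V_o^\perp}$. Using the $I$-invariance of $g_o$, together with the facts that $K\subset \SO(n)$ acts standardly on $A^-$ and that $A^+$ acts on $\g/\i$ unipotently with $[A^+,A^-]\subset \R Z$, a direct inspection of the invariants forces $g_o$ to make $A^-$ Euclidean and orthogonal to $Z$, with $Z$ null and paired nontrivially with $T$. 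Hence $\ker(g_o|_{V_o^\perp})=\R Z$, which gives $\tilde Z_o=c\,V_o$ for some nonzero scalar $c$.

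For the global extension, both $V$ and $\tilde Z$ are Killing vector fields ($V$ because $\nabla V=0$ implies $\mathcal{L}_V g=0$, and $\tilde Z$ because $Z$ generates isometries). Since $Z$ is central in $\heis$, the field $\tilde Z$ is $\Heis$-invariant, and as $\Heis$ acts transitively on each leaf of $\F$, this determines $\tilde Z$ on the leaf through $o$ from its value at $o$; the parallel field $V$ is likewise determined on this totally geodesic flat leaf from $V_o$. Therefore $\tilde Z=cV$ along the leaf through $o$. To spread the identity across leaves, I would invoke the explicit Brinkmann-coordinate description of $X_\rho$ established in \cite[Section~6]{Content1} (see also \cite{Blau}): in coordinates $(u,v,x_1,\dots,x_n)$ the metric takes the plane-wave form $g=2\,du\,dv+H(u,x)\,du^2+\sum dx_i^2$, with $V=\partial_v$, and the central one-parameter subgroup of the Heisenberg action is realized as translation in $v$, so $\tilde Z=\partial_v=V$ tautologically.

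The main obstacle is ensuring that the proportionality constant is the same on every leaf, i.e.\ that the $(\R\times K)$-action, which may a priori scale $Z$ (for instance in the second Blau family where $\mathbf{L}$ acts nontrivially on the center), transports $\tilde Z$ and $V$ by the same factor. The cleanest way to circumvent this is precisely the coordinate description above, which renders the identification $\tilde Z=V$ manifest and reduces the statement to inspection of the Killing-field formulas in \cite{Content1}; intrinsically, it follows from the uniqueness of Killing vector fields with prescribed $1$-jet at $o$, applied to $\tilde Z-cV$.
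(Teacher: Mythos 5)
Your argument reaches the right conclusion but by a genuinely different route from the paper's. The paper's proof extracts from the $\Ad_h$-invariance of $g_o$ ($h\in A^+$ unipotent on $\g_\rho/\i$, with $z$ spanning the relevant eigenspace) only that $z$ is lightlike, and then gets the global identification in one stroke: $\Heis$ contains an abelian subgroup whose Lie algebra contains $z$ and which acts transitively on the leaves of $V^\perp$, so by Globke--Leistner \cite[Theorem 3]{Leis} the field induced by $z$ is parallel, and uniqueness of the parallel lightlike field on a non-flat plane wave forces it to coincide with $V$. You instead push the pointwise analysis further --- determining enough of $g_o$ on $\g/\i$ to see that $\bar z$ spans the kernel of $g_o|_{V_o^\perp}$, whence $\tilde Z_o=cV_o$ --- and then handle the global step by appealing to the Brinkmann-coordinate model of \cite[Section 6]{Content1}, in which $V=\partial_v$ is manifestly the central Killing field. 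Your pointwise computation is correct (and is in fact a refinement of the paper's lightlikeness step), and the coordinate route is legitimate given that reference, though it arguably outsources the substance of the Fact to the construction of those coordinates, whereas the paper stays inside the abstract $G_\rho/I$ picture and instead imports the parallelism criterion of \cite{Leis}.

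Two caveats on your ``intrinsic'' fallbacks, which as written do not close the argument on their own. First, the leaf-by-leaf propagation compares two different extensions of $V_o$ (parallel transport for $V$, push-forward under $\Heis$ for $\tilde Z$); since an isometry may rescale a lightlike parallel field, agreeing at $o$ does not yet give agreement along the leaf, let alone a leaf-independent constant $c$ --- you correctly flag this, but it means this version is incomplete. Second, the appeal to uniqueness of Killing fields with prescribed $1$-jet requires verifying $\nabla\tilde Z|_o=0$, which you assert rather than prove. As it stands, the coordinate citation is what carries the global step; if you want a self-contained proof, the paper's use of \cite[Theorem 3]{Leis} plus uniqueness of the parallel lightlike direction is the shorter path.
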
   
\begin{proof}
Let $z$ be a generator of the center of $\heis$. The Lorentzian scalar product on $T_{o}(G_{\rho}/I)$ is $\Ad_h$-invariant for any $h \in A^+$. Consider a non-trivial $h \in A^+$. The action of $\Ad_h$ on $\mathfrak{g}_\rho / \mathfrak{i}$ is a unipotent matrix for which $z$ is the eigenvector of eigenvalue $1$. This implies that $z$ is necessarily lightlike. Moreover, $\Heis$ contains an abelian subgroup (whose Lie algebra contains $z$) acting transitively on the leaves of $V^\perp$. So by \cite[Theorem 3]{Leis}, the vector field on $G_\rho/I$ induced by the action of $z$ is parallel. Now, when $X$ is non-flat, it has a unique parallel lightlike vector field, which is then the one induced by $z$.    
\end{proof}

Let us now make the following observation. Let $G={\Isom}(X_{\rho})$ denote the full isometry group of a non-flat simply connected homogeneous plane wave $X_{\rho}$. The identity component of $G$ is ${\Isom}^{\sf{o}}(X_{\rho})=G_{\rho}$. We have that $G_{\rho}$ has finite index in $G$ (see \cite[Proposition 5.1]{Content1}). So for a compact $(G,X_{\rho})$-manifold, there is a finite cover which is a $(G_{\rho},X_{\rho})$-manifold. Hence, when studying compact quotients, one restricts to $(G_{\rho}, X_{\rho})$-manifolds.

\section{Discrete subgroups}\label{Section: Fundamental group} 
In Sections \ref{Section: Fundamental group}-\ref{Section: Equicontinuity} we consider $G_{\rho}$ to be the connected component of the isometry group of a simply connected non-flat homogeneous plane wave. For notations to be lighter the Heisenberg group $\Heis_{2n+1}$ will be denoted simply by $\Heis$. 
Although we introduced $\Heis$ as a linear group, we will also use the common realization  of $\Heis$ as an extension of $\C^n$ by $\R$ and we will write $\Heis\cong \R \times \C^n$. Under this identification, we have  $A^+\cong \R^n=\{0\}\times \R^n\subset \R \times \C^n$ and $A^-\cong \R^n=\{0\}\times (i\R)^n\subset \R \times \C^n$.

Let $G_{\rho}= (\R \times K) \ltimes_{\rho} \Heis$, $I = K \ltimes A^+$, and $X_{\rho}:= G_{\rho}/I$. Unless otherwise stated, all over the paper, $\R$ acts on $\Heis$ via a morphism $\rho: \R \to \Aut(\Heis)$, with $\rho(t)= e^{t{\bf L}}$,  ${\bf L} \in \Der(\heis)$, and $K$ acts on $\Heis$, trivially on the center, and by standard action on $A^+$ and $A^-$. We can suppose (up to adding $\ad(h)$ for  a suitable $h\in \heis$) that ${\bf L}$ preserves $\mathfrak{a}^+ \oplus \mathfrak{a}^- \subset \heis$, where ${\mathfrak a}^\pm={\rm Lie}(A^\pm)$.\\

In this section, we generalize results from \cite[Section 3]{KO} to the case where $\rho$ is not semi-simple. We will use a different, more conceptual approach, providing the general case directly.\\

\noindent\textbf{Notations.}  Define the projection morphisms $p: G_{\rho} \to \R$, $p_K: G_{\rho} \to K$, and $r: G_{\rho} \to \R \times K$.

Observe that $G_{\rho}$ is a connected amenable Lie group, i.e. a Lie group  having a normal solvable subgroup with compact quotient. Hence any discrete subgroup $\Gamma$ of $G_{\rho}$ is virtually polycyclic (this follows from general properties of connected amenable Lie groups (see \cite[Lemma 2.2]{milnor1977fundamental}). Moreover, it is well known that polycyclic groups are finitely generated, therefore, so is any discrete subgroup of $G_{\rho}$.  The latter property will be used in the proof of the next theorem.\\

\noindent\textbf{Terminology.} Let $\Gamma$ be a discrete subgroup of $G_{\rho}$. We say that $\Gamma$ is \textit{straight} if $p(\Gamma)$ is discrete. Otherwise, it is \textit{non-straight}.\\
\begin{theorem}\label{Theorem Gamma}
       Let $\Gamma$ be a discrete subgroup of $G_{\rho}$. Then 
       \begin{itemize}
           \item[(1)] If $\Gamma$ is non-straight, then it is virtually nilpotent.
           \item[(2)]  If $\Gamma$ is straight, then either $\Gamma\cong \Z \ltimes \Gamma_0$ or $\Gamma = \Gamma_0$, where $\Gamma_0\subset K\ltimes\Heis$ is virtually nilpotent.  If in addition, $\Gamma$ acts properly cocompactly on $X_\rho$, then we are in the case $\Gamma\cong \Z \ltimes \Gamma_0$. Moreover, the subgroup $\Gamma_0$ has a finite projection to $K$. In particular, $\Gamma$ is virtually a nilpotent extension of the integers $\Z$  by a discrete subgroup of $\Heis$.
       \end{itemize} 
\end{theorem}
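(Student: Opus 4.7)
The plan is to split along the straight versus non-straight dichotomy for $p(\Gamma)\subset\R$ and, in each case, extract a normal subgroup $\Gamma_0:=\Gamma\cap\ker p\subset K\ltimes\Heis$ from which the structural statement is assembled. From the remarks preceding the theorem, $\Gamma$ is virtually polycyclic, hence finitely generated, so $p(\Gamma)$ is a finitely generated abelian subgroup of $\R$, which is either discrete (straight) or dense (non-straight). The central technical fact to establish throughout is that $\Gamma_0$ is virtually nilpotent: the group $K\ltimes\Heis$ is compact-by-nilpotent and hence of polynomial growth, so the finitely generated discrete subgroup $\Gamma_0$ has polynomial growth and is virtually nilpotent by Gromov's theorem.

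For the straight case, write $p(\Gamma)=\alpha\Z$ with $\alpha\ge 0$. If $\alpha=0$ then $\Gamma=\Gamma_0$; if $\alpha>0$ the short exact sequence $1\to\Gamma_0\to\Gamma\to\alpha\Z\to 1$ splits because $\Z$ is free, giving $\Gamma\cong\Z\ltimes\Gamma_0$. In the non-straight case, $\Gamma/\Gamma_0\cong\Z^r$ with $r\ge 2$, and to promote $\Gamma$ itself to virtual nilpotency I would pass to the closure $\overline{\Gamma}\subset G_\rho$: since $p(\overline{\Gamma})=\R$, a connectedness argument forces the identity component $\overline{\Gamma}^{\circ}$ to project onto $\R$. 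An inspection of closed connected subalgebras of $\mathfrak{g}_\rho=\R\oplus\mathfrak{k}\oplus\heis$ projecting onto $\R$, together with the facts that $\Gamma$ normalises $\overline{\Gamma}^{\circ}$ and that this component is amenable, pins $\overline{\Gamma}^{\circ}$ down to a compact-by-nilpotent connected Lie group; the finite-index subgroup $\Gamma\cap\overline{\Gamma}^{\circ}$ is then a discrete subgroup of such a group and so virtually nilpotent, again by polynomial growth.

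For the proper-cocompact refinement, I would argue separately that $\alpha>0$ and $p_K(\Gamma_0)$ is finite. If $\alpha=0$, then $\Gamma\subset K\ltimes\Heis$ preserves each leaf of the codimension-one foliation $\F$ (whose leaves are the $K\ltimes\Heis$-orbits, parametrised by $\R\cong G_\rho/(K\ltimes\Heis)$), so $\Gamma\backslash X_\rho$ surjects onto $\R$ and cannot be compact. For the finiteness of $p_K(\Gamma_0)$, the key point is that $\Gamma_0$ acts properly and cocompactly on any leaf $\mathcal{F}_0\cong\R^{n+1}$ (since $\Gamma\backslash X_\rho$ fibres over the circle $\alpha\Z\backslash\R$ with fibre $\Gamma_0\backslash\mathcal{F}_0$), and inside the automorphism group of the affine unimodular lightlike geometry on the leaf (that is, inside $\mathsf{L}_\u(n)$) the rotational ($K$-)part of a discrete properly-cocompact subgroup is forced to be finite by a Bieberbach-type argument. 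Combined with $\Gamma\cong\Z\ltimes\Gamma_0$ and $\Gamma_0$ virtually nilpotent, this yields the final statement that $\Gamma$ is virtually a nilpotent extension of $\Z$ by a discrete subgroup of $\Heis$.

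The main obstacle is the non-straight case: identifying $\overline{\Gamma}^{\circ}$ as virtually nilpotent without merely re-invoking Gromov's theorem on $\Gamma$ itself. The delicate point is that density of $p(\Gamma)$ combined with discreteness of $\Gamma$ in $G_\rho$ must be used to rule out non-zero real eigenvalues of the derivation $L\in\Der(\heis)$ governing the $\R$-action on the parts of $\heis$ seen by $\Gamma$. This requires simultaneous control of the $\R$-, $K$- and $\Heis$-components of elements of $\Gamma$ whose $p$-image approaches $0$, and is the step where finite generation and discreteness are genuinely both needed.
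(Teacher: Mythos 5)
Your treatment of the straight case and of the virtual nilpotency of $\Gamma_0\subset K\ltimes\Heis$ is sound (the polynomial-growth/Gromov route is a legitimate alternative to the paper's Zassenhaus argument for that sub-step, since $K\ltimes\Heis$ is compact-by-nilpotent and a finitely generated discrete subgroup inherits polynomial growth). But there is a genuine gap precisely where you flag the ``main obstacle'': the non-straight case, which is the heart of part (1). Your plan to pass to the closure $\overline{\Gamma}\subset G_\rho$ collapses at the first step: $\Gamma$ is discrete, hence closed, so $\overline{\Gamma}=\Gamma$ and $\overline{\Gamma}^{\circ}=\{e\}$; there is no nontrivial identity component to analyze, and $p(\overline\Gamma)=p(\Gamma)$ is dense but not equal to $\R$. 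What you presumably intend is the closure of the projection $r(\Gamma)\subset\R\times K$ (which is non-discrete and does surject onto $\R$), but even then the conclusion you need --- that a finite-index subgroup of $\Gamma$ sits inside a compact-by-nilpotent connected subgroup of $G_\rho$ --- does not follow from ``inspection of subalgebras projecting onto $\R$'': $\R\ltimes\Heis$ with $\mathbf L$ hyperbolic projects onto $\R$ and has exponential growth. Moreover, no eigenvalue analysis of $\mathbf L$ is actually needed. The paper's mechanism is different and entirely avoids it: since $\overline{r(\Gamma)}$ fibers over $\R$ with compact fiber, it has finitely many components, so $\Gamma^0:=r^{-1}\bigl(r(\Gamma)\cap\overline{r(\Gamma)}^{\circ}\bigr)\cap\Gamma$ has finite index in $\Gamma$ and (by density) admits generators $r_ih_i$ with $r_i$ in any prescribed identity neighborhood $U_1\subset\R\times K$; one then applies the Heisenberg dilation automorphism $\Psi_a$ of $G_\rho$ (identity on $\R\times K$, multiplication by $a$ on a $K$-invariant complement of the center and by $a^2$ on the center) to push the $h_i$ into $U_2$ without disturbing the $r_i$, so that $\Psi_a(\Gamma^0)$ is generated by elements of a Zassenhaus neighborhood $U_1\times U_2$ and is therefore nilpotent. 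This dilation trick is the missing idea in your proposal.

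Two smaller points. First, your ``Bieberbach-type argument'' for the finiteness of $p_K(\Gamma_0)$ is not off-the-shelf: $\mathsf{L}_{\u}(n)=(\O(n)\ltimes\R^n)\ltimes\R^{n+1}$ is not the isometry group of a Riemannian metric, so Bieberbach does not apply directly. The paper instead passes (via Selberg) to a torsion-free finite-index subgroup $\Gamma_0'$, observes that $\Gamma_0'\backslash (K\ltimes\Heis)/(K\ltimes A^+)$ is a compact complete affine manifold with nilpotent holonomy, and invokes the Fried--Goldman--Hirsch theorem to conclude that the linear holonomy is unipotent, hence that the $K$-part is trivial. Second, your argument that properness and cocompactness force $p(\Gamma)\cong\Z$ (via the surjection of $\Gamma\backslash X_\rho$ onto $\R$ when $p(\Gamma)$ is trivial) is correct and matches the intended reading of the statement.
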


First we state the following two Lemmas and fact.

\begin{lemma}[Zassenhaus Lemma, Theorem 4.1.6 \cite{Thurston},   
Proposition 8.16 \cite{raghunathan1972discrete}]
    Let $G$ be any Lie group. Then there exists a neighborhood $U$ of the identity such that any discrete subgroup $\Gamma$ generated by $\Gamma\cap U$ is nilpotent. We call such a neighborhood a \textit{Zassenhaus neighborhood}.
\end{lemma}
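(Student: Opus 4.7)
The plan is to prove the Zassenhaus lemma by the classical route: use Baker--Campbell--Hausdorff (BCH) in a small exponential chart to show that commutators contract quadratically, iterate this to force nested commutators of generators to converge to the identity, and then invoke discreteness to conclude that the lower central series of $\Gamma$ terminates.

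First I would fix any norm $\|\cdot\|$ on the Lie algebra $\mathfrak{g}$ of $G$ and choose a relatively compact neighborhood $V\subset \mathfrak{g}$ of $0$ on which $\exp\colon V\to G$ is a diffeomorphism onto its image. The BCH formula gives, for small $X,Y\in\mathfrak{g}$,
$$\log\bigl([\exp X,\exp Y]\bigr)=[X,Y]+O\bigl(\|X\|^2\|Y\|+\|X\|\|Y\|^2\bigr),$$
so there exist constants $\kappa>0$ and $\delta>0$ such that for all $X,Y$ with $\|X\|,\|Y\|\leq\delta$,
$$\bigl\|\log[\exp X,\exp Y]\bigr\|\leq \kappa\|X\|\|Y\|.$$
I would pick $\varepsilon\leq\min(\delta,1/(2\kappa))$, shrink further so that the closed $\varepsilon$-ball image is compact, and set $U:=\exp(\{X:\|X\|<\varepsilon\})$.

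Now let $\Gamma$ be a discrete subgroup generated by $S:=\Gamma\cap U$. Because $\overline{U}$ is compact and $\Gamma$ discrete, the set $\Gamma\cap\overline{U}$ is finite, and hence so is $S$. Define inductively $C_1:=S$ and
$$C_{k+1}:=\{[s,c]\mid s\in S,\ c\in C_k\}\cup\{e\}.$$
Using the commutator estimate, an induction on $k$ shows that every $c\in C_k$ satisfies $\|\log c\|\leq(\kappa\varepsilon)^{k-1}\varepsilon$; in particular $C_k\subset U$ for all $k$, so $C_k\subset\Gamma\cap\overline{U}$ which is finite. Since the log-norms of nonidentity elements of $\Gamma\cap\overline{U}$ form a finite set bounded below by some $m>0$, the geometric decay $(\kappa\varepsilon)^{k-1}\varepsilon<m$ forces $C_N=\{e\}$ for some $N$.

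To finish, I would convert the vanishing of $C_N$ into the vanishing of the $N$-th term of the lower central series $\Gamma=\Gamma_1\supseteq\Gamma_2\supseteq\cdots$ with $\Gamma_{k+1}:=[\Gamma,\Gamma_k]$. The standard commutator identities $[ab,c]=a[b,c]a^{-1}\cdot[a,c]$ and $[a,bc]=[a,b]\cdot b[a,c]b^{-1}$, combined with the fact that $\Gamma$ is generated by $S$, imply by induction that $\Gamma_k$ is the normal subgroup of $\Gamma$ generated by the $(k-1)$-fold nested commutators $[s_1,[s_2,[\ldots,[s_{k-1},s_k]\ldots]]]$ with $s_i\in S$, i.e.\ by $C_k$. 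Hence $C_N=\{e\}$ gives $\Gamma_N=\{e\}$, so $\Gamma$ is nilpotent. The main technical step — and the only place any care is needed — is establishing the quadratic commutator estimate from BCH and choosing $\varepsilon$ so that the map $(X,Y)\mapsto\log[\exp X,\exp Y]$ is strictly contracting on $U\times U$; the remainder is a routine bookkeeping argument using commutator identities and the discreteness of $\Gamma$.
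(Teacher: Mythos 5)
Your argument is correct and is essentially the classical contraction proof of the Zassenhaus lemma; the paper itself gives no proof, citing Thurston (Theorem 4.1.6) and Raghunathan (Proposition 8.16), whose arguments follow exactly this route (quadratic BCH estimate for the commutator map, geometric decay of nested commutators of generators, discreteness forcing them to terminate at the identity, and the normal-closure description of the lower central series of $\langle S\rangle$). The only points worth making explicit in a written version are that $U$ should be taken symmetric so that $S=\Gamma\cap U$ is closed under inversion, and the group-theoretic fact that $[\Gamma,N]$ is the normal closure of $\{[s,t]:s\in S,\ t\in T\}$ when $\Gamma=\langle S\rangle$ and $N$ is the normal closure of $T$ --- both of which your sketch already implicitly handles.
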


 \begin{lemma}
     Let $\Lambda$ be a subgroup of a Lie group $G$. Define $\Lambda_0:=\Lambda\cap {\overline{\Lambda}}^{\mathsf{o}}$, where ${\overline{\Lambda}}^{\mathsf{o}}$ denotes the identity component of the (topological) closure of $\Lambda$. Then the subgroup $\Lambda_{0}$ can be generated by $\Lambda_{0}\cap V$ for any $V$ neighborhood of identity in ${\overline{\Lambda}}^{\mathsf{o}}$. 
 \end{lemma}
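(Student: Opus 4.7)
The plan is to first show that $\Lambda_0$ is dense in $H:=\overline{\Lambda}^{\mathsf{o}}$, then prove that the subgroup $\Lambda_0^V\subset\Lambda_0$ generated by $\Lambda_0\cap V$ is already dense in $H$, and finally upgrade density to equality via a short approximation argument. By shrinking $V$ if necessary, one may assume it is open in $H$.

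For the first step, the closed subgroup theorem gives that $\overline{\Lambda}$ is a Lie subgroup of $G$; consequently its identity component $H$ is open in $\overline{\Lambda}$. Since $\Lambda$ is tautologically dense in $\overline{\Lambda}$, intersecting with the open set $H$ yields that $\Lambda_0=\Lambda\cap H$ is dense in $H$.

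For the second step, since $\Lambda_0$ is dense in $H$ and $V\subset H$ is open, every point of $V$ is a limit of elements of $\Lambda_0$ that eventually lie in $V$, so $\Lambda_0\cap V$ is dense in $V$. Consequently the closure of $\Lambda_0^V$ in $H$ contains $V$, hence a whole neighbourhood of the identity; but a closed subgroup of a connected topological group containing a neighbourhood of the identity must be the entire group, so $\overline{\Lambda_0^V}=H$. Finally, given any $\gamma\in\Lambda_0$, I would choose a sequence $\gamma_n\in\Lambda_0^V$ with $\gamma_n\to\gamma$ in $H$; then $\gamma_n\gamma^{-1}\to e$ and lies in $\Lambda_0$, so for $n$ large it belongs to $\Lambda_0\cap V\subset\Lambda_0^V$, whence $\gamma=(\gamma_n\gamma^{-1})^{-1}\gamma_n\in\Lambda_0^V$.

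The only mildly subtle point is the opening observation that $H$ is open in $\overline{\Lambda}$, which is exactly what allows the density of $\Lambda$ in $\overline{\Lambda}$ to restrict to density of $\Lambda_0$ in $H$; the remainder is a formal topological-group computation, and the main ``obstacle'' really reduces to organizing these density statements in the right order.
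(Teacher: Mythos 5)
Your proof is correct and follows essentially the same route as the paper: establish that $\Lambda_0$ is dense in $H=\overline{\Lambda}^{\mathsf{o}}$, deduce that the subgroup generated by $\Lambda_0\cap V$ is dense in $H$ (via the fact that a subgroup of a connected group containing an identity neighbourhood is everything), and then recover each $\gamma\in\Lambda_0$ by approximating it within $V$. The paper's version is just a more compressed write-up of the same three steps, so no further comment is needed.
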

        \begin{proof}
            Let $V$ be a neighborhood of the identity  in $\overline {\Lambda}^{\mathsf{o}}$. Define $\Lambda_1:= \langle \Lambda_0 \cap V \rangle$.  Then $\Lambda_1$ is dense in $\overline {\Lambda}^{\mathsf{o}}$ since $V\subset\overline{\Lambda_1}\subset\overline{\Lambda_0}=\overline {\Lambda}^{\mathsf{o}}$ and $\overline {\Lambda}^{\mathsf{o}}$ is connected. Now, let $\lambda_0\in \Lambda_0$, by density of $\Lambda_1$ there is $\lambda_1$ such that $\lambda_0\lambda_1^{-1}\in V$. 
        \end{proof}
        The following fact will be used to prove the second part of point (2).
\begin{fact}[Hirsch, Goldman, Fried \cite{fried1981affine} Theorem A]\label{fact2.7}
    Let $M$ be a compact affine manifold with nilpotent holonomy group. Then completeness of $M$ in the sense of $(\Aff(\R^{n}), \R^{n})$-structure is equivalent to the linear holonomy being unipotent.
\end{fact}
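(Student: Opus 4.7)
The plan is to produce, from the holonomy $\Gamma := \pi_1(M)$ viewed as a subgroup of $\Aff(\R^n)$, a connected nilpotent Lie subgroup $N \subset \Aff(\R^n)$ containing $\Gamma$ as a uniform lattice, and then to analyze the affine geometry of the $N$-action in each direction of the equivalence. After passing to a finite-index torsion-free subgroup (available because $\Gamma$ is finitely generated nilpotent linear), one may take $N$ to be the identity component of the Zariski closure of $\Gamma$ in $\Aff(\R^n)$; this is a connected nilpotent algebraic group, and Malcev's theorem identifies $\Gamma$ as a uniform lattice in $N$.

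For the easier direction ``$M$ complete $\Rightarrow$ linear holonomy unipotent'', apply the Kostant--Sullivan theorem that a compact complete affine manifold carries a parallel volume form, which translates to $|\det L(\gamma)| = 1$ for every $\gamma \in \Gamma$. Combined with properness of the $\Gamma$-action (upgraded to a proper $N$-action by cocompactness of $\Gamma$ in $N$) and the Jordan decomposition $N = S \cdot U$ into an abelian reductive part $S$ and the unipotent radical $U$, one checks that $S$ must be trivial: an $\R$-split factor in $S$ would contradict properness along unipotent orbits, and a compact torus factor would fix a point of $\R^n$ whose image in $M$ would violate the freeness of the $\Gamma$-action. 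Hence $N = U$ is unipotent and so is the linear holonomy.

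For the harder direction ``linear holonomy unipotent $\Rightarrow$ $M$ complete'', $N$ is now a connected unipotent subgroup of $\Aff(\R^n)$ with $\Gamma$ as a uniform lattice. Since unipotent affine actions have polynomial orbit maps with closed algebraic orbits, the $N$-orbits form a $\Gamma$-invariant foliation of $\R^n$ by parallel affine subspaces of a common dimension $d$, descending to a foliation $\mathcal{F}$ on $M$. The existence of a parallel volume form on $\R^n$ (from unipotence of the linear part) together with the compactness of both $M = \Gamma\backslash\R^n$ and $\Gamma\backslash N$ is then used in a transverse-volume/Fubini argument to force $d = n$; thus $N$ acts transitively, and since $N$ is simply connected unipotent and $\R^n$ contractible, simply transitively. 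The developing map is therefore a diffeomorphism $\widetilde M \to \R^n$, which is precisely completeness.

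The main obstacle is the dimension comparison in the converse direction, essentially the Markus conjecture restricted to nilpotent holonomy: one must rule out the possibility that the $N$-orbit foliation on $M$ has positive codimension. A clean route is induction on $\dim M$: a proper $N$-orbit foliation would realize $M$ as a flat-bundle-type object over a lower-dimensional compact affine manifold with nilpotent unipotent holonomy, and the inductive hypothesis applied to the base together with compactness of the typical leaf yields a contradiction.
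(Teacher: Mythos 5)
The paper does not prove this statement: it is quoted verbatim as Theorem~A of Fried--Goldman--Hirsch \cite{fried1981affine}, so your attempt is to be measured against that (substantial) theorem rather than against an argument in the text. As written, your proposal has gaps at every stage, starting with the construction of $N$. For an a priori incomplete $M$ the holonomy homomorphism $\pi_1(M)\to\Aff(\R^n)$ need be neither injective nor have discrete image, so you cannot realize the holonomy group as a uniform lattice in the identity component of its Zariski closure; assuming discreteness in the direction ``unipotent $\Rightarrow$ complete'' is circular, as is your later phrase ``compactness of $M=\Gamma\backslash\R^n$'', which already presupposes the conclusion. (Even for a discrete nilpotent linear group the Zariski-closure claim fails: the cyclic group generated by $\mathrm{diag}(2,3)$ is Zariski-dense in a two-dimensional torus, so Malcev's theorem, which concerns the abstract Malcev completion of a torsion-free finitely generated nilpotent group, does not give what you want here.) In the same direction, the orbits of a connected unipotent subgroup of $\Aff(\R^n)$ are closed but are in general neither affine subspaces nor of constant dimension (for $t\cdot(x,y)=(x,y+tx)$ the orbits are a point and a family of lines; for $t\cdot(x,y)=(x+t,y+tx+t^2/2)$ they are parabolas), so there is no foliation ``by parallel affine subspaces'', and the ``transverse-volume/Fubini'' step that is supposed to force transitivity is precisely the hard content of the theorem --- the nilpotent-holonomy case of the Markus conjecture --- and is nowhere supplied.

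The easier direction is also not established. Kostant--Sullivan prove that a compact complete affine manifold has vanishing Euler characteristic, not that it carries a parallel volume; the latter is the converse Markus statement and is open in general (for nilpotent holonomy it is a consequence of the very theorem you are proving, so invoking it is circular). Even granting $|\det L(\gamma)|=1$, this does not exclude semisimple linear parts with eigenvalues $\{2,1/2\}$, and a fixed point of a compact factor of $N$ does not contradict freeness of the $\Gamma$-action, since that point need not be fixed by any nontrivial element of $\Gamma$. The actual proof in \cite{fried1981affine} is organized differently: it uses the Fitting decomposition $\R^n=U\oplus W$ of the linear holonomy into its unipotent and fixed-point-free parts, produces a common fixed point for the induced affine action on the $W$-factor, and concludes $W=0$ for complete $M$ by a cohomological-dimension count; the converse direction is an induction built on that same decomposition rather than on orbit foliations of a syndetic hull. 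I recommend keeping this as a cited Fact, as the paper does, rather than attempting a self-contained proof.
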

     
\begin{proof}[Proof of Theorem $\ref{Theorem Gamma}$]
(1) The restriction of the projection $\R\times K\to \R$ to $\overline {r(\Gamma)}$ is a Lie group homomorphism $\overline {r(\Gamma)}\to \overline {p(\Gamma)}=\R$. It is surjective since $K$ is compact. Thus we obtain a fibration 
$$ \overline{r(\Gamma)}\cap K \hookrightarrow \overline{r(\Gamma)}\to \R.$$
Its fiber has only finitely many components since it is compact. Now we see from the long exact homotopy sequence of the fibration that also $\pi_0(\overline{r(\Gamma)})$ is finite. This implies that $\Lambda_0:={\overline{r(\Gamma)}}^{\mathsf{o}}\cap r(\Gamma)$ has finite index in $r(\Gamma)$. Consequently, also $\Gamma^0:=r^{-1}(\Lambda_0)\cap \Gamma$ has finite index in $\Gamma$. Hence it is sufficient to show that $\Gamma^0$ is nilpotent. Since $\Gamma^0$ has finite index in $\Gamma$, it is also finitely generated. We choose $U_1\subset\R\times K$ and $U_2\subset \Heis$ such that $U_1\times U_2$ is a Zassenhaus neighborhood in $G_\rho$. The above Lemma applied to $\Lambda=r(\Gamma)$ yields that $\Lambda_0$ is generated by $\Lambda_0\cap U_1$. Let $rh$ be one of finitely many generators of $\Gamma^0$, where $r\in\R\times K$, and $h\in\Heis$. Then $r=\lambda_1\cdot\ldots\cdot \lambda_k$ for $\lambda_j\in\Lambda_0\cap U_1$, $j=1,\dots,k$. Choose elements $\gamma_j\in\Gamma^0$ such that $r(\gamma_j)=\lambda_j$, $j=1,\dots,k$. Then 
\[ rh=\lambda_1\cdot\ldots\cdot\lambda_k h=\gamma_1\cdot\ldots\cdot\gamma_k h'\] 
for some $h'\in \Gamma^0\cap\Heis$ since $\Heis$ is normal in $G_\rho$. Thus we may replace $rh$ by $\gamma_1,\dots,\gamma_k$ and $h'$. Doing so for every generator of $\Gamma^0$ we obtain a set of generators $\{r_ih_i\}_{i=1}^m$, where $r_i\in U_1$ and $h_i\in\Heis$.
Let $A$ be a $K$-invariant complement of the center in $\Heis$. We consider the automorphism $\Psi$ of $G_\rho$ which is the identity on $\R\times K$, multiplication by $a\in\R_{>0}$ on $A$ and multiplication by $a^2$ on the center of $\Heis$. Choose $a$ so small that $\Psi(h_i)\in U_2$ for all $i=1,\dots,m$. Then $\Psi(\Gamma^0)=\langle r_i\Psi(h_i),\ i=1,\dots,m\rangle$ is generated by elements of $U_1\times U_2$, thus it is nilpotent. Consequently, also $\Gamma^0$ is nilpotent.\\
(2) If the projection of $\Gamma$ to the $\R$-factor is discrete, it is either trivial or isomorphic to $\Z$, and in both cases, the exact sequence $1 \to \Gamma \cap (K\ltimes\Heis) \to \Gamma \to p(\Gamma) \to 1$ splits. Hence, $\Gamma$ is isomorphic to $ p(\Gamma)\ltimes \Gamma_0$ for $\Gamma_0:= \Gamma \cap (K\ltimes\Heis)$. The group $\Gamma_0$ is virtually nilpotent. To see this, one can proceed as in the proof of (1) replacing $r$ by the projection $p_K$ to $K$: The closure $\overline{p_K(\Gamma_0)}$ has finitely many connected components. Thus $\Lambda_0:={\overline{p_K(\Gamma_0)}}^{\mathsf{o}}\cap p_K(\Gamma_0)$ has finite index in $p_K(\Gamma_0)$ and $\Gamma^0:=p_K^{-1}(\Lambda_0)\cap \Gamma_0$ has finite index in $\Gamma_0$. Now we use a Zassenhaus neighborhood $U_1\times U_2$, $U_1\subset K$, $U_2\subset \Heis$ to see that $\Gamma^0$ is nilpotent. 
To complete the proof of (2), we will show that $\Gamma_0 \cap \Heis$ has finite index in $\Gamma_0$. Since $K\ltimes \Heis$ is a linear group, by Selberg's lemma, $\Gamma_0$ contains a torsion free subgroup $\Gamma_0'$  of finite index. The leaf $\Gamma_0' \backslash K\ltimes \Heis/K\ltimes A^+$ is a compact manifold having a complete affine structure with nilpotent holonomy (Remark \ref{Remark: Heis-leaves have unimodular lightlike structure}).   It follows from Fact \ref{fact2.7} that the linear part of $\Gamma_0'$ is unipotent, meaning that $\Gamma_0'$ has a trivial $K$-part. Thus, $p_K(\Gamma_0)$ is finite. Let $\Gamma_1:= \Gamma_0\cap \Heis$, since $\Heis$ is preserved by the $\Z$-action, we can define $\Z\ltimes \Gamma_1$ which is a nilpotent extension of $\Z$ and clearly of a finite index in $\Gamma$.
\end{proof}
\begin{remark}
The first part of point $(2)$ in Theorem $\ref{Theorem Gamma}$ can also be deduced from Auslander's theorem \cite[Theorem 3]{Auslander1}. Note that the formulation in Auslander's paper is slightly incorrect.
 \end{remark}

\begin{remark}\label{Remark: Heis-leaves have unimodular lightlike structure}
 Let $\Gamma$ be a torsion free discrete subgroup of $G_\rho$ which acts properly and cocompactly on $X_{\rho}$.  Then the quotient space $\Gamma\backslash X_{\rho}$ is a compact manifold foliated by a codimension $1$ foliation, given by the $K \ltimes \Heis$-action. Since the $K \ltimes \Heis$-foliation is defined by a closed $1$-form, the leaves are either all closed or all dense, and they are closed exactly when $p(\Gamma)$ is discrete.
In the straight case, the $K \ltimes \Heis$-leaves are complete compact affine manifolds, modeled on the so-called affine unimodular lightlike geometry (see Definition $\ref{Definition-Introduction: Affine unimodular lightlike group}$). Namely, they have a $(\mathsf{L}_{\u}(n), \R^{n+1})$-structure.
\end{remark}

  \begin{remark}[Weakly plane waves]
Let $(X,V)$ be a simply connected, non-flat homogeneous plane wave. The parallel lightlike vector field $V$ is generated by the center of $\heis$ (see Fact $\ref{Fact: parallel flow is Z}$). Since $G_\rho$ preserves the center of $\Heis$, any quotient $M$ of $X$ by a discrete subgroup of $G_\rho$ inherits a parallel lightlike line field $l$. If $V$ were timelike or spacelike, then, by passing to a time-orientable cover of $M$, one would obtain a global parallel vector field spanning $l$  by taking a constant-length section that is compatible with the orientation. But here $V$ is lightlike, so there is no way to choose a global parallel section a priori. In this situation, time orientation does not help to patch together the existing local parallel sections. 

 Let us consider this in more detail. Recall that there are two families of simply connected non-flat homogeneous plane waves, one in which the center of $\Heis$ is centralized by $G_\rho$, and one in which the $\R$-factor acts non-trivially on the center of $\Heis$. Therefore, if we take a quotient $M$ of a plane wave from the first family, then $V$ descends to a lightlike parallel vector field on $M$ since in this case $V$ is preserved by any discrete subgroup of $G_\rho$. Thus $M$ is also a plane wave.  However, in the second family, if the discrete subgroup has non-trivial projection to the $\R$-factor, then it contains elements sending $V$ to $\lambda V$, with $\lambda \neq \pm 1$. Therefore only the lightlike parallel line field $\R V$ descends to the quotient. Such quotients are weakly plane waves in the sense of Remark~$\ref{remark: Kundt space- weakly plane wave}$.
  The second family is contained in the more general family of pp-waves (not necessarily homogeneous) described in \cite[Section 6.4]{baum2014}. There it is shown that all these pp-waves admit (non-compact) quotients which are time-orientable and admit a parallel lightlike line field but no parallel lightlike vector field.
The question now is whether such a situation can also arise with a compact quotient. If the quotient of a plane wave in the second family is compact, it possesses a lightlike parallel line field but no lightlike parallel vector field. Indeed, 
since the $K \ltimes \Heis$-leaves have codimension $1$ in $X$, the compactness of the quotient implies that the fundamental group has non-trivial projection to the $\R$-factor. Such compact quotients indeed exist (see \cite[Theorem 1.4]{allout2022homogeneous} for examples in dimension $3$). 	\end{remark}

\section{Standardness and semi-standardness}\label{Section: syndetic hull, standard semi-Standrad}

\subsection{Existence of a syndetic hull}
\label{Subsection:Definition of syndetic hull?} 
\begin{definition}[Syndetic hull]\label{Definition: syndetic hull}
    Let $G$ be a Lie group and let $\Gamma$ be a discrete subgroup. A \emph{syndetic hull} of $\Gamma$ in $G$ is a closed connected Lie subgroup $N \subset G$  containing $\Gamma$ such that $\Gamma\backslash N$ is compact. 
\end{definition}
\begin{definition}[$\Z$-Syndetic hull]\label{Definition: Z-syndetic hull}  A \emph{$\Z$-syndetic hull} of $\Gamma$ in $G$ is a closed Lie subgroup $N\subset G$ having an infinite cyclic component group such that $\Gamma\subset N$ and $\Gamma\backslash N$ is compact.
\end{definition}

So far we proved, using the existence of Zassenhaus neighborhoods, that up to finite index, any discrete subgroup of $G_{\rho}$ is either virtually nilpotent, or is an extension of $\Z$ by a virtually nilpotent group. The same proofs, using now what we call here `strong Zassenhaus neighborhood' instead of a Zassenhaus neighborhood, lead to a stronger result, namely the existence of a nilpotent syndetic hull (in the non-straight case), or of a  $\Z$-syndetic hull with a nilpotent identity component  (in the straight case)  for a finite index subgroup.  Here we are especially interested in the situation where the discrete group acts properly and cocompactly on $X_\rho$. We will see that for such a group in the straight case the identity component of the $\Z$-syndetic hull is contained in $\Heis$.

\begin{fact}\cite[Theorem 4.1.7]{Thurston}\label{Thurston: strongly Zassenhaus neighborhood}
Let $G$ be a Lie group. There exists a neighborhood $U$ of $1$ in $G$ such that any discrete subgroup $\Gamma$ of $G$ generated by $U \cap \Gamma$ is a cocompact subgroup of a connected, closed, nilpotent subgroup $N$ of $G$. 	
\end{fact}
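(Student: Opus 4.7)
The goal is to strengthen the classical Zassenhaus Lemma by producing, in addition to nilpotency of $\Gamma$, an ambient closed connected nilpotent Lie subgroup $N\subset G$ in which $\Gamma$ sits cocompactly. First I would fix a norm $\|\cdot\|$ on $\mathfrak{g}$ and choose a convex symmetric zero-neighborhood $\mathcal{V}\subset\mathfrak{g}$ small enough that (i) $\exp\colon \mathcal{V}\to U:=\exp(\mathcal{V})$ is a diffeomorphism with inverse $\log$, (ii) the Baker--Campbell--Hausdorff series $X*Y=\log(\exp X\exp Y)$ converges absolutely on $\mathcal{V}\times\mathcal{V}$, and (iii) a bracket contraction estimate $\|\log[\exp X,\exp Y]\|\leq c\,\max(\|X\|,\|Y\|)$ holds for some fixed $c<1$ and all $X,Y\in\mathcal{V}$. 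Estimate (iii) follows from the BCH expansion $\log[\exp X,\exp Y]=[X,Y]+O(\|X\|^{2}\|Y\|+\|X\|\|Y\|^{2})$ combined with the bilinear bound $\|[X,Y]\|\leq C_{0}\|X\|\|Y\|$, upon shrinking $\mathcal{V}$.

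Next, let $\Gamma$ be a discrete subgroup generated by $S:=\Gamma\cap U$. Iterated commutators of generators from $S$ have logarithms decaying geometrically in bracket-depth by (iii), so by discreteness all sufficiently nested commutators are trivial; this is the standard Zassenhaus conclusion that $\Gamma$ is nilpotent. The key additional step is to verify that the real-linear span $\mathfrak{n}\subset\mathfrak{g}$ of $\log(S)$ is a nilpotent Lie subalgebra: bracket-closure follows by combining BCH with the contraction estimate and the discreteness of $\Gamma$, while nilpotency follows from the vanishing of iterated brackets of generators at bounded depth. Let $N\subset G$ be the analytic subgroup with Lie algebra $\mathfrak{n}$. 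Since $\mathfrak{n}$ is nilpotent, BCH defines a global group law on $\mathfrak{n}$ and, after shrinking $\mathcal{V}$ once more, $\exp\colon\mathfrak{n}\to N$ is a diffeomorphism, so $N$ is simply connected and nilpotent. The discrete subgroup $\Gamma\subset N$ is finitely generated nilpotent with $\log(S)$ spanning $\mathfrak{n}$ as an $\R$-vector space, so Malcev's theorem (applied to a torsion-free subgroup of finite index, obtained via Selberg's lemma, and then extended back) identifies $N$ with the Malcev completion of $\Gamma$; in particular $\Gamma\backslash N$ is compact.

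The main obstacle is showing that $N$ is closed in $G$: an analytic subgroup of a Lie group need not be closed, as for instance an irrational line in a torus. A general result identifies the closure $\overline{N}$ as a Lie subgroup with $\mathrm{Lie}(\overline{N})/\mathfrak{n}$ abelian, so $\overline{N}$ is at worst solvable, and a priori not equal to $N$. To force $\overline{N}=N$, I would exploit the fact that $\Gamma$ is a lattice in $N$ (from the previous paragraph) together with the discreteness of $\Gamma$ in $G$: a strictly larger closure would produce accumulation points of $N$-orbits in directions transverse to $\mathfrak{n}$, contradicting, after one further universal shrinking of $\mathcal{V}$, the local structure near the identity dictated by estimate (iii). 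The delicate feature, distinguishing this fact from the standard Zassenhaus Lemma, is that one single neighborhood $U$ must witness all the above simultaneously for every discrete $\Gamma\subset G$, so the shrinking of $\mathcal{V}$ in the final step must be uniform in $\Gamma$; this is the main technical point of the argument.
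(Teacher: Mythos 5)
The paper offers no proof of this Fact; it is quoted directly from Thurston \cite[Theorem 4.1.7]{Thurston}, so your attempt can only be judged on its own merits. Your overall strategy --- contract commutators \`a la Zassenhaus, exponentiate to a nilpotent subalgebra, invoke Malcev --- has the right general shape, but the central step is unjustified and, as stated, false: the real-linear span $\mathfrak{n}$ of $\log(\Gamma\cap U)$ need not be a Lie subalgebra of $\mathfrak{g}$. BCH gives $\log[\exp X,\exp Y]=[X,Y]+(\text{higher-order corrections})$; even granting that $[\exp X,\exp Y]$ again lies in $\Gamma\cap U$, this only says that $[X,Y]$ differs from an element of $\log(\Gamma\cap U)$ by correction terms which have no reason to lie in $\mathfrak{n}$. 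Closing the span under brackets would require controlling an infinite sequence of such corrections, and ``combining BCH with the contraction estimate and discreteness'' does not do this. The actual proofs build the ambient connected nilpotent group by induction on the lower central series of $\Gamma$ (working outward from the last nontrivial term, where the commutators have already collapsed), or first form the abstract Malcev completion and then construct its embedding into $G$ extending that of $\Gamma$; this is precisely the content separating Theorem 4.1.7 from the plain Zassenhaus Lemma 4.1.6, and it is missing from your sketch.

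Two further points. First, $\Gamma\cap U$ may contain torsion (finite subgroups of a torus, for instance), so Malcev's theorem does not apply directly; and Selberg's lemma is unavailable because neither $G$ nor $\Gamma$ is assumed linear, so ``extended back'' is not a proof. Second, the closedness of $N$, which you single out as the main obstacle and address with a vague accumulation argument, is in fact the easy part: once one knows $\Gamma\backslash N$ is compact, write $N=\Gamma C$ with $C\subset N$ compact; if $\gamma_n c_n\to x$ in $G$, extract $c_n\to c$, so $\gamma_n\to xc^{-1}$, which forces $\gamma_n$ to stabilize since $\Gamma$ is discrete (hence closed) in $G$, whence $x\in\Gamma C=N$. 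The genuine difficulty is entirely in producing a connected nilpotent $N\supseteq\Gamma$ with $\Gamma\backslash N$ compact, not in verifying that such an $N$ is closed.
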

We call such an identity neighborhood $U$ a \textit{strong Zassenhaus neighborhood}. 

\begin{proposition}\label{Proposition: syndetic hull for non-straight Gamma}
 Let $\Gamma$ be a discrete subgroup of $G_{\rho}:=(\R \times K) \ltimes_{\rho} \Heis$. If $\Gamma$ is non-straight, then up to finite index $\Gamma$ has a nilpotent syndetic hull $N$. Furthermore, $\Gamma$ acts properly and cocompactly on $X_{\rho}=G_{\rho}/I$ if and only if $N$ acts properly and cocompactly on~$X_{\rho}$.
\end{proposition}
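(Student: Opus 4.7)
The plan is to sharpen the argument of Theorem~\ref{Theorem Gamma}(1) by invoking a strong Zassenhaus neighborhood (Fact~\ref{Thurston: strongly Zassenhaus neighborhood}) in place of an ordinary one. Exactly as in that proof, I would first pass to the finite index subgroup $\Gamma^0 := r^{-1}(\Lambda_0) \cap \Gamma$, where $\Lambda_0 := \overline{r(\Gamma)}^{\mathsf{o}} \cap r(\Gamma)$; non-straightness enters here through the identity $\overline{p(\Gamma)} = \R$, which forces $\overline{r(\Gamma)}$ to surject onto $\R$ and hence to have finitely many connected components. Fixing a strong Zassenhaus neighborhood of the form $U_1 \times U_2 \subset (\R \times K) \times \Heis$ and running the generator replacement of Theorem~\ref{Theorem Gamma}(1), I obtain a finite set of generators of $\Gamma^0$ of the form $\{r_i h_i\}_{i=1}^m$ with $r_i \in U_1$ and $h_i \in \Heis$.

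Next I apply the rescaling automorphism $\Psi$ of $G_\rho$ that acts as the identity on $\R \times K$, by multiplication by $a \in \R_{>0}$ on a $K$-invariant complement of the center of $\Heis$, and by $a^2$ on the center. Choosing $a$ small enough that $\Psi(h_i) \in U_2$ for every $i$, the group $\Psi(\Gamma^0) = \langle r_i \Psi(h_i) : i=1,\ldots,m\rangle$ is generated by elements of $U_1 \times U_2$ and hence, by Fact~\ref{Thurston: strongly Zassenhaus neighborhood}, is a uniform lattice in a closed connected nilpotent subgroup $N' \subset G_\rho$. Since $\Psi$ is a Lie group automorphism, $N := \Psi^{-1}(N')$ is a closed connected nilpotent subgroup of $G_\rho$ containing $\Gamma^0$ as a uniform lattice; this is the sought-after syndetic hull.

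For the equivalence of the proper cocompact property, one direction is immediate: if $N$ acts properly and cocompactly on $X_\rho$, then the restriction to $\Gamma^0 \subset N$ is still proper, and combining $X_\rho = N \cdot C$ with $N = \Gamma^0 \cdot L$ (where $L \subset N$ is a compact fundamental domain) yields $X_\rho = \Gamma^0 \cdot (L \cdot C)$, so the $\Gamma^0$-action is cocompact. Conversely, assume $\Gamma^0$ acts properly cocompactly. Cocompactness of the $N$-action then follows at once from $\Gamma^0 \subset N$. For properness, given a compact $C \subset X_\rho$, set $C' := L \cdot C$; any $n = \gamma l \in N$ with $nC \cap C \neq \emptyset$ satisfies $\gamma C' \cap C' \neq \emptyset$, and properness of the $\Gamma^0$-action forces $\gamma$ to lie in a finite set $F$. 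Hence $\{n \in N : nC \cap C \neq \emptyset\} \subset F \cdot L$ is relatively compact.

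The only technically delicate point is the automorphism $\Psi$: it lets us shrink the Heisenberg component of the generators into a preassigned strong Zassenhaus neighborhood without perturbing their $(\R \times K)$-component, which is where non-straightness lives and which must stay small in order for the Zassenhaus generation property of $\Lambda_0$ to be preserved. Once $\Psi$ is in hand, the rest of the argument is bookkeeping around Fact~\ref{Thurston: strongly Zassenhaus neighborhood} and a standard compact-times-lattice manipulation for proper cocompact actions.
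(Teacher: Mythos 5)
Your proposal is correct and takes essentially the same route as the paper: rerun the argument of Theorem~\ref{Theorem Gamma}(1) with a strong Zassenhaus neighborhood in place of an ordinary one, use the dilation $\Psi$ to push the Heisenberg components of the generators into $U_2$, and conclude via Fact~\ref{Thurston: strongly Zassenhaus neighborhood}. You merely spell out details the paper leaves implicit, namely pulling $N'$ back through $\Psi^{-1}$ and the standard verification that a discrete group and its syndetic hull act properly cocompactly on the same spaces.
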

\begin{proposition}\label{Proposition: partial syndetic hull for straight Gamma}
 Let $\Gamma = \langle\hat{\gamma}\rangle \ltimes \Gamma_0$ be a straight discrete subgroup of $G_{\rho}:=(\R \times K) \ltimes_{\rho} \Heis$, acting properly and cocompactly on $X_\rho=G_{\rho}/I$. Then up to finite index $\Gamma_0$ has a nilpotent syndetic hull $N_0$ contained in $\Heis$, which is $\hat{\gamma}$-invariant. 
\end{proposition}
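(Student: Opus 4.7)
The plan is to reduce, via Theorem~\ref{Theorem Gamma}(2), to the case where $\Gamma_0$ is a torsion-free, $\hat{\gamma}$-invariant, finitely generated discrete nilpotent subgroup of $\Heis$, and then construct $N_0\subset\Heis$ as the Malcev closure of $\Gamma_0$ inside the simply connected nilpotent Lie group $\Heis$. The $\hat{\gamma}$-invariance of $N_0$ will then follow from the uniqueness of this closure.

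For the reduction, Theorem~\ref{Theorem Gamma}(2) gives that $p_K(\Gamma_0)$ is finite, so $\Gamma_0\cap\Heis$ has finite index in $\Gamma_0$, and it is $\hat{\gamma}$-invariant by normality of $\Heis$ in $G_\rho$. Applying Selberg's lemma, and then intersecting the finitely many $\hat{\gamma}$-conjugates of a chosen torsion-free finite-index subgroup (finitely many because a finitely generated group has only finitely many subgroups of any given index), we obtain a torsion-free, $\hat{\gamma}$-invariant finite-index subgroup contained in $\Heis$. We relabel this as $\Gamma_0$.

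To construct $N_0$, we use Malcev's theorem: every finitely generated torsion-free nilpotent group embeds as a cocompact lattice in a unique simply connected nilpotent Lie group $M(\Gamma_0)$ (the Malcev completion), and by the universal property the inclusion $\Gamma_0\hookrightarrow\Heis$ extends uniquely to an injective Lie group homomorphism $\iota:M(\Gamma_0)\to\Heis$ with closed image. Setting $N_0:=\iota(M(\Gamma_0))$, this is a closed connected nilpotent subgroup of $\Heis$ in which $\Gamma_0$ is a cocompact lattice.

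For the $\hat{\gamma}$-invariance of $N_0$: the conjugate $\hat{\gamma}N_0\hat{\gamma}^{-1}$ is again a closed connected nilpotent subgroup of $\Heis$ containing $\Gamma_0$ as a cocompact lattice, so by uniqueness of the Malcev closure of a discrete subgroup in a simply connected nilpotent Lie group, it equals $N_0$. The main technical point, and the principal obstacle, is to ensure that $N_0$ is contained in $\Heis$: a direct application of Fact~\ref{Thurston: strongly Zassenhaus neighborhood} inside $G_\rho$ (as in the non-straight case handled by Proposition~\ref{Proposition: syndetic hull for non-straight Gamma}) could produce a subgroup with non-trivial projection to $K$. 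We avoid this by carrying out the construction inside $\Heis$ itself, which is made possible by the initial reduction to $\Gamma_0\subset\Heis$.
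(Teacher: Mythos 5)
Your proof is correct and follows essentially the same route as the paper's: reduce via Theorem~\ref{Theorem Gamma}(2) to the finite-index subgroup $\Gamma_0\cap\Heis$ and take its Malcev closure in $\Heis$, with the $\hat{\gamma}$-invariance coming from the uniqueness of that closure. The only superfluous step is the appeal to Selberg's lemma and the intersection of conjugates: any discrete subgroup of the simply connected nilpotent group $\Heis$ is automatically torsion-free, and $\Gamma_0\cap\Heis$ is already $\hat{\gamma}$-invariant since $\Heis$ is normal in $G_\rho$ and $\hat{\gamma}$ normalizes $\Gamma_0$.
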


\begin{proof}
The same proof as in Theorem \ref{Theorem Gamma} yields that $\Gamma$ (resp. $\Gamma_0$) is generated by a strongly Zassenhaus neighborhood in the non-straight (resp. straight) case. The claim then follows from  Fact \ref{Thurston: strongly Zassenhaus neighborhood}.  In the straight case, up to finite index, we have $\Gamma = \langle \hat{\gamma} \rangle \ltimes \Gamma_1$, with $\Gamma_1$ a subgroup of $\Heis$ (proof of Theorem \ref{Theorem Gamma} (2)). The Malcev closure of $\Gamma_1$ in $\Heis$ is a syndetic hull for $\Gamma_1$ which is $\hat{\gamma}$-invariant.  
\end{proof}

\subsection{Transitivity of cocompact actions of  Lie groups}\label{Subsection:Transitivity of cocompact actions of  Lie groups}

Let $\Gamma$ be a discrete subgroup of $G_{\rho}$ acting properly on $X_{\rho}\cong \R^{n+2}$. The aim of this section is to prove that the syndetic hull $N$ (resp. $N_0$) of $\Gamma$ (resp.  $\Gamma_0$) obtained in the previous section acts transitively on $X_{\rho}$ (resp. on an $\mathcal{F}$-leaf).

 One could actually ask a more general question: let $G$ be a Lie group acting properly cocompactly on a contractible manifold $X$, does $G$ act transitively? 
 In Proposition \ref{Proposition-Appendix A: transitive action under existence of a lattice}, we prove that the action is transitive if we further assume that $G$ has a torsion free uniform lattice. In this subsection we do not assume that $G$ has a lattice, and prove transitivity when $G$ is a connected nilpotent Lie subgroup of~$G_{\rho}$.

\begin{proposition}\label{Proposition: transitive action along the Heis-leaves, without lattice}
Let $N$ be a connected nilpotent Lie subgroup of $G:= K \ltimes \Heis$, acting cocompactly on the homogeneous space $Y := (K \ltimes \Heis) / (K \ltimes A^+)$. Then 
\begin{itemize}
    \item $N $ acts transitively,
    \item $N$ is contained in $\Heis$, and contains the center of Heisenberg. 
\end{itemize}

\end{proposition}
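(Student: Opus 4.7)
First I would identify $Y$ explicitly: using the decomposition $\heis = \a^+ \oplus \a^- \oplus \z$, one has $Y \cong \Heis/A^+ \cong \z \oplus \a^- \cong \R^{n+1}$, and $G = K\ltimes \Heis$ acts on $Y$ by affine transformations that preserve the center direction $\z$. Crucially, the further quotient $Y \to Y/Z \cong \a^- \cong \R^n$ intertwines the $G$-action with the natural action of the Euclidean motion group $K\ltimes A^-$ on $\R^n$. The proof then proceeds in three steps.

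\textbf{Step 1: $N \subset \Heis$.} Since $Z$ is central in $G$, the $N$-action on $Y$ descends to a cocompact action of the image $\bar N \subset K \ltimes A^-$ on $Y/Z \cong \R^n$. I would argue that any connected nilpotent subgroup of $K \ltimes \R^n$ acting cocompactly on $\R^n$ coincides with the translation subgroup $\R^n$: its projection to $K \subset \SO(n)$ acts on the translation part of the subgroup both skew-symmetrically and nilpotently, hence trivially; cocompactness on the Euclidean space then forces the translation part to be all of $\R^n$, and the nontriviality of the $K$-action on $\R^n$ forces the projection to $K$ to be trivial. In particular $p_K(N) = \{1\}$, so $N \subset \Heis$.

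\textbf{Step 2: $Z \subset N$.} This is the heart of the argument. Suppose for contradiction that $N \cap Z = \{0\}$. Since $[\heis,\heis] = \z$, we get $[\n,\n] \subset \n \cap \z = 0$, so $\n$ is abelian. A direct bracket computation then shows that $\n \cap (\a^+\oplus\z) = 0$: if $X \in \n \cap (\a^+\oplus\z)$ had a nonzero $\a^+$-component, then $[X,Y] \in \z$ would be nonzero for some $Y \in \n$ with nontrivial $\a^-$-component, contradicting abelianness. Hence $\n$ is the graph of a pair of linear maps $\beta\colon \a^- \to \a^+$ and $\zeta\colon\a^- \to \z$, with $\beta$ symmetric (by abelianness), and $\dim\n = n$. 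The key step — which I expect to be the main technical obstacle — is to exponentiate the resulting 1-parameter subgroups in $\Heis$ and exhibit an explicit $N$-invariant function of the form $f(z_0,a) = z_0 - \tfrac12\,\beta(a)\cdot a - \zeta(a)$ on $Y \cong \R \oplus \a^-$. Since $f$ surjects onto $\R$, this contradicts cocompactness, and we conclude $Z \subset N$.

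\textbf{Step 3: transitivity.} Once $Z \subset N$ is in hand, the rest is formal: combining $Z \subset N$ with the surjection $N \twoheadrightarrow \Heis/(A^+Z) \cong A^-$ from Step 1 gives $NA^+Z = \Heis$, hence $NA^+ = \Heis$. The $N$-orbit of the base point $o \in Y$ is therefore $N\cdot o = NA^+/A^+ = Y$, so $N$ acts transitively.
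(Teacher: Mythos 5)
Your argument is correct, and while your Step 1 coincides with the paper's (project to $K\ltimes A^-$ acting on $Y/Z\cong\R^n$, kill the $K$-part by combining nilpotency with skew-symmetry, then use cocompactness), the second half takes a genuinely different route. The paper does \emph{not} argue by contradiction on $\z\cap\n$: after Step 1 it splits on whether $N\cap A^+$ is trivial. If $N\cap A^+\neq\{0\}$, commutators with the $\a^-$-part immediately give $Z\subset N$ and hence transitivity; if $N\cap A^+=\{0\}$, it shows $N$ acts freely and properly on $Y\cong\R^{n+1}$, so $N\backslash Y$ is a compact manifold with vanishing homotopy groups by the long exact sequence of the fibration, hence a point --- giving transitivity and $\dim N=n+1$ first, and only then $Z\subset N$ (either $[N,N]=Z$, or $N$ is abelian of maximal dimension $n+1$ and such subgroups of $\Heis$ contain the center). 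You instead prove $\z\subset\n$ first: assuming not, $\n$ is abelian, intersects $\a^+\oplus\z$ trivially (using surjectivity onto $\a^-$ and non-degeneracy of the pairing $\a^+\times\a^-\to\z$), hence is the graph of a symmetric $\beta$ and a linear $\zeta$ over $\a^-$, and the function $f(z_0,a)=z_0-\tfrac12\beta(a)\cdot a-\zeta(a)$ is $N$-invariant and descends to a continuous surjection $N\backslash Y\to\R$, contradicting compactness; transitivity is then the formal consequence $NA^+=NA^+Z=\Heis$. I checked the invariance of your $f$ in exponential coordinates on $\Heis$ and it does hold (the cross term $\beta(c)\cdot b$ produced by the group law cancels against the symmetric expansion of $\beta(b+c)\cdot(b+c)$), so the step you flagged as the main obstacle goes through. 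The trade-off: your proof is entirely elementary and coordinate-based, avoiding the fibration/homotopy argument and the slightly delicate verification that the $N$-action is free at every point of $Y$ (stabilizers of points other than the basepoint are $\Heis$-conjugates of $A^+$, which sit inside $A^+Z$ rather than $A^+$); the paper's proof avoids any explicit exponentiation and yields the sharper structural dichotomy ($N$ either meets $A^+$ or is a simply transitive $(n+1)$-dimensional subgroup) as a by-product. Two cosmetic points: the hypothesis for contradiction should be $\z\not\subset\n$ rather than $N\cap Z=\{0\}$ (these agree here only because $\dim\z=1$ and $N$ is connected), and ``nontriviality'' of the $K$-action in Step 1 should read ``faithfulness''.
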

\begin{proof}
The group $N$ acts also cocompactly on $Y/Z$. Let $p':K\ltimes \Heis\to A^-$  be defined by $z(a^++a^-)k\mapsto a^-$, where $k\in K$, $a^\pm\in A^\pm$ and $z$ is in the center of $\Heis$.  Then $p'$ induces a bijection from $Y/Z$ to $A^-$. Under this identification, $n\in N$ acts on $A^-$ by its projection to ${K\ltimes A^-}$. Hence the projection $\hat N$ of $N$ to $K\ltimes A^- \subset  \O(n)\ltimes \R^n$ acts cocompactly on $A^-\cong \R^n$. The group $\hat K:=p_K (\hat N)\subset \O(n)$ acts trivially on $A_0:=\hat N\cap A^-$ since $\hat N$ is nilpotent. Let $A_1$ be the orthogonal complement of $A_0$ in $A^-$ and define $\phi:\hat K \to A_1$ by $(k,\phi(k))\in\hat N$. Then $K':={\rm graph}(\phi)$ is a subgroup of $\hat N$.  Moreover, $\hat N=K'\times A_0$. Since $K'$ is compact, $A_0$ acts cocompactly on $A^-$. This implies $A_0=A^-$ and $\hat K=\{1\}$. Thus $N$ is contained in $\Heis$ and its projection to $A^-$ is surjective. If $N \cap A^+ \neq \{0\}$, then $N$ contains the center, hence its action on $Y$ is transitive. Otherwise, i.e. if $N \cap A^+ = \{0\}$, then $N$ acts freely properly on $Y$, defining a fibration 
$ N \simeq \R^k \to Y \simeq \R^{n+1} \to N \backslash Y.$
It follows from the long exact sequence of homotopy groups that all the homotopy groups of the compact manifold $N \backslash Y$ are trivial. This implies that it is a point, hence  $N$ acts transitively on $Y$ and has dimension $n+1$. We claim that $N$ contains the center. Indeed, if $N$ is not abelian, then it contains $[N,N] = Z$, the center of $\Heis$. Otherwise, $N$ is an abelian subgroup of dimension $n+1$ of $\Heis$. It necessarily contains the center, since an abelian subgroup of $\Heis$ has maximal dimension $n+1$.  
\end{proof}

\begin{lemma}\label{Lemma: N co-compact implies N_0 co-compact}
Let $N$ be a connected Lie subgroup of $G:=(\R \times K) \ltimes \Heis$ acting cocompactly on the homogeneous space $X := (\R \times K) \ltimes \Heis / K \ltimes A^+$. Then $N_0 := N \cap (K \ltimes \Heis)$ acts cocompactly on  each $(K \ltimes \Heis)$-leaf. 
\end{lemma}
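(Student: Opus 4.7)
The plan is to exploit a one-parameter subgroup of $N$ complementary to $N_0$, inherited from the projection $p\colon G\to\R$. First I observe that $p(N)=\R$: since $N$ is connected, $p(N)$ is a connected subgroup of $\R$, hence either $\{0\}$ or all of $\R$; if $p(N)=\{0\}$, then $N\subset K\ltimes\Heis$ would preserve each leaf individually, and $N\backslash X$ would fiber onto $\R$, contradicting cocompactness. Consequently, pick $\xi\in\n$ with $dp(\xi)=1$ and set $a_t:=\exp(t\xi)$; then $p(a_t)=t$, and at the set-theoretic level $N = N_0\cdot\{a_t : t\in\R\}$, since $n\cdot a_{-p(n)}\in N_0$ for every $n\in N$.

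The heart of the argument is to transfer a compact fundamental region for the $N$-action on $X$ to one for the $N_0$-action on any chosen leaf. Fix a leaf $F_\tau:=p^{-1}(\tau)$ and, using cocompactness of $N$ on $X$, pick a compact set $C\subset X$ with $NC=X$. Given $y\in F_\tau$, write $y=n\cdot c$ with $n\in N$, $c\in C$, decompose $n=n_0\,a_s$ with $n_0\in N_0$ and $s\in\R$, and read off from $p(y)=\tau$ that $s=\tau-p(c)$. Thus
\[
y = n_0\cdot\bigl(a_{\tau-p(c)}\cdot c\bigr),
\]
and the second factor lies in $F_\tau$ since $p(a_{\tau-p(c)}\cdot c)=\tau$. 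The set $C_\tau:=\{a_{\tau-p(c)}\cdot c : c\in C\}$ is compact as the continuous image of $C$ under the map $c\mapsto a_{\tau-p(c)}\cdot c$, and the computation above shows $F_\tau = N_0\cdot C_\tau$. Hence $N_0\backslash F_\tau$ is compact for every $\tau$, as required.

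I do not foresee a serious obstacle: the only minor points are the existence of the one-parameter subgroup $\{a_t\}$ (a Lie-algebra level statement that bypasses any question of splitting the Lie group extension $N_0\to N\to \R$) and the continuity of $c\mapsto a_{\tau-p(c)}\cdot c$, which is immediate from the continuity of $p$ and of the $G$-action on $X$. The conceptual content is just that $\{a_t\}$ provides a canonical flow pushing any point of $X$ into any chosen leaf, turning a compact transversal for the $N$-action on $X$ into a compact transversal for the $N_0$-action on that leaf.
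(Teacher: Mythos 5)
Your proposal is correct and follows essentially the same route as the paper: both arguments produce a one-parameter subgroup of $N$ projecting isomorphically onto the $\R$-factor, use it to write each $n\in N$ as $n_0\cdot a_{p(n)}$ with $n_0\in N_0$, and then push a compact set $C$ with $N\cdot C=X$ along this flow into the chosen leaf to obtain a compact set whose $N_0$-orbit covers that leaf. Your justification that $p(N)=\R$ (and hence that the one-parameter group exists) is slightly more explicit than the paper's, which simply asserts it from connectedness and cocompactness, but the content is identical.
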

\begin{proof}
Let $W\subset X$ be a compact set such that $N\cdot W=X$. It is sufficient to consider the ($K\ltimes \Heis$)-leaf $\F_0:=p^{-1}(0)$. Since $N$ is connected and acts cocompactly, it contains a one-parameter group $\gamma(s)$ such that $(p\circ\gamma)(s)=s$. Since $p(W)$ is compact, we may assume $p(W)\subset [-a,a]$. Now take $q\in\F_0$. We can choose an element $n\in N$ such that $nq\in W$. Note that $p(n)\in [-a,a]$. Then $n_0:=\gamma(p(n))^{-1} n\in N_0$ and 
$n_0q\in (\gamma(p(n))^{-1}\cdot W)\cap \F_0\subset W_0:=\gamma([-a,a])\cdot W \cap \F_0,$
thus $N_0\cdot W_0= \F_0$. Moreover, $\F_0$ is compact since $[-a,a]$ and $W$ are compact and and multiplication is continuous. 
\end{proof}
\begin{proposition}\label{Proposition: transitive action without lattice}
 Let $N$ be a connected nilpotent Lie subgroup of $G:=(\R \times K) \ltimes \Heis$ acting cocompactly on the homogeneous space $X := (\R \times K) \ltimes \Heis / K \ltimes A^+$. Then $N $ acts transitively. Moreover,  $N_0 := N \cap (K \ltimes \Heis)$ is contained in $\Heis$, and contains the center of Heisenberg. 
\end{proposition}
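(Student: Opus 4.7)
\emph{Plan.} The strategy is to bootstrap from the preceding proposition on cocompact connected nilpotent actions on the single leaf $Y=(K\ltimes\Heis)/(K\ltimes A^+)$, combined with the preceding lemma, which tells us that cocompactness of $N$ on $X$ descends to cocompactness of $N_0:=N\cap(K\ltimes\Heis)$ on each $(K\ltimes\Heis)$-leaf. The structural statements (containment in $\Heis$, containment of the center) will then fall out of the leaf-level proposition applied to $N_0$, while the full transitivity of $N$ on $X$ will come from combining leafwise transitivity with surjectivity of the projection $p$ to the $\R$-factor.

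First I would apply the preceding lemma to obtain that $N_0$ acts cocompactly on the leaf $p^{-1}(0)\cong Y$. To invoke the leaf-level proposition on $N_0$ itself, I need $N_0$ to be a connected, closed, nilpotent Lie subgroup of $K\ltimes\Heis$. Closedness is immediate from the definition and nilpotency is inherited from $N$; the delicate point is connectedness. Since $p|_N\colon N\to\R$ is a Lie group homomorphism, $p(N)$ is a connected subgroup of $\R$, hence either trivial or all of $\R$, and in both cases the long exact homotopy sequence of the fibration $N_0\hookrightarrow N\to p(N)$ collapses to give $\pi_0(N_0)=0$ (using $\pi_1(p(N))=0$ and $\pi_0(N)=0$). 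With connectedness in hand, the preceding proposition immediately yields $N_0\subset\Heis$, that $N_0$ contains the center of $\Heis$, and that $N_0$ acts transitively on every $(K\ltimes\Heis)$-leaf.

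For the transitivity of $N$ on $X$, I would rule out $p(N)=\{0\}$: otherwise $N\subset K\ltimes\Heis$ would preserve each leaf, and the $N$-saturate of any compact subset of $X$ would project to a bounded subset of $\R$, contradicting cocompactness on $X$. Hence $p|_N$ is surjective, and given $x,y\in X$ one picks $n\in N$ with $p(n)=p(y)-p(x)$ so that $n\cdot x$ lies in the same leaf as $y$, after which leafwise transitivity of $N_0$ finishes the argument. The only non-routine ingredient is the connectedness of $N_0$ recorded above; kernels of surjective Lie group homomorphisms can fail to be connected in general, but the $\pi_1$-triviality of the $\R$-valued target saves us here. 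Everything else is essentially bookkeeping on top of the two preceding results.
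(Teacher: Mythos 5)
Your proposal is correct and follows exactly the route the paper takes: its proof of Proposition \ref{Proposition: transitive action without lattice} is literally the one-line statement that it is a straightforward consequence of Proposition \ref{Proposition: transitive action along the Heis-leaves, without lattice} and Lemma \ref{Lemma: N co-compact implies N_0 co-compact}. You simply supply the routine details the paper leaves implicit (connectedness of $N_0$ via the fibration over $p(N)$, surjectivity of $p|_N$ from cocompactness, and the leafwise patching), all of which are sound.
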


\begin{proof} This is a straightforward consequence of Proposition \ref{Proposition: transitive action along the Heis-leaves, without lattice} and Lemma \ref{Lemma: N co-compact implies N_0 co-compact}. 
\end{proof}

\subsection{Standardness of compact quotients}

\begin{definition}[Standard quotient]\label{Definition: standard}
	Let $X = G/I$ be a homogeneous space, and $M=\Gamma \backslash	 X$ a compact quotient of $X$ by some discrete subgroup $\Gamma$ of $G$, acting properly  and freely on $X$. We say that the quotient manifold $M=\Gamma \backslash	 X$  (or $\Gamma$) is standard if (up to finite index) $\Gamma$  is contained in some connected Lie subgroup $N$ of $G$ acting properly on $X$.
\end{definition}
If $\Gamma \backslash X$ is standard, then $N$ necessarily acts cocompactly on $X$ and $\Gamma$ is a uniform lattice in $N$. In particular, $N$ is closed in $G$. Hence, the quotient $\Gamma \backslash X$ is standard if and only if $\Gamma$ admits a syndetic hull in $G$. 
\begin{definition}[Semi-standard quotient]\label{Definition: semi-standard}
	Let $X = G/I$ be a homogeneous space, and $M=\Gamma \backslash	 X$ a compact quotient of $X$ by some discrete subgroup $\Gamma$ of $G$, acting properly  and freely on $X$. We say that the quotient manifold $M=\Gamma \backslash X$ (or $\Gamma$) is semi-standard if there exists a normal Lie subgroup $G_0$ of $G$ containing $I$ such that \begin{itemize}
	    \item $\Gamma\cap G_0$ is standard for $G_0/I$.
        \item The projection of $\Gamma$ to $G/G_0$ is a lattice.
	\end{itemize}
\end{definition}
Let $\Gamma$ be a discrete subgroup of $G_{\rho}$ acting properly cocompactly on $X_{\rho}$. The following two theorems are corollaries of Sections \ref{Subsection:Definition of syndetic hull?} and \ref{Subsection:Transitivity of cocompact actions of  Lie groups}.

\begin{theorem}[Non-straight case]\label{Theorem: standarness in non-straight case}
	Any non-straight compact quotient of $X_{\rho}$ is standard. More precisely,  let $\Gamma$ be a non-straight discrete subgroup of $G_{\rho}$ acting properly and cocompactly on $X_{\rho}$. Then, up to finite index, $\Gamma$ has  a nilpotent syndetic hull $N$ in $G_{\rho}$. Moreover :
 \begin{itemize}
     \item $N$ acts simply transitively on $X_{\rho}$,
     \item $N = \R \ltimes N_0$, where $\R$ is a $1$-parameter group of $N$ with non-trivial projection to the $\R$-factor in $G_{\rho}$, and $N_0 := N \cap (K \ltimes \Heis)$. Moreover, $N_0 \subset \Heis$, and  contains the center of Heisenberg. 
 \end{itemize}
\end{theorem}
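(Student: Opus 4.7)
The plan is to assemble three results already at hand: the existence of a nilpotent syndetic hull given by Proposition \ref{Proposition: syndetic hull for non-straight Gamma}, the transitivity statement of Proposition \ref{Proposition: transitive action without lattice}, and the absence of nontrivial compact subgroups in a simply connected nilpotent Lie group.

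First, I apply Proposition \ref{Proposition: syndetic hull for non-straight Gamma} to $\Gamma$: after replacing $\Gamma$ by a finite index subgroup, this furnishes a connected closed nilpotent Lie subgroup $N\subset G_\rho$ in which $\Gamma$ sits as a uniform lattice, and moreover $N$ inherits from $\Gamma$ a proper cocompact action on $X_\rho$.

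To unravel the structure of $N$, I use non-straightness. Since $\Gamma$ is cocompact in $N$, one has $\overline{p(\Gamma)}\subset p(N)$; because $p(\Gamma)$ is non-discrete in $\R$ and $p(N)$ is a connected Lie subgroup of $\R$, this forces $p(N)=\R$. Thus $p|_N\colon N\to\R$ is a surjective Lie group homomorphism with kernel $N_0:=N\cap(K\ltimes\Heis)$, and the long exact homotopy sequence of the resulting fibration forces $N_0$ to be connected (since $\pi_1(\R)=\pi_0(\R)=0$). Picking an element $X$ in the Lie algebra of $N$ with $dp(X)=1$ and setting $s(t)=\exp(tX)$ splits this sequence as $N=\R\ltimes N_0$, with the $\R$-factor projecting isomorphically onto the $\R$-factor of $G_\rho$. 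Proposition \ref{Proposition: transitive action without lattice} then yields that $N$ acts transitively on $X_\rho$, that $N_0\subset\Heis$, and that $N_0$ contains the center of $\Heis$.

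The last task is to promote transitivity to simple transitivity. Since $N_0$ is a connected Lie subgroup of the simply connected nilpotent group $\Heis$, it is itself simply connected nilpotent; therefore $N\cong\R\ltimes N_0$ is a simply connected nilpotent Lie group. The exponential map of such a group is a global diffeomorphism, so it contains no nontrivial compact subgroup. Since the $N$-action on $X_\rho$ is proper, every point stabilizer is compact and hence trivial, whence the transitive action is simply transitive. The only mildly delicate point is ensuring that properness of the $N$-action is genuinely inherited from that of $\Gamma$; this is exactly the equivalence stated in Proposition \ref{Proposition: syndetic hull for non-straight Gamma}.
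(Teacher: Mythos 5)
Your proposal is correct and follows exactly the route the paper intends: the theorem is presented there as a corollary of Proposition \ref{Proposition: syndetic hull for non-straight Gamma} (existence of the nilpotent syndetic hull and the equivalence of properness/cocompactness between $\Gamma$ and $N$) and Proposition \ref{Proposition: transitive action without lattice} (transitivity, $N_0\subset\Heis$, and containment of the center). Your added details --- that non-straightness forces $p(N)=\R$ and hence the splitting $N=\R\ltimes N_0$, and that simple transitivity follows since a proper action of a simply connected nilpotent group has trivial (compact) stabilizers --- are exactly the bookkeeping the paper leaves implicit.
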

\begin{theorem}[Straight case]\label{Theorem: standarness in straight case}
	Any straight compact quotient of $X_{\rho}$ is semi-standard. More precisely, 
 let $\Gamma$ be a straight discrete subgroup of $G_{\rho}$ acting properly and cocompactly on $X_{\rho}$ (by Theorem  $\ref{Theorem Gamma} (2)$, up to finite index,  $\Gamma=\Z \ltimes \Gamma_0$, with $\Gamma_0:= \Gamma \cap \Heis$). 
 Then $\Gamma_0$ has a syndetic hull $N_0$ in $\Heis$. Moreover, $N_0$ acts transitively on the $\Heis$-leaves, and contains the center of Heisenberg.
\end{theorem}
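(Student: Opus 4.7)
The plan is to combine the syndetic hull from Proposition \ref{Proposition: partial syndetic hull for straight Gamma} with the transitivity structure theorem Proposition \ref{Proposition: transitive action along the Heis-leaves, without lattice}, linking the two by a cocompactness statement on a single $(K\ltimes \Heis)$-leaf. First, I would invoke Theorem \ref{Theorem Gamma}(2) to reduce, up to finite index, to the form $\Gamma=\langle\hat\gamma\rangle\ltimes\Gamma_0$ with $\Gamma_0=\Gamma\cap\Heis$ discrete. Proposition \ref{Proposition: partial syndetic hull for straight Gamma} then supplies a connected nilpotent syndetic hull $N_0\subset \Heis$ of $\Gamma_0$, automatically closed in $\Heis$ (hence in $G_\rho$) and $\hat\gamma$-invariant. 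This already proves the first assertion of the theorem.

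Next I would show that $N_0$ acts cocompactly on the leaf $Y:=(K\ltimes \Heis)/(K\ltimes A^+)$ of the foliation $\F$. Because $\Gamma$ is straight, $p(\Gamma)\subset\R$ is discrete, so the closed $1$-form on $\Gamma\backslash X_\rho$ induced by $p$ has discrete periods, and each $\F$-leaf is closed in the compact manifold $\Gamma\backslash X_\rho$ and therefore compact (as recorded in Remark \ref{Remark: Heis-leaves have unimodular lightlike structure}). Identifying the leaf through the base point with $\Gamma_0\backslash Y$, this exhibits $\Gamma_0$ as acting cocompactly on $Y$; combining with the fact that $\Gamma_0\backslash N_0$ is compact yields that $N_0$ itself acts cocompactly on $Y$.

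Finally, $N_0$ is a connected nilpotent Lie subgroup of $K\ltimes\Heis$ that acts cocompactly on $Y$, so Proposition \ref{Proposition: transitive action along the Heis-leaves, without lattice} applies verbatim and delivers both the transitivity of the $N_0$-action on $Y$ and the inclusion of the center of $\Heis$ in $N_0$. The main technical point is the passage from cocompactness of $\Gamma$ on $X_\rho$ to cocompactness of $\Gamma_0$ on a single leaf; this is where straightness is essential, since otherwise the leaves of $\F$ may fail to be closed in the quotient and the argument would collapse. Everything else is a direct citation of results from Subsections \ref{Subsection:Definition of syndetic hull?} and \ref{Subsection:Transitivity of cocompact actions of  Lie groups}.
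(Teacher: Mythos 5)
Your proof is correct and follows the same route the paper intends: the paper derives this theorem as a corollary of Proposition \ref{Proposition: partial syndetic hull for straight Gamma} and Proposition \ref{Proposition: transitive action along the Heis-leaves, without lattice}, exactly as you do. Your explicit argument that $N_0$ acts cocompactly on a leaf (via closedness, hence compactness, of the $\F$-leaves of the straight quotient) correctly supplies the bridging step the paper leaves implicit --- note that Lemma \ref{Lemma: N co-compact implies N_0 co-compact} is stated only for \emph{connected} subgroups and so does not apply verbatim to the $\Z$-syndetic hull, which is precisely the point you address.
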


We want to study under which conditions a straight compact quotient is even standard. We proceed similarly to \cite[Prop.\,8.18]{KO}. Let us first recall the following notation. The Heisenberg group is written as $\Heis = Z \cdot A$, where $Z=\R$, and  $A:=\C^n=A^+\oplus A^-$. Any connected subgroup $N_0$ of $\Heis$ containing the center can be written as $N_0=Z\cdot A'$ for a subspace $A'\subset A$. Let $(\R\oplus {\mathfrak k})\ltimes \heis$ be the Lie algebra of $G_\rho$. As described in Section \ref{Section: Preliminary facts}, we have $\rho(t)=e^{t{\bf L}}$ for some derivation ${\bf L}$ of $\heis$ that preserves $A$.

\begin{proposition} \label{Proposition : semi/standard}
Let $\Gamma=\langle \hat \gamma \rangle \ltimes \Gamma_0$, $\Gamma_0\subset \Heis$, be a straight discrete subgroup of $G_{\rho}$ acting properly and cocompactly on $X_{\rho}$. Let $N_0=Z\cdot A'\subset \Heis$ be the Malcev closure of $\Gamma_0$ in $\Heis$. Then $\Gamma\backslash X_\rho$ is standard if and only if there are elements $(1,\phi)\in \R\times {\mathfrak k}$, $X\in {\mathfrak n}_0:={\rm Lie}(N_0)$, $\hat t\in\R\setminus\{0\}$, and $\hat n\in N_0$ such that $\hat \gamma = \hat n \exp\big(\hat t\,(1,\phi,X)\big)$ and $A'$ is invariant under ${\bf L}+\phi$.
\end{proposition}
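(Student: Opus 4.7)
The plan is to realize standardness of $\Gamma$ as the existence of a one-parameter extension of $N_0$ inside $G_\rho$ containing $\hat\gamma$. A key observation used throughout is that $\hat\gamma$ normalizes $\Gamma_0$, hence its Malcev closure $N_0$; consequently the relevant structural questions take place in the normalizer of $N_0$ inside $G_\rho$.

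For $(\Leftarrow)$, given data $(\phi,X,\hat t,\hat n)$ as in the statement, I would set $Y:=(1,\phi,X)\in \mathfrak{g}_\rho$ and $N:=\exp(\R Y)\cdot N_0$. The subalgebra condition $[Y,\mathfrak n_0]\subset\mathfrak n_0$ reduces, for $Z\in \mathfrak n_0$, to $({\bf L}+\phi)(Z)+[X,Z]\in \mathfrak n_0$: the first summand lies in $\mathfrak n_0$ by the invariance of $A'$ together with the automatic invariance of $\mathfrak z$, and the second lies in $[\heis,\heis]=\mathfrak z\subset\mathfrak n_0$ since $\Heis$ is two-step nilpotent. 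Hence $N$ is a closed connected Lie subgroup of $G_\rho$ with $N_0\triangleleft N$ and $N/N_0\cong \R$. Because $X\in \mathfrak n_0$ and $\mathfrak n_0$ is $({\bf L}+\phi)$-invariant, the $\Heis$-component of $\exp(\hat t Y)$ remains in $N_0$, so $\hat\gamma=\hat n\exp(\hat t Y)$ lies in $N$, giving $\Gamma\subset N$. Cocompactness of $\Gamma\backslash N$ follows from the fibration $\Gamma_0\backslash N_0\hookrightarrow \Gamma\backslash N\twoheadrightarrow \Z\backslash\R\cong S^1$, with compact fibre by Malcev and compact base; the proper action of the uniform lattice $\Gamma$ then passes to $N$ via the transitivity result of Proposition~\ref{Proposition-Appendix A: transitive action under existence of a lattice}.

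For $(\Rightarrow)$, let $N\supset \Gamma$ be a syndetic hull. Since $N$ is closed connected in $G_\rho$ and contains $\Gamma_0$, and $N_0$ is the minimal closed connected subgroup of $\Heis\subset G_\rho$ containing $\Gamma_0$, we obtain $N_0\subset N$ (after passing to finite index). The quotient $N/N_0$ is a connected Lie group with uniform lattice $\Gamma/\Gamma_0\cong\Z$, hence $N/N_0\cong \R$, which forces $\mathfrak n=\mathfrak n_0\oplus\R Y$ for some $Y\in \mathfrak n\setminus\mathfrak n_0$. Straightness gives $p(N)=\R$, so I normalize $Y=(1,\phi,X)$ with $\phi\in \mathfrak k$ and $X\in \heis$. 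The condition $[Y,\mathfrak n_0]\subset \mathfrak n_0$ then yields $({\bf L}+\phi)\mathfrak a'\subset\mathfrak a'$. Writing $\hat\gamma\in N=\exp(\R Y)\cdot N_0$ via the projection to $N/N_0\cong \R$ produces $\hat\gamma=\hat n\exp(\hat t Y)$ with $\hat n\in N_0$ and $\hat t=p(\hat\gamma)\in \R\setminus\{0\}$. It remains to arrange $X\in \mathfrak n_0$ using the freedom to translate $Y$ by an element of $\mathfrak n_0$.

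The main obstacle is this last step, arranging $X\in \mathfrak n_0$, since the $\heis$-component of $Y$ is only determined modulo $\mathfrak n_0$ and the intrinsic class in $\heis/\mathfrak n_0\cong \mathfrak a^+$ needs to be shown to vanish. I expect the argument to exploit that properness of the $N$-action in the straight case forces $\dim \mathfrak a'=n$, so $\mathfrak a'$ is a graph over $\mathfrak a^-$ and $N$ acts simply transitively on $X_\rho$; a direct comparison between $\hat\gamma$ and elements of $\exp(\R(1,\phi,0))\cdot N_0$, using the two-step nilpotent BCH formula in $\Heis$ (where all quadratic corrections live in $\mathfrak z\subset\mathfrak n_0$), then shows that $\hat\gamma$'s $\Heis$-part differs from that of $\exp(\hat t(1,\phi,0))$ only by an element of $N_0$, permitting the choice $X\in \mathfrak n_0$ with the corresponding $\hat n\in N_0$.
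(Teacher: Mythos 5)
Your overall strategy for the ``only if'' direction (extract a one-parameter group $(1,\phi)$ from the syndetic hull, deduce invariance of $A'$ from the fact that the hull normalizes $N_0$, then express $\hat\gamma$) is the same as the paper's, but there is a genuine gap at the very first structural step: the assertion that $N_0\subset N$ ``by minimality of $N_0$''. Minimality of $N_0$ among \emph{connected subgroups of $\Heis$} containing $\Gamma_0$ does not give this, because $N\cap\Heis$ need not be connected and its identity component need not contain $\Gamma_0$ (the projection of $N\cap(K\ltimes\Heis)$ to $K$ can be a torus, so $N\cap\Heis$ can be disconnected). The paper avoids claiming $N_0\subset S$ altogether: it introduces the hull $\tilde N_0$ of the (possibly disconnected) closed group $S\cap\Heis$ via Raghunathan's Prop.\ 2.5, and proves $\tilde N_0=N_0$ by a genuinely geometric argument --- $N_0\subset\tilde N_0$ by uniqueness of Malcev hulls, and $\tilde N_0$ is \emph{not larger} because $\tilde N_0$ inherits properness of the action on $\Heis/A^+$ from $S\cap\Heis$, forcing $\tilde A\cap A^+=0$, while transitivity of $N_0$ on the leaves gives $A'+A^+=A$. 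This properness input is entirely absent from your argument, and without it you cannot identify the $\Heis$-part of $N$ with $N_0$. The subsequent claim ``$N/N_0\cong\R$ since $\Z$ is a uniform lattice'' is also too quick ($\Z$ is a uniform lattice in $\R\times T$ for any torus $T$); the paper instead uses compactness of $K$ to find a one-parameter group $c(t)\subset r(S)$ through $r(\hat\gamma)$. Finally, the step you yourself flag as ``the main obstacle'' --- arranging $X\in\mathfrak n_0$ --- is left as a sketch; in the paper this is where the containment $S\subset r(S)\ltimes N_0$ does the work: once one knows $\hat\gamma\in r(S)\ltimes N_0$ and that $\exp\bigl(\hat t(1,\phi,X)\bigr)$ has the same $r$-projection as $\hat\gamma$ for \emph{any} $X\in\mathfrak n_0$, the discrepancy automatically lies in $\ker r\cap(r(S)\ltimes N_0)=N_0$; no BCH computation in $\Heis$ is needed.

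Your ``if'' direction is essentially correct and more detailed than the paper's one-line dismissal: the verification that $\R(1,\phi,X)\oplus\mathfrak n_0$ is a subalgebra using two-step nilpotency, and the fibration over $S^1$ for cocompactness, are fine. One correction: properness of the $N$-action does not come from Proposition \ref{Proposition-Appendix A: transitive action under existence of a lattice} (which deduces transitivity \emph{from} properness); it comes from the standard fact that a closed subgroup acts properly on $G/I$ if and only if a uniform lattice in it does.
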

\begin{proof}
The ``if'' part is clear. We show the ``only if'' part. Let $S$ be a syndetic hull of $\Gamma$. The subgroup $r(S)\subset \R \times K$ is connected. Since $K$ is compact, there is a one parameter subgroup $c(t)$, $t\in\R$, of $r(S)$ containing $r(\hat \gamma)$. Let $\phi\in \mathfrak k$ be such that $c(t)=(t,e^{t\phi})$ and $\hat t\in\R$ be such that $c(\hat t)=r(\hat \gamma)$.  Let $\tilde N_0$ be the unique connected subgroup of $\Heis$ for which $(S\cap \Heis)\backslash \tilde N_0$ is compact, see \cite[Prop.\,2.5.]{raghunathan1972discrete}. Because $S\cap \Heis$ acts properly on $\Heis/ A^+$, also $\tilde N_0$ acts properly on $\Heis/ A^+$. 
Since $\Gamma_0\subset S\cap\Heis\subset \tilde N_0$ and $\tilde N_0$ is nilpotent, $\Gamma_0$ admits a unique Malcev hull in $\tilde N_0$, which is also a Malcev hull in $\Heis$. On the other hand, $N_0$ is a Malcev hull of $\Gamma_0$ in $\Heis$, which implies $N_0\subset \tilde N_0$. Moreover, $\tilde N_0$ cannot be larger than $N_0$. Indeed, $A'+A^+ =A$ since $N_0$ acts transitively on the Heisenberg leaves. On the other hand, we have $\tilde N_0=Z\cdot \tilde A$, where $A'\subset \tilde A\subset A$, and $\tilde N_0$ acts properly on $\Heis/ A^+$.  In particular,  $\tilde A\cap A^+=0$, thus $A'=\tilde A$. We obtain $N_0=\tilde N_0$. Since the subgroup $S\cap \Heis$ is normal in $S$, also $N_0=\tilde N_0$ is normalized by $S$. Consequently, $A'$ is invariant under ${\bf L}+\phi$. Since $c(t)$ is contained in $r(S)$, the vector $(1,\phi)$ is in the projection of the Lie algebra of $r(S)\ltimes N_0$ to $\R\times{\mathfrak k}$. Choose $X\in {\mathfrak n}_0$ such that $(1,\phi,X)$ is in the Lie algebra of $r(S)\ltimes N_0$. Then the projection of $\exp\big(\hat t\,(1,\phi,X)\big)$ equals $r(\hat \gamma)$. Since $\hat \gamma$ belongs to $S\subset r(S)\ltimes N_0$, it differs from $\exp\big(\hat t\,(1,\phi,X)\big)$ by an element of $N_0$. 
\end{proof}
As a consequence of Proposition \ref{Proposition : semi/standard}, we obtain that in the standard case, there is a syndetic hull which is an extension of $\R$ by a subgroup of $\Heis$. 
\begin{cor}
    Let $\Gamma=\langle \hat \gamma \rangle \ltimes \Gamma_0$, $\Gamma_0\subset \Heis$, be a straight discrete subgroup of $G_{\rho}$ acting properly and cocompactly on $X_{\rho}$. Let $N_0$ be the Malcev closure of $\Gamma_0$ in $\Heis$. If $\Gamma \backslash X_{\rho}$ is standard, then there is a syndetic hull of $\Gamma$ in $G_{\rho}$ of the form $\R\ltimes N_0$, where $\R$ is a $1$-parameter group of $\R\times K \subset G_{\rho}$  with non-trivial projection to the $\R$-factor.   
\end{cor}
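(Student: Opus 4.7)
The plan is to read off the syndetic hull directly from the data produced by Proposition \ref{Proposition : semi/standard}. That proposition furnishes elements $(1,\phi)\in\R\times \mathfrak{k}$, $X\in\mathfrak{n}_0$, $\hat t\in\R\setminus\{0\}$, and $\hat n\in N_0$ such that $\hat\gamma=\hat n\exp\bigl(\hat t\,(1,\phi,X)\bigr)$, and, crucially, $A'$ invariant under ${\bf L}+\phi$. My candidate syndetic hull is
\[
N:=c(\R)\ltimes N_0,\qquad c(t):=(t,e^{t\phi})\in\R\times K,
\]
which by construction is of the required form, with $c(\R)$ projecting non-trivially to the $\R$-factor of $G_\rho$.

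First I would check that $N$ makes sense as a closed connected Lie subgroup of $G_\rho$. The invariance of $A'$ under ${\bf L}+\phi$ is exactly what is needed for $c(\R)$ to normalize $N_0=Z\cdot A'$ inside $G_\rho$: at the Lie algebra level, the adjoint action of $(1,\phi)$ on $\heis$ restricts to $A$ as ${\bf L}+\phi$, so $\R(1,\phi)\oplus\mathfrak{n}_0$ is a Lie subalgebra of $(\R\oplus\mathfrak{k})\ltimes\heis$ integrating to $N$. Closedness follows because $c(\R)$ is closed in $\R\times K$ (by continuity of $t\mapsto e^{t\phi}$, any limit point is automatically of the form $(t_0,e^{t_0\phi})$) and $N_0$ is closed in $\Heis$.

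Next I would verify $\Gamma\subset N$ and cocompactness of $\Gamma\backslash N$. The inclusion is immediate: $\Gamma_0\subset N_0\subset N$; and since $(1,\phi,X)\in\mathrm{Lie}(N)$, we get $\exp\bigl(\hat t\,(1,\phi,X)\bigr)\in N$, hence $\hat\gamma=\hat n\cdot\exp\bigl(\hat t\,(1,\phi,X)\bigr)\in N$. For cocompactness, use the short exact sequence $1\to N_0\to N\to\R\to 1$: the projection sends $\Gamma$ onto $\hat t\,\Z$, a cocompact subgroup of $\R$ (since $\hat t\neq 0$), with kernel $\Gamma\cap N_0\supset \Gamma_0$, and $\Gamma_0$ is cocompact in $N_0$ by definition of the Malcev closure. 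Combining the two cocompactness statements yields that $\Gamma$ is cocompact in $N$.

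Since the existence of the required data is already guaranteed by Proposition \ref{Proposition : semi/standard}, there is no substantial obstacle: the only subtle points are confirming that $c(\R)$ is closed in $\R\times K$ and that the semidirect product $c(\R)\ltimes N_0$ is genuinely realized inside $G_\rho$, both of which reduce to the elementary observations above.
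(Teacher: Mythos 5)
Your proposal is correct and matches the paper's intent: the paper's proof of this corollary is a one-line reference to the one-parameter group $c(t)=(t,e^{t\phi})$ from the proof of Proposition \ref{Proposition : semi/standard}, and the syndetic hull meant there is exactly your $c(\R)\ltimes N_0$. You simply make explicit the verifications (that the invariance of $A'$ under ${\bf L}+\phi$ makes $N$ a closed connected subgroup, that $\hat\gamma\in N$, and that $\Gamma$ is cocompact in $N$ via the exact sequence $1\to N_0\to N\to\R\to 1$) which the paper leaves implicit.
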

\begin{proof}
    The $1$-parameter group is given by $c(t)$ in the proof of the previous proposition. 
\end{proof}

\section{Non-standard phenomena}\label{Section: Non Standard}

In this section, we try to understand the non-standard compact quotients $\Gamma \backslash X_{\rho}$ of plane waves $X_{\rho}=G_{\rho}/I$. In this case, the fundamental group $\Gamma$ is necessarily straight: in Example \ref{Example: non standard C-W}, we give an explicit example of such a situation. 
The main motivation here is to see whether $\Gamma$ is standard up to embedding $G_{\rho}$ in some bigger group.
\bigskip\,

Let $\Gamma$ be a straight discrete subgroup of $G_{\rho}$ acting properly and cocompactly on $X_{\rho}$, then up to finite index, $\Gamma = \langle \hat{\gamma} \rangle \ltimes \Gamma_0$, with $\Gamma_0:=\Gamma \cap \Heis$. Let $N_0$ be the Malcev closure of $\Gamma_0$ in $\Heis$. There are two cases:\\\\
\textbf{Case 1:}  $\hat{\gamma}$ is not contained in a $1$-parameter group of $G_{\rho}$.\\
Non-existence of a $1$-parameter group containing $\hat{\gamma}$ does not imply non-standard\-ness. This is already suggested by Proposition \ref{Proposition : semi/standard}. Indeed, it is proved there that in the standard case, $n_0^{-1} \hat{\gamma}$ is contained in a $1$-parameter group of $G_{\rho}$ for some $n_0 \in N_0$, but not a priori $\hat{\gamma}$. Here, we give an example where this occurs.

\begin{example}
Let $\widetilde{\Euc}_2 :=\R\ltimes \R^{2}$,  where $\R$ acts by rotations on $\R^2$. Let  $G_{\rho}=(\R \times \SO(2))\ltimes \Heis_5$ be the special polarized Poincar\'e group in dimension $4$ (recall that this is the subgroup of $\Isom(\Mink^{1,3})$ preserving a lightlike vector). Let $c(t)=(t, e^{it})$ be a $1$-parameter group in $\R \times \SO(2)$. Since it preserves $A^- \simeq \R^2$, one can consider $S_1:= c(t) \ltimes A^-$, which is isomorphic to $\widetilde{\Euc}_2$. 
Define $S:= S_1 \times \R$, where $\R$ is the center of $\Heis_5$. Clearly $S$ acts properly cocompactly on $X_{\rho}=\Mink^{1,3}$.
Moreover, $S$ contains straight lattices which are not contained in the image of the exponential map. Namely, take any lattice $\Gamma_1$ in the abelian subgroup $2\pi\Z \times \R^2 \subset \widetilde{\Euc}_2$ which does not intersect the $2 \pi \Z$-factor, and consider $\Gamma= \Gamma_1 \times \Z$. \\
\end{example}
\noindent\textbf{Case 2:} $\hat{\gamma}$ is contained in a $1$-parameter group of $G_{\rho}$. \\\\
When $\Gamma$ is non-standard, we ask the following question:\\

\textbf{Question:}
Can we find a compact group $C$ acting on $\Heis$, and construct an embedding $G\hookrightarrow \widehat G := (\R \times C \times K) \ltimes \Heis$, and a $G$-equivariant embedding $X\hookrightarrow \widehat X=\widehat G/ \widehat I$ into a homogeneous space,  such that the quotient $\Gamma\backslash \widehat X$ is standard? Here, $\widehat{I}:= (C' \times K) \ltimes A^+$, where $C'$ is a closed subgroup of $C$ preserving $A^+$. In particular, we obtain the semi-standard quotient $\Gamma\backslash X$ as a submanifold of the standard quotient $\Gamma\backslash \widehat X$. 

The answer to this question is provided in  the following theorem.

\begin{theorem}\label{Theorem: non-standard phenomena}
Let $G_{\rho}= (\R \times K) \ltimes_{\rho} \Heis$, $I=K \ltimes A^+$, and $X_{\rho}= G_{\rho}/ I$. Let $\Gamma=\langle \hat{\gamma} \rangle \ltimes \Gamma_0$ be a non-standard straight discrete subgroup acting properly and cocompactly on $X_{\rho}$.
If $\hat{\gamma}$ is contained in a $1$-parameter group, there is a $\widehat G$-homogeneous space $\widehat X$, which is a torus bundle over $X_{\rho}$, such that $\Gamma$ lifts to a standard discrete subgroup of $\widehat G$. 
\end{theorem}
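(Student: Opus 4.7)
The plan is to absorb the obstruction identified in Proposition~\ref{Proposition : semi/standard}---namely, that ${\bf L}+\phi$ fails to stabilize $A'$ as a derivation for any admissible $\phi\in\mathfrak{k}$, even though $e^{\hat t({\bf L}+\phi_0)}$ does stabilize $A'$---into an additional compact torus direction. First I would write $\hat\gamma=\exp(\hat t\,\xi)$ with $\xi=(1,\phi_0,X_0)\in\R\oplus\mathfrak{k}\oplus\heis$, and set $M:={\bf L}+\phi_0$ as a derivation of $\heis$ preserving $A$. Because $\R$ and $K$ are direct factors of $G_\rho$, the derivations ${\bf L}$ and $\phi_0$ commute; hence the real Jordan decomposition $M=M_{\R}+M_{\rm ell}+M_n$ splits into three pairwise commuting summands, each commuting with ${\bf L}$. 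The identity $\hat\gamma\,\Gamma_0\,\hat\gamma^{-1}=\Gamma_0$, together with the density of $\Gamma_0\cap\Heis$ in $N_0=Z\cdot A'$, forces $e^{\hat tM}(A')=A'$.

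Next I would define $C$ to be the closure in $\Aut(\heis)$ of the one-parameter group $\{e^{-tM_{\rm ell}}:t\in\R\}$, extended trivially to the center of $\heis$; this is a compact torus whose Lie algebra $\mathfrak{c}$ contains $-M_{\rm ell}$. By construction $C$ centralizes the $\R$-action. To ensure that $C$ commutes with the $K$-action as well---so that $\R\times C\times K$ is a well-defined direct product acting on $\Heis$---the construction may first require restricting $K$ to the centralizer of $\phi_0$, possibly at the cost of passing to a finite cover. With $C$ so defined, set $\widehat G:=(\R\times C\times K)\ltimes\Heis$ and $\widehat I:=(C'\times K)\ltimes A^+$ with $C':=C\cap\Stab(A^+)$. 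Quotienting by the left $C$-action identifies $C\backslash\widehat X\simeq X_\rho$ and exhibits $\widehat X\to X_\rho$ as a $C/C'$-bundle with torus fiber, while the equivariant embedding $X_\rho\hookrightarrow\widehat X$ is induced by $G_\rho\hookrightarrow\widehat G$.

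To conclude, I would apply the analog of Proposition~\ref{Proposition : semi/standard} to $\widehat G$: one writes $\hat\gamma=\exp\bigl(\hat t(1,\phi_0-M_{\rm ell},X_0)\bigr)$ in $\widehat G$ modulo an element of $N_0$; the required quantization $e^{-\hat tM_{\rm ell}}=1_C$ follows from $e^{\hat tM}(A')=A'$ via the commuting product $e^{\hat tM}=e^{\hat t(M_{\R}+M_n)}\cdot e^{\hat tM_{\rm ell}}$ together with an analysis of the closure of $\langle e^{\hat tM}\rangle$ in $\GL(A)$. The infinitesimal action of this one-parameter group on $A$ is then $M_{\R}+M_n$, which one must show preserves $A'$ as a derivation: this is the main algebraic point and follows from the fact that the $\GL(A)$-closure of $\langle e^{\hat tM}\rangle$ preserves $A'$ and factors compatibly through its real-plus-nilpotent and elliptic components. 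The hardest parts will be exactly these two: the $K$-compatibility of the $C$-action (requiring passage to a subgroup of $K$ when $\phi_0$ is not $\mathfrak{k}$-central), and the Jordan-theoretic upgrade from discrete-time invariance of $A'$ under $e^{\hat tM}$ to infinitesimal invariance under $M_{\R}+M_n$.
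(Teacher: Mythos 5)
Your overall strategy coincides with the paper's: decompose the holonomy of $\hat\gamma$ on $\heis$ into commuting hyperbolic, elliptic and unipotent parts, observe (via the Zariski closure of $\langle e^{\hat t M}\rangle$ --- this is exactly Lemma \ref{Lemma: elliptic part of l(t)}) that the hyperbolic and unipotent one-parameter groups preserve $N_0$ for \emph{all} times, adjoin the compact torus generated by the elliptic part to form $\widehat G$, and use the twisted diagonal to produce a syndetic hull. Your point about first shrinking $K$ to the closure of $\{e^{t\phi_0}\}$ so that the new torus commutes with everything is legitimate (the paper is silent on it) and harmless, since $\Gamma$ already lies in the corresponding subgroup of $G_\rho$ and $X_\rho$ is unchanged.

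The genuine gap is the ``quantization'' step: you assert that $e^{-\hat t M_{\rm ell}}=1_C$ follows from $e^{\hat tM}(A')=A'$. It does not, and it is false in general. From $e^{\hat tM}(A')=A'$ together with the invariance of $A'$ under $e^{s(M_{\R}+M_n)}$ for all $s$, you only obtain that $e^{\hat tM_{\rm ell}}$ \emph{preserves} $A'$; a nontrivial elliptic element can perfectly well do that (already $-{\rm id}_A$ preserves every $A'$ --- replace $\hat\gamma=(2\pi,1,0)$ by $(\pi,1,0)$ in Example \ref{Example: non standard C-W} to get a non-standard quotient with $e^{\hat tM_{\rm ell}}=-I\neq 1$; and when the closure of $\langle e^{\hat t M_{\rm ell}}\rangle$ is a positive-dimensional torus, no power of $\hat\gamma$ repairs this). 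Since $d(\hat\gamma)$ has trivial $C$-component while $\exp_{\widehat G}\bigl(\hat t(1,\phi_0-M_{\rm ell},X_0)\bigr)$ has $C$-component $e^{-\hat tM_{\rm ell}}$, and no element of $N_0\subset\Heis$ can absorb that discrepancy, the criterion of the analogue of Proposition \ref{Proposition : semi/standard} is simply not met as you set it up. The fix --- and this is what the paper's construction actually delivers --- is that the discrepancy need not vanish: the closed subgroup $T_0:=\overline{\langle e^{\hat tM_{\rm ell}}\rangle}\subset C$ consists entirely of automorphisms preserving $N_0$ (preservation of $N_0$ is a closed condition and holds on the generator), so one takes the syndetic hull to be $\bigl(\sigma(\R)\cdot T_0\bigr)\ltimes N_0$ rather than $\sigma(\R)\ltimes N_0$; after passing to a finite-index subgroup so that $T_0$ is connected, this is a connected subgroup of $\widehat G$ containing $d(\Gamma)$ and acting properly, which is all that standardness requires. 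Two smaller corrections: $\Gamma_0$ is a lattice in $N_0$, not dense in it --- the invariance $e^{\hat tM}(N_0)=N_0$ comes from uniqueness of the Malcev hull; and you should fold the $\Heis$-component $X_0$ of $\hat\gamma$ into the generator of your one-parameter group, since nothing places it in $\n_0$.
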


In order to explain the construction of $\widehat X$ let us look first at the following non-standard quotient of a Cahen-Wallach space:

\begin{example}[\bf A non-standard Example]\label{Example: non standard C-W}
Let $K := \SO(2)$ act on $\Heis_5 = \R \times \C^2$ trivially on the center and by the standard representation on $A^+$ and $A^-$.  Furthermore, let $\R$ act on $\Heis_5$  by  $t \cdot (v,z_1,z_2)=(v,e^{it} z_1, e^{it} z_2)$ and consider $G_{\rho}= (\R \times K)\ltimes_{\rho} \Heis_5$.
Take $\alpha\in\R^*$ and define a three-dimensional subgroup  $N:=\R\cdot A'$, where $A':=\Span_{\R}\{c_1:=(1,\alpha i),\, c_2:=(-\alpha i,1)\}\subset \Heis_5$.  Then $N$ acts properly on $X_{\rho}=G_{\rho} / (K \ltimes A^+)$ and cocompactly on the $\Heis$-leaves.
Take a lattice $\Lambda$ in it and define $\Gamma := \langle \hat{\gamma} \rangle \times \Lambda$, with $\hat{\gamma}=(2 \pi, 1, 0)$. Then $\Gamma$ is a discrete subgroup of $G_{\rho}$ acting properly and cocompactly on $X_{\rho}$. Indeed, 
by \cite[Proposition 4.8]{KO} it suffices to show that $e^{it}A'\cap A^+=\{0\}$ for all $t\in\R$. Since $A^+=\R^2$, this is satisfied if and only if the linear equation $ \Im (r_2 e^{it}c_1+ r_2  e^{it}c_2)=0$ for $r_1,r_2\in\R$ admits only the trivial solution. Thus the assertion is equivalent to  $\det( \Im (e^{it}c_1,e^{it}c_2))=1+(\alpha^2-1)\cos^2t\not=0$ for all $t$, which is obviously satisfied.  
We want to show that it is non-standard for $\alpha \neq \pm1$. By Proposition \ref{Proposition : semi/standard}, it suffices to show that $A'$ is not invariant under ${\bf L}+\phi$ for any $\phi\in{\mathfrak k}$. Since ${\bf L}|_A$ equals multiplication by $i$ and $K=\SO(2)$ preserves $A'$, the condition $({\bf L}+\phi)A'\subset A'$ would give $\Span_{\R}\{c_1,\, c_2\}=\Span_{\R}\{i c_1,\, ic_2\}$, which implies $\alpha=\pm1$.
\end{example}

\paragraph{ \textbf{How to make Example \ref{Example: non standard C-W} standard}}  If one defines $\widehat{G} := (\R \times \mathbb{S}^1 \times \SO(2)) \ltimes \Heis_5$, where $\R$ acts like before, and $\mathbb{S}^1$ acts like $\R / \ker \rho$, then the diagonal $1$-parameter group $ D:=(t, e^{-i t})$ in $\R \times \mathbb{S}^1$ acts trivially on $\Heis_5$, hence preserves $N$. The  new homogeneous space $\widehat{X}=\widehat{G} /( \SO(2) \ltimes A^+)$ is a plane wave which is an $\mathbb{S}^1$-bundle over $X_{\rho}$. We have a natural embedding by a diagonal morphism map $d: G \to \widehat{G}$, $d(t,k,h)=(t,0,k,h)$, which induces an embedding $X \to \widehat{X}$, such that $\widehat{\Gamma} := d(\Gamma)$ is standard in $\widehat{G}$. Its syndetic hull $ D\times N$ in $\widehat{G}$ does not act transitively on $\widehat{X}$.

\textbf{Some comments :}
	\begin{enumerate}
		\item The compact $\mathbb{S}^1$-factor added here does not preserve $A^+$, this is why it cannot be added to the isotropy. So  the dimension here is increased.  
		\item  In this example, $\R$ acts through an elliptic matrix, but in general there may be a hyperbolic and a unipotent part. The problem is formulated in the sequel for general homogeneous plane waves. \\
	\end{enumerate}
 
\paragraph{\textbf{Modifying the group to have standard quotients}} 
Let $G_{\rho}= (\R \times K) \ltimes_{\rho} \Heis$, $I=K \ltimes A^+$, and $X_{\rho}= G_{\rho}/ I$. Let $\Gamma=\langle \hat{\gamma} \rangle \ltimes \Gamma_0$ be a non-standard discrete subgroup acting properly and cocompactly on $X_{\rho}$.

Assume that $\hat{\gamma}$ is contained in some $1$-parameter group  $g(t)$ of $G_\rho$. We identify the image of $\rho$ with a subgroup of  ${\sf Sp}_{2n}(\R)$   and consider the Jordan decomposition  $l(t):=\rho(r(g(t)))= d(t) \circ u(t)$, where $d(t)$ is the semi-simple part and $u(t)$ the unipotent part. Write also $d(t) = e(t) \circ h(t)$, where $e(t)$ is the elliptic part (with complex eigenvalues), and $h(t)$ the hyperbolic part (with real eigenvalues).

\begin{lemma}\label{Lemma: elliptic part of l(t)}
Let  $l(t)$ be a $1$-parameter group in { ${\sf Sp}_{2n}(\R)\subset \Aut(\Heis)$}, and $ l(t) = e(t) h(t) u(t)$ be as above. 
Assume that $l(1)$ preserves a subgroup $N$ of $\Heis$. Then $h(t)$ and $u(t)$ preserve $N$ for any $t$.
\end{lemma}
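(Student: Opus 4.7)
The approach is to reformulate preservation of $N$ as preservation of its Lie algebra $\n := \Lie(N) \subset \heis$, and then to exploit the fact that the stabiliser of a linear subspace is an algebraic condition, so it is respected by every step of the Jordan decomposition. Assuming $N$ is (the identity component of) a closed connected Lie subgroup of $\Heis$, invariance of $N$ under $l(1) \in \Aut(\Heis)$ is equivalent to $l(1)(\n) \subset \n$ for the derived action on $\heis$. It therefore suffices to show that $h(t)$ and $u(t)$ preserve the subspace $\n$ for every $t \in \R$.

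The first step is to observe that $e(1)$, $h(1)$, $u(1)$ individually preserve $\n$. This reduces to the classical fact that the multiplicative Jordan factors of a real linear map, and further the elliptic and hyperbolic parts of its semisimple factor, can be written as real polynomials in the original map. Over $\C$, such polynomial expressions follow from Hermite interpolation at the distinct eigenvalues $\lambda_j$ of $l(1)$; the interpolation data $|\lambda_j|$ and $\lambda_j / |\lambda_j|$ are invariant under complex conjugation of the eigenvalues, so the interpolating polynomials can in fact be chosen with real coefficients. Since $\n$ is $l(1)$-invariant, it is preserved by every polynomial in $l(1)$, hence by each of $e(1)$, $h(1)$, $u(1)$.

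The second step promotes the invariance from $t = 1$ to arbitrary $t \in \R$ for the hyperbolic and unipotent factors by expressing the infinitesimal generators $L_h$ and $L_u$ (where $h(t) = \exp(tL_h)$ and $u(t) = \exp(tL_u)$) as polynomials in $h(1)$ and $u(1)$ respectively. For $L_u$ one uses the terminating series $L_u = \log u(1) = \sum_{k \geq 1} (-1)^{k+1} (u(1) - \mathrm{Id})^k / k$, which is finite because $u(1) - \mathrm{Id}$ is nilpotent. For $L_h$, the matrix $h(1)$ is $\R$-diagonalisable with positive eigenvalues $\mu_j$, and real Lagrange interpolation produces a polynomial $Q$ with $Q(\mu_j) = \log \mu_j$; then $Q(h(1)) = L_h$. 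In both cases $L_h$ and $L_u$ preserve $\n$, and hence so do the exponential flows $\exp(tL_h) = h(t)$ and $\exp(tL_u) = u(t)$ for every $t \in \R$.

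The only mildly delicate ingredient is the polynomial expressibility of the real elliptic and hyperbolic parts of a real semisimple map in that map itself, which hinges on the conjugation-invariance of the modulus/argument data of the eigenvalues; everything else is formal manipulation with the Jordan decomposition and finite-dimensional exponential/logarithm series.
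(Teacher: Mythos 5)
Your argument is correct and takes essentially the same route as the paper, whose proof merely asserts as ``elementary linear algebra'' that $l(1)$ preserves $N$ if and only if each refined Jordan factor $e(1)$, $h(1)$, $u(1)$ does, and that invariance under $h(1)$ (resp.\ $u(1)$) propagates to $h(t)$ (resp.\ $u(t)$) for all $t$. You simply supply the details behind those assertions, namely the real polynomial interpolation expressing the factors and the generators $\log h(1)$, $\log u(1)$ as polynomials in $l(1)$.
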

\begin{proof}
It follows from elementary linear algebra that $l(1)$ preserves $N$ if and only if $e(1)$, $h(1)$ and $u(1)$ do.
Moreover, if $h(1)$ preserves $N$, then $h(t)$ preserves $N$ for any $t$. Similarly, 
if $u(1)$ preserves $N$, then $u(t)$ preserves $N$ for any $t$.
\end{proof}

Let $N$ be the Malcev closure of $\Gamma_0$ in $\Heis$. Then $\rho(\hat{\gamma}) =l(1)$ preserves $\Gamma_0$, hence also $N$. The elliptic part $e(t)$ of  $l(t)$ is a $1$-parameter subgroup of $\Aut(\Heis)$, whose closure is a torus $\T^d$ in $\Aut(\Heis)$. It follows from Lemma \ref{Lemma: elliptic part of l(t)} above that the $1$-parameter group $D:=(t, e(t)^{-1})$ in $\R \times \T^d$ preserves $N$. 
As in the previous example, one can add the $\T^d$-factor to the group and define $\widehat{G}= (\R  \times \T^d \times K) \ltimes \Heis$. Define a new homogeneous space $\widehat{X}=\widehat{G}/\widehat{I}$,  where the isotropy is given by  $\widehat{I} = (C \times K) \ltimes A^+$, with  $C$ any compact subgroup of $\T^d$ preserving $A^+$. 
We have a natural injective morphism $d: G_{\rho} \to \widehat{G}$, $d(t,k,h)=(t, 0, k,h)$ inducing an embedding $\bar{d}: X_{\rho} \to \widehat{X}$.
Then $\widehat{\Gamma} := d(\Gamma)$ is standard in $\widehat{G}$, and  $D \ltimes N$ (note that the $D$-action on $N$ here is generically non-trivial) is a syndetic hull of $\hat \Gamma$ in $\widehat{G}$.
\bigskip

In general, one cannot expect that the extension $\widehat{X}$ admits a $\widehat{G}$-invariant pseudo-Riemannian metric, even in the case $\widehat{I}=I=K \ltimes A^+$. Let us take a closer look at the latter situation. The following proposition characterizes the spaces $\widehat{X}$
that are indeed extensions in a geometric sense.

\begin{proposition}\label{Prop6.5}
The extension $\widehat{X}$ admits a left $\widehat G$-invariant pseudo-Riemannian metric if and only if the normalizer of  $\a^+ \oplus \z$ has codimension one in $\hat \g$. 
In the latter case, $\widehat{X}$ admits a Lorentzian metric such that $\bar d: X_\rho \hookrightarrow \widehat{X}$ is an isometric embedding.
\end{proposition}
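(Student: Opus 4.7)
The plan is to translate existence of a left $\hat G$-invariant pseudo-Riemannian metric on $\hat X = \hat G/\hat I$ into existence of an $\Ad(\hat I)$-invariant non-degenerate symmetric bilinear form $g$ on $\hat\g/\hat\i \cong \R\partial_t \oplus \t \oplus \a^- \oplus \z$, where $\t = \Lie(\T^d)$. Since $K$ acts trivially on $\R, \t, \z$ and standardly on $\a^-$, the $\Ad(K)$-invariance of $g$ forces $\a^-$ to be orthogonal to $\R \oplus \t \oplus \z$ and $g|_{\a^-}$ to be $K$-invariant; consequently non-degeneracy of $g$ requires non-degeneracy of $g|_{\a^-}$.

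Next I would expand $\Ad(\exp X) = \exp(\ad X) = 1 + \ad X + \tfrac12(\ad X)^2$ for $X \in \a^+$. Modulo $\hat\i$, the only nonzero components of $\ad X$ are $\partial_t \mapsto -{\bf L}_-(X)$, $e \mapsto -\tilde e_-(X)$ for $e \in \t$ (both landing in $\a^-$), and $Y \mapsto \omega(X,Y)z_0 \in \z$ for $Y \in \a^-$, where ${\bf L}_-, \tilde e_-$ denote the $\a^-$-components of the derivation action on $\a^+$ and $\omega(X,Y) = [X,Y]$ is the symplectic form. Writing out $\Ad(A^+)$-invariance at first order yields $g(\z, \z) = 0$, $g(\a^-, \z) = 0$, and the key relation
\[
g|_{\a^-}(\tilde e_-(X), Y) = g(e, z_0)\,\omega(X, Y), \qquad e \in \R \oplus \t,\ X \in \a^+,\ Y \in \a^-.
\]
Since $g|_{\a^-}$ is non-degenerate, this is equivalent to $\tilde e_- = g(e, z_0)\,\sigma$ as linear maps $\a^+ \to \a^-$, where $\sigma$ is determined by $g|_{\a^-}(\sigma(X), Y) = \omega(X, Y)$. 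The second-order invariance collapses, after substitution, to $g|_{\a^-}(\sigma(X), \sigma(X)) = \omega(X, \sigma(X))$, which is exactly the defining relation for $\sigma$; no new constraints arise (and $(\ad X)^3 = 0$ truncates the expansion).

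The kernel of $e \mapsto \tilde e_-$ is precisely $H := \mathfrak n_{\hat\g}(\a^+ \oplus \z) \cap (\R \oplus \t)$, and a direct check gives $\mathfrak n_{\hat\g}(\a^+ \oplus \z) = (\k \oplus \heis) \oplus H$. The image of $e \mapsto \tilde e_-$ lies in the line $\R \sigma$, so has dimension at most $1$; non-degeneracy of $g$ forces $g(\cdot, z_0) \not\equiv 0$ on $\R \oplus \t$ (since $z_0$ is $g$-orthogonal to $\a^- \oplus \z$), which makes the image exactly one-dimensional, i.e.\ $\dim H = d$, the codimension-one condition on the normalizer. Conversely, given codim-one normalizer, I would build a Lorentzian metric explicitly: take $g|_{\a^-} = g_{\mathrm{orig}}|_{\a^-}$ so that $\sigma = \sigma_{\mathrm{orig}}$ generates the image of $e \mapsto \tilde e_-$; define $g(\cdot, z_0)$ on $\R \oplus \t$ as the unique functional vanishing on $H$ such that $\tilde e_- = g(e, z_0)\sigma$ (which automatically matches $g_{\mathrm{orig}}(\partial_t, z_0)$ on $\R\partial_t$ via the original plane-wave identity ${\bf L}_- = g_{\mathrm{orig}}(\partial_t, z_0)\sigma_{\mathrm{orig}}$); put an arbitrary positive-definite form on $\t$; and match $g(\partial_t, \partial_t)$ with the original value. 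The resulting form is $\Ad(\hat I)$-invariant by construction, of Lorentzian signature $(1, n+d+1)$ because $\a^- \oplus \t$ is positive-definite and $\R\partial_t \oplus \z$ is a Lorentzian plane, and restricts to $g_{\mathrm{orig}}$ on $\g_\rho/\i \hookrightarrow \hat\g/\hat\i$, making $\bar d: X_\rho \hookrightarrow \hat X$ an isometric embedding.

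The main obstacle I expect is the second-order $\Ad(A^+)$-invariance computation: one must verify that after substituting $\tilde e_- = g(e, z_0)\sigma$, every would-be quadratic constraint reduces to the single algebraic identity $g|_{\a^-}(\sigma(X), \sigma(X)) = \omega(X, \sigma(X))$, which holds by definition of $\sigma$. A secondary subtlety is in the $K$-invariance step: if $K$ fails to act without fixed vectors on $\a^-$, the orthogonality of $\a^-$ to $\R \oplus \t$ is not automatically forced and must be imposed as part of the construction in the ``if'' direction, but this remains compatible with $\Ad(\hat I)$-invariance.
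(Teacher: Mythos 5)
Your overall strategy is the same as the paper's: reduce to an $\Ad(\widehat I)$-invariant scalar product on $\q=\hat\g/\hat\i$, use skew-symmetry of $\ad_X$ for $X\in\a^+$ to show that $\z$ is isotropic and orthogonal to $\a^-$ and to derive the identity $g(\tilde e_-(X),Y)=g(e,z_0)\,\omega(X,Y)$, and then translate the condition $g(e,z_0)=0$ into ``$e$ normalizes $\a^+\oplus\z$'', so that non-degeneracy of $g$ at $z_0$ becomes exactly the codimension-one condition on the normalizer. Your converse construction is also the paper's in spirit (the paper only sketches that direction).

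There is, however, one genuine gap, and it sits in the ``only if'' direction. You obtain the non-degeneracy of $g|_{\a^-}$ --- which you need both to define $\sigma$ and to upgrade the displayed identity to $\tilde e_-=g(e,z_0)\sigma$ --- from $\Ad(K)$-invariance, arguing that $K$-invariance forces $\a^-\perp(\R\oplus\t\oplus\z)$. This only works when $\a^-$ has no nonzero $K$-fixed vector; since $K$ is merely a closed subgroup of $\SO(n)$ (possibly trivial), it fails in general. You do flag this, but you locate the problem in the ``if'' direction only, whereas it is precisely the forward implication that uses non-degeneracy of $g|_{\a^-}$. The paper derives this non-degeneracy from the $\a^+$-invariance itself: one first shows $g(\partial_t,z_0)\neq 0$ (otherwise, using that ${\bf L}_-:\a^+\to\a^-$ is onto --- a fact coming from the original plane-wave metric --- one gets $p(\a^-)$ orthogonal to all of $\q$, contradicting non-degeneracy of $g$), and then $g({\bf L}_-(X),Y)=g(\partial_t,z_0)\,\omega(X,Y)$ with $\omega$ non-degenerate forces $g|_{\a^-}$ to be non-degenerate. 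The same repair is available inside your own argument: your key relation, applied with any $e$ satisfying $g(e,z_0)\neq 0$ (such an $e$ exists by your own observation that $z_0$ is orthogonal to $\a^-\oplus\z$), already shows that no nonzero $Y_0\in\a^-$ can be $g$-orthogonal to all of $\a^-$, with no appeal to $K$ at all. So the gap is local and fixable, but as written the step is wrong. A minor further remark: your ``second-order'' verification is vacuous, since invariance under the connected group $A^+=\exp(\a^+)$ is equivalent to the first-order, infinitesimal condition, so nothing needs to be checked beyond skew-symmetry of $\ad_X$.
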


\begin{proof}
Denote by $p$ the projection $\hat\g \to  \q:=\hat\g/\i$. 
For $x\in\hat\g$ we abbreviate $p(x)$ as $\bar x$. The adjoint representation of $\i$ induces a representation $\varphi$ of $\i$ on $\q$. Recall that $\widehat{G}$ preserves a pseudo-Riemannian metric on $\widehat{G}/I$ if and only if $\q$ admits a $\varphi$-invariant scalar product $h$. The Lorentzian metric on $X_\rho$ induces a $\varphi$-invariant scalar product $\ip$ on $p(\g_\rho)$. 
Fix some $0 \neq z \in \z$. Then $\langle \bar z, \bar z \rangle=\langle \varphi_a(\overline{a'}),\bar z \rangle=-\langle \overline{a'},\varphi_a(\bar z)\rangle=0$, for suitable $a\in\a^+,a'\in\a^-$. Next, let $a' \in \a^-$ and choose $a \in \a^+$ such that $[a,a'] =z$.  Then $\langle \overline{a'},\bar{ z} \rangle=\langle \overline{a'}, \varphi_a(\overline{a'}) \rangle=0$. Thus, $\langle p(\a^-\oplus\z),\bar z \rangle=0$.
Now, take $0\not=a\in\a^+$, and choose $a'\in\a^-$ such that $[a,a']=z$ for some fixed $0\not=z\in\z$. Then $\langle \varphi_a (\bar{\bf L}),\overline{a'}\rangle=-\langle \bar{\bf L},\varphi_a(\overline{a'})\rangle =\langle \bar{\bf L},\bar z\rangle\not=0$. Thus $a \mapsto \varphi_a(\bar{\bf L})$ induces a bijective map from $\a^+$ to $p(\a^- \oplus \z)/p(\z)$, since $\langle p(\a^-),\bar z\rangle=0$. In particular, 
\begin{equation}\label{y}
   \varphi_{\a^+}(\overline{{\bf L}})  \equiv p(\a^-) \mod p(\z).
\end{equation} 
Suppose $h$ exists. The same reasoning above gives $h(p(\a^-\oplus\z),\bar z)=0$. 
Assume that $h(\bar{\bf L},\bar z)=0$. Using  (\ref{y}) and taking into account the $\varphi$-invariance of $h$, we obtain that $p(\a^-)\perp \q$, which is a contradiction. Hence $h(\bar{\bf L},\bar z)\not=0$. 
Now, consider $u \in \hat{\mathfrak{g}}$, $0 \neq a \in \a^+$ and $0 \neq a' \in \a^-$. We write $[a,a']=\omega(a,a')z$. Then $h(\varphi_a(\overline{u}), a') =-h(\overline{u}, \varphi_a(\overline{a'}))=-\omega(a,a')h(\overline{u},\overline{z})$. Hence
\begin{align}\label{Eq-h}
%	h(\overline{[u,a]}, \overline{a'}) 
 h(\varphi_a(\overline{u}), \overline{a'})=-\omega(a,a')h(\overline{u},\overline{z}).
\end{align}
Set $u={\bf L}$ in (\ref{Eq-h}). Since  (\ref{y}) holds,  $\overline{a'}^\perp \cap\, p(\a^-)$  has codimension one in $p(\a^-)$. Thus,  
$h$ is non-degenerate on $p(\a^-)$. 
Then (\ref{Eq-h}) implies that $ \bar{u} \in \bar{z}^\perp$ if and only if $u$ belongs to the normalizer of $\a^+ \oplus \z$. Indeed, if $h(\overline{u},\overline{z})=0$, then  $\varphi_a(\overline{u}) \in p(\heis) \cap   p(\a^-)^\perp = p(\z)$. Since $[u,a] \in \heis$, this yields $[u,a] \in \a^+ \oplus \z$. Conversely, if we take $a'$ such that $[a,a'] \neq 0$, then  (\ref{Eq-h}) implies $h(\overline{u},\overline{z})=0$. Since $\bar z^\perp$ has codimension one in $\q$, the normalizer of $\a^+\oplus\z$ must have codimension one in $\hat\g$. 
For the converse, take the restriction of $h$ to $p(\mathfrak{g}_\rho)$ equal to $\ip$. One easily checks that this can be extended to a $\varphi$-invariant Lorentzian scalar product on $\q$.
\end{proof}

\section{Equicontinuity of the parallel flow}\label{Section: Equicontinuity}

Let $(M,V)$ be a compact locally homogeneous plane wave. Since $M$ is complete, it is a quotient $\Gamma \backslash G_{\rho}/I$ by some subgroup $\Gamma$ of $G_{\rho}$ acting properly discontinuously on $G_{\rho} /I$. In this case, the action of the flow of $V$ is given by the $Z$-action, where $Z$ is the center of Heisenberg (Fact \ref{Fact: parallel flow is Z}).  Recall that the flow of  $V$ is said to be equicontinuous if it is relatively compact, considered as a one-parameter subgroup of the isometry group endowed with its Lie group topology.  Equivalently, the flow is equicontinuous if it preserves a Riemannian metric. The key point for this equivalence is the fact that the isometry group of a Lorentzian metric $g$ on $M$ is closed in $\mathsf{Homeo}(M)$  with respect to the compact-open topology (which in fact coincides on $\Isom(M,g)$ with the Lie group topology), see  \cite{nomizu}.
\subsection{Equicontinuity} 
\begin{theorem}\label{Theorem: equicontinuity}
Let $(M,V)$ be a compact locally homogeneous plane wave. The action of the parallel $V$-flow is equicontinuous.
\end{theorem}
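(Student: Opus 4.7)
\emph{Proof proposal.} My plan is to produce a Riemannian metric on $M$ preserved by the $V$-flow; by the discussion at the start of this section, this is equivalent to equicontinuity. By Fact~\ref{Fact: parallel flow is Z} the $V$-flow is the action of $Z$, the center of $\Heis$. Because $(M,V)$ is a plane wave (not merely a weakly plane wave, see Remark~\ref{remark: Kundt space- weakly plane wave}), the field $V$ descends to $M$; in a compact cocompact quotient this forces us into the first family of \cite{Blau}, for which $Z$ is central in $G_\rho$. The centrality of $Z$ will be the pivotal fact.

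The first step is to reduce, via Theorem~\ref{Introduction-Theorem: standarness and semi-standardness}, to a simply transitive syndetic hull containing $Z$. In the non-straight case Theorem~\ref{Theorem: standarness in non-straight case} immediately provides a nilpotent syndetic hull $N=\R\ltimes N_0\subset G_\rho$ acting simply transitively with $Z\subset N_0\subset N$. In the straight standard case, the corollary after Proposition~\ref{Proposition : semi/standard} yields a syndetic hull of the form $\R\ltimes N_0$, and the dimension count $1+(n+1)=n+2=\dim X_\rho$ forces simple transitivity, while Proposition~\ref{Proposition: transitive action without lattice} gives $Z\subset N_0$. In the straight non-standard case I invoke Theorem~\ref{Theorem: non-standard phenomena} to embed $(G_\rho,X_\rho)\hookrightarrow(\widehat G,\widehat X)$ so that the lift of $\Gamma$ becomes standard in $\widehat G$, reducing to an analogous situation on the enlarged space.

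In the second step, I identify $X_\rho\cong N$ by the orbit map, so $M\cong\Gamma\backslash N$. The $G_\rho$-invariant vector field $V$ pulls back to the left-invariant vector field on $N$ determined by $z\in\mathfrak n$; its flow is therefore right translation by $\exp(tz)$. Centrality of $z$ in $\mathfrak g_\rho$ gives $\Ad_{\exp(tz)}|_{\mathfrak n}=\mathrm{id}$, so this right translation is an isometry of every left-invariant Riemannian metric on $N$. Choosing one produces a $Z$-invariant Riemannian metric on $M$. In the lifted case one first constructs a $Z$-invariant Riemannian metric on $\widehat M=\Gamma\backslash\widehat X$ by an analogous argument adapted to the possibly non-simply-transitive action, and then restricts to the $G_\rho$-equivariantly embedded submanifold $M\hookrightarrow\widehat M$, which is $Z$-invariant.

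The main obstacle I foresee is the hypothesis on $\hat\gamma$ in Theorem~\ref{Theorem: non-standard phenomena}: it does not cover ``Case~1'' of Section~\ref{Section: Non Standard} where $\hat\gamma\notin\exp(\mathfrak g_\rho)$. Happily, in the $\widetilde{\Euc}_2\times Z$ example of that section $\Gamma$ is already standard in $G_\rho$ with a (non-nilpotent) syndetic hull containing $Z$, and my Step~2 never used nilpotency of $N$. The subtle technical point is to verify that this is always the case: whenever $\hat\gamma\notin\exp(\mathfrak g_\rho)$ and $\Gamma$ is non-standard, one should still be able to enlarge $G_\rho$ compatibly with the centrality of $Z$ so as to make $\Gamma$ standard in a bigger group.
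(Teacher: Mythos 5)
There is a genuine gap, and it sits exactly where you flagged it: the straight non-standard case. Your strategy needs a \emph{connected} syndetic hull $N$ acting (simply) transitively, so that $M\cong\Gamma\backslash N$ and the $V$-flow becomes right translation by a central one-parameter group, isometric for any left-invariant metric. But Theorem~\ref{Theorem: standarness in straight case} only guarantees semi-standardness in the straight case, and non-standard straight quotients do occur (Example~\ref{Example: non standard C-W}). Your proposed repair via Theorem~\ref{Theorem: non-standard phenomena} fails on two counts. First, that theorem assumes $\hat\gamma$ lies in a one-parameter group, and you explicitly leave the complementary case (``Case 1'' of Section~\ref{Section: Non Standard}) as an unverified hope; observing that one particular example happens to be standard in $G_\rho$ does not close it. Second, even when the embedding $X_\rho\hookrightarrow\widehat X$ exists, the syndetic hull $D\ltimes N$ in $\widehat G$ does \emph{not} act transitively on $\widehat X$ (the paper points this out twice), so the identification of $\widehat X$ with a group that underlies your Step~2 is unavailable there, and ``an analogous argument adapted to the possibly non-simply-transitive action'' is precisely the missing content. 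A smaller omission: the theorem also covers flat compact plane waves, where one must first reduce to $\mathsf{SPol}\cong G_\rho$; your argument starts from the two families of non-flat homogeneous plane waves.

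The paper's proof shows the detour through standardness is unnecessary. Equicontinuity is relative compactness of the image of $Z$ in $\Isom(M)=\Gamma\backslash N_{G_\rho}(\Gamma)$, so it suffices to show that $\Gamma'\backslash\overline{\Gamma' Z}$ is compact for a finite-index $\Gamma'$ (followed by an averaging step to pass back to $\Gamma$). In the straight case Theorem~\ref{Theorem: standarness in straight case} already gives a \emph{disconnected} $\Z$-syndetic hull $\Z\ltimes N_0$ (Definition~\ref{Definition: Z-syndetic hull}) with $Z\subset N_0$ and $\Gamma'$ cocompact in it; since this subgroup is closed and contains $Z\Gamma'$, the quotient $\Gamma'\backslash\overline{\Gamma' Z}$ is a closed subset of the compact set $\Gamma'\backslash(\Z\ltimes N_0)$, and you are done --- no standardness, no enlargement of the group, no transitivity of the hull needed. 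If you want to salvage your construction, replace ``left-invariant metric on a simply transitive $N$'' by this relative-compactness criterion; your Step~2, while correct in the standard cases it covers, cannot be completed as stated in the non-standard straight case.
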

\begin{proof}
Let $X$ be the universal cover of $M$, we have $M=\Gamma\backslash X$, where $\Gamma\subset \isom(X)$ is a subgroup acting freely, properly discontinuously and cocompactly on $X$. In the non-flat case, $\isom^{\circ}(X)$  is isomorphic to $G_{\rho}$ for some $\rho$. In the flat case, since $\Gamma$ preserves the lift of $V$ to $X$, we can reduce to the subgroup of the Poincar\'e group preserving a lightlike vector, namely, $\mathsf{SPol}$ which is also isomorphic to $G_{\rho}$ for some $\rho$ (see Definition \ref{Section: Flat case}).  And we have $\mathsf{Isom}{(M)}= \Gamma\backslash N_{G_{\rho}}(\Gamma)$. Moreover, by Fact \ref{Fact: parallel flow is Z}, the flow of $V$ on $M$ is given by the left $Z$-action on the double quotient $\Gamma\backslash G_{\rho}/I$ (the action is well defined, since $\Gamma$ centralizes $Z$). The equicontinuity of the flow of $V$ is equivalent to $\pi(Z)$ being relatively compact in $\Gamma\backslash N_{G_{\rho}}(\Gamma)$, where $\pi: N_{G_{\rho}}(\Gamma) \to \Gamma \backslash N_{G_{\rho}}(\Gamma)$ is the natural projection. So to show that the action is equicontinuous, it is enough to show that $\Gamma\backslash\overline{\Gamma Z}$ (here the closure is in $N_{G_{\rho}}(\Gamma)$) is compact. 
There are two cases:\\
\textbf{1)} Straight case: We know from Theorem \ref{Theorem: standarness in straight case} that $\Gamma$ contains a finite index subgroup $\Gamma'$, which is a cocompact lattice of a closed subgroup $\Z\ltimes N_0$ of $G_{\rho}$, with $N_0$ a closed subgroup containing the center $Z$. On the one hand,  $\overline{Z\Gamma'}\subset \Z\ltimes N_0$. However, the quotient $\Gamma'\backslash(\Z\ltimes N_0)$ is compact. This implies that the $V$-flow is equicontinuous on $\Gamma' \backslash X$, i.e. it preserves a Riemannian metric $h$ on $X$ which is $\Gamma'$-invariant. Now, averaging $h$ over the representatives of the (finite) quotient space $\Gamma'\backslash\Gamma$ defines a new Riemannian metric $h^* :=\sum_{i=1}^{r} \gamma_i^*(h)$, where $\{\gamma_1,\dots,\gamma_r\}$ is a set of such representatives, which is $\Gamma$-invariant and preserved by the flow of~$V$. \\
\textbf{2)} Non-straight case: Similarly, $\Gamma$ contains a finite index subgroup $\Gamma'$ which is a cocompact lattice in some closed Lie subgroup $N$ that contains the center $Z$ (Theorem \ref{Theorem: standarness in non-straight case}). This implies in the same way as in the straight case that the $V$-flow preserves a Riemannian metric on $M$.
\end{proof}

Now we can prove all statements in Theorem \ref{Introduction-Theorem: parallel fields}.
\begin{proof}[Proof of Theorem \ref{Introduction-Theorem: parallel fields}]
The existence of a parallel vector field $V$ and the last statement (existence of a parallel line field) are proved in Theorem \ref{Theorem section flat: parallel fields}. When $V$ is timelike, the isometry group preserving $V$ is $\SO(n) \ltimes \R^{1+n}$, so the claim follows from Bieberbach Theorem.  The case where $V$ is lightlike follows from Theorem \ref{Theorem: equicontinuity}. For $V$ spacelike, examples of non-equicontinuous Anosov as well as partially hyperbolic flows are given in Example \ref{Example: SOL-Anosov}.
\end{proof}

\subsection{Equicontinuity from standardness}
In the standard case, we can give a more general proof of equicontinuity of the lightlike parallel flow.
In this case, $X_{\rho}$ identifies with a Lie group $S$ endowed with a left-invariant Lorentzian metric, and $M$ is finitely covered by $M':=\Gamma \backslash S$, where $\Gamma$ is a uniform torsion free discrete subgroup of $S$. The action of the $V$-flow on $X_{\rho}$ induces an action on $S$ which commutes with all left translations. This defines a left invariant parallel lightlike vector field on $S$. So equicontinuity of the $V$-flow on $M$ is equivalent to the equicontinuity of the induced vector field on $\Gamma \backslash S$. The latter is a consequence of Theorem \ref{Theorem: equicontinuity on Lie group} below.

\begin{theorem}\label{Theorem: equicontinuity on Lie group}
Let $G$ be a Lie group with a left invariant Lorentzian metric. Let $V$ be a left-invariant parallel lightlike vector field on $G$. Then $\ad_V$ is skew-symmetric for a Riemannian scalar product on $\g$.

\end{theorem}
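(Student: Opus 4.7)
The plan is to show that $\ad_V$ is semisimple with purely imaginary spectrum, which is equivalent to the existence of a Euclidean inner product on $\g$ making $\ad_V$ skew. First, I would apply the Koszul formula for the Levi-Civita connection of a left-invariant metric to the parallel equation $\nabla_X V = 0$: since all derivative terms vanish on left-invariant fields, this yields
\[
\langle \ad_V X, Z\rangle + \langle X, \ad_V Z\rangle \;=\; -\langle V,[X,Z]\rangle \qquad\text{for all } X,Z\in\g.
\]
The left-hand side is symmetric in $(X,Z)$ whereas the right-hand side is antisymmetric, so both sides vanish identically. This yields two key facts: (a) $\ad_V$ is skew with respect to the Lorentzian form $\langle,\rangle$, and (b) $V^\perp$ is a codimension-one ideal of $\g$ (it contains $[\g,\g]$).

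From (a) and $\ad_V V = 0$ one has $\ad_V(\g) \subseteq V^\perp$, so $\ad_V$ preserves the filtration $0 \subset \R V \subset V^\perp \subset \g$ and acts as zero on the outer quotients. Because $V$ is lightlike, the radical of $\langle,\rangle|_{V^\perp}$ equals $\R V$, hence $V^\perp/\R V$ carries a positive-definite inner product on which the induced operator $\overline{\ad_V}$ is skew-adjoint, and is therefore semisimple with purely imaginary spectrum. Consequently every eigenvalue of $\ad_V$ on $\g$ lies in $i\R\cup\{0\}$.

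It remains to rule out a non-trivial Jordan block at $0$. Skew-adjointness shows that any $X$ with $\ad_V^2 X=0$ has $\ad_V X \in \R V$ (its image is lightlike and orthogonal to $V$, hence a multiple of $V$); so a non-trivial block would give some $X \in V^\perp$ with $\ad_V X = V$. Pick $U \in \g\setminus V^\perp$ with $\langle U,V\rangle = 1$ and an orthonormal basis $e_1,\ldots,e_n$ of $E := (\R U \oplus \R V)^\perp$, set $w := \ad_V U \in E$, and let $B$ be the skew operator induced on $E$ by $\ad_V|_E$. The existence of such an $X$ translates to $w$ having a non-zero component in $\ker B$. I would contradict this using Jacobi: the ideal condition (b) forces the brackets $[U,e_i]$ and $[e_i,e_j]$ to lie in $V^\perp$, and expanding the Jacobi identity for suitable triples $(V,e_i,e_j)$ and comparing the $e_i$-components produces the relation $w \perp \ker B$. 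Once this is excluded, $\ad_V$ is semisimple with spectrum in $i\R$; its real Jordan form is block-diagonal with $2\times 2$ skew blocks and zero blocks, and the obvious Euclidean inner product on $\g$ makes $\ad_V$ skew-symmetric. The principal obstacle is this last Jacobi computation: in dimensions $3$ and $5$ a single Jacobi identity kills $w|_{\ker B}$, but in general one has to track the $V$- and $E$-components of several Jacobi identities simultaneously and exploit antisymmetry of the structure constants with some care.
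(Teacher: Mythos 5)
Your architecture is sound, and your opening move is a genuine streamlining of the paper's: one application of the Koszul formula to $\nabla V=0$ gives simultaneously that $\ad_V$ is skew for the Lorentzian form and that $V\perp[\g,\g]$, whereas the paper obtains the first fact from left and right multiplication by the flow of $V$ both being isometric, and the second from the Koszul characterization of $[\g,\g]^\perp$. Your reduction to excluding an $X$ with $\ad_V X=V$ is also correct and is exactly equivalent to the paper's formulation (the paper phrases it as non-degeneracy of $\q=\ker\ad_V$: the radical of $\q$ can only be $\R V$, so degeneracy of $\q$ means $V\in\q^\perp=\mathrm{im}(\ad_V)$). Two points you should still make explicit: semisimplicity at the \emph{nonzero} eigenvalues needs a sentence (for $\mu\neq 0$ the generalized eigenspace injects $\ad_V$-equivariantly into $V^\perp_{\C}/\C V$, where the operator is semisimple); and once $\ker\ad_V$ is known to be non-degenerate you can dispense with the Jordan form entirely, since $\g=\q\oplus\q^\perp$ with $\q^\perp$ positive definite and $\ad_V$-invariant, and $\ad_V$ vanishes on $\q$.

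The genuine gap is precisely the step you flag as the principal obstacle: ``expanding Jacobi identities for suitable triples and tracking components'' is a plan, not a proof, and as described it is unlikely to close, because the identities $[V,[e_i,e_j]]=[[V,e_i],e_j]+[e_i,[V,e_j]]$ with $e_i,e_j$ taken in $E$ never see the brackets with $U$, while any $\ad_V$-preimage of the offending vector necessarily has a nonzero $U$-component. The paper closes this with one further idea and a two-line computation. Normalize $e_1\in\ker B$ with $\ad_V e_1=V$, chosen proportional to the $\ker B$-component of $w$ so that $e_1\perp\ker\ad_V$; skewness then gives $e_1\in(\ker\ad_V)^\perp=\mathrm{im}(\ad_V)$, say $e_1=\ad_V e_2$. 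Pairing the derivation identity against $e_1$,
\begin{align*}
\langle V,[e_1,e_2]\rangle&=\langle \ad_V e_1,[e_1,e_2]\rangle=-\langle e_1,\ad_V[e_1,e_2]\rangle\\
&=-\langle e_1,[V,e_2]\rangle-\langle e_1,[e_1,\ad_V e_2]\rangle=-\langle e_1,e_1\rangle\neq 0,
\end{align*}
since $[e_1,\ad_V e_2]=[e_1,e_1]=0$ and $e_1$ is spacelike; this contradicts $V\perp[\g,\g]$. So your strategy does succeed, but this single computation \emph{is} the content of the step, and your sketch does not contain it.
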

\begin{proof}
Let $\langle \cdot\,,\cdot\rangle$ denote the metric on $G$ and also the induced scalar product on~$\g$.
The vector field $V$ is left invariant, so its flow corresponds to the right action of a one parameter group, say $v^t$ of $G$. 
Since the Lorentzian metric on $G$ is left invariant, left multiplication by $v^t$ is isometric. 
On the other hand, $V$ is parallel, hence Killing. Thus, right multiplication by $v^t$ is also isometric. In particular, conjugacy by $v^t$ is isometric, hence $\ad_V: \mathfrak{g}  \to \mathfrak{g}$  is skew symmetric  with respect to $\langle \cdot\,,\cdot\rangle$.

Assume that $\langle \cdot\,,\cdot\rangle$ restricted to $\q:=\ker \ad_V$ is degenerate. Since $V\in\q$ is lightlike, we have $\q=\R\cdot V \oplus \q_1$, where $\q_1$ is non-degenerate. Then $\q_0:=\q_1^\perp\subset\g$ is invariant under $\ad_V$. So we obtain a skew-symmetric linear map $A:=\ad_V|_{\q_0}$ on the Lorentzian space $\q_0$. By construction, $\ker A=\R\cdot V$. Since $V$ is lightlike, also $V\in(\ker A)^\perp=\mathsf{im}(A)$ holds. So we can choose $e_1\in\q_0$ such that $A e_1=V$. Then $\langle e_1, V\rangle=0$, which implies that $e_1$ is spacelike. Moreover, $\langle e_1, V\rangle=0$ implies $e_1\in (\ker A)^\perp=\mathsf{im}(A)$, thus $e_1=A(e_2)$. Now we compute $\langle V,[e_1,e_2]\rangle=\langle Ae_1,[e_1,e_2]\rangle=-\langle e_1,A[e_1,e_2]\rangle=-\langle e_1,[Ae_1,e_2]\rangle-\langle e_1,[e_1,Ae_2]\rangle =-\langle e_1, Ae_2\rangle=-\langle e_1,e_1\rangle >0$. On the other hand, by the Koszul formula, $[\g,\g]^\perp$ consists of all elements $Y\in\g$ for which $\nabla Y:\g\to\g$ is symmetric. Hence $V$ belongs to $[\g,\g]^\perp$, which gives a contradiction.  Consequently, $\q$ is non-degenerate, thus Lorentzian. This implies $\g=\q\oplus \q^\perp$. We define a Riemannian product $(\cdot\,,\cdot)$ on $\g$ by $(\cdot\,,\cdot)=\langle\cdot\,,\cdot\rangle|_{\q^\perp}\oplus (\cdot\,,\cdot)'$, 
where $(\cdot\,,\cdot)'$ is any Riemannian scalar product on $\q$. Then $\ad_V$ is skew-symmetric with respect to $(\cdot\,,\cdot)$.
\end{proof}

\section{On standardness of more general locally homogeneous structures}\label{Section: More homogeneous structures}
In the study of standardness of compact quotients $\Gamma \backslash X_{\rho}$, we did not use the Lorentzian nature of the homogeneous spaces $X_\rho=G_{\rho}/I$ provided by the restrictions on the $\rho$-action. In this section, we give an analogous statement to that in Sections \ref{Section: Fundamental group} and \ref{Section: syndetic hull, standard semi-Standrad}. We consider $G_{\rho} := (\R \times K) \ltimes_{\rho} \Heis$, as defined in \ref{Eq: G_rho} where here $\rho$ is a general action. Moreover, we drop the conditions on $K$ except that it is compact and connected, and we consider also a potentially smaller isotropy group $I=C\ltimes A^+$, where $C$ is a closed subgroup of $K$ that preserves $A^+$. We consider the homogeneous space $X_{\rho}:=G_{\rho}/I$.

 The proof of Theorem \ref{Theorem Gamma} is based on the existence of automorphisms $\Psi_a$ of $G_\rho$ which are the identity on $\R\times K$ and restricted to $\Heis$ of the form 
\begin{equation}\label{dilation}h_a:\ \Heis \longrightarrow \Heis, \ (z, \xi)\longmapsto(a^2 z, a \xi) 
\end{equation} 
for $a \in \R\setminus\{0\}.$  The next lemma immediately implies that such automorphisms also exist for the more general groups $G_\rho$ that are considered in this section (up to replacing $G_\rho$ by an isomorphic group $G_{\rho'}$). Automorphisms of $\Heis$ of the form \eqref{dilation} are called homotheties (or dilations). 
\begin{lemma}
  Let $G_\rho$ be as defined above. Then there exists a homomorphism $\rho':\R\times K\to\Aut(\Heis)$ such that $G_\rho$ and $G_{\rho'}$ are isomorphic and $\rho'(\cdot)h_a=h_a\rho'(\cdot)$ for any homothety $h_a$ of $\Heis$.   
\end{lemma}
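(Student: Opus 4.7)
The plan is to decompose $\Aut(\Heis)$ into the centralizer of the homothety one-parameter group $\{h_a\}_{a\in\R^*}$ and a normal complement, and then to modify $\rho$ in two stages---first on the compact factor $K$, then on $\R$---so that the resulting homomorphism takes values in this centralizer while still producing an isomorphic Lie group.

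Writing $\Heis=\R\times\R^{2n}$ with symplectic form $\omega$, every $\phi\in\Aut(\Heis)$ has the form $(z,\xi)\mapsto(\lambda z+\beta(\xi),A\xi)$ with $\lambda\in\R^*$, $A^*\omega=\lambda\omega$ and $\beta\in(\R^{2n})^*$; a direct check shows $\phi$ commutes with every $h_a$ if and only if $\beta=0$, and I denote this centralizer by $H_0$. The inner automorphisms are exactly those with $(\lambda,A)=(1,I)$ and arbitrary $\beta$, so $\mathrm{Inn}(\Heis)\cong\R^{2n}$ is normal in $\Aut(\Heis)$ and complementary to $H_0$, yielding $\Aut(\Heis)=\mathrm{Inn}(\Heis)\rtimes H_0$. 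Now $\mathrm{Inn}(\Heis)$ contains no non-trivial compact subgroup, so the projection $\rho(K)\to H_0$ is injective and $\rho(K)$ is the graph of a smooth $1$-cocycle $c\colon K\to\mathrm{Inn}(\Heis)$; averaging against Haar measure on $K$ realises $c$ as a coboundary, so there exists $h_0\in\Heis$ with $i_{h_0}\rho(K)i_{h_0}^{-1}\subset H_0$. Since $(r,h)\mapsto(r,i_{h_0}(h))$ is a Lie group isomorphism from $G_\rho$ onto $G_{i_{h_0}\rho\,i_{h_0}^{-1}}$, I may replace $\rho$ by this conjugate and assume from the outset that $\rho(K)\subset H_0$, i.e.\ $d\rho(\mathfrak{k})\subset\mathfrak{h}_0:=\Lie(H_0)$.

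Only the $\R$-factor may now fall outside $H_0$. Let $L:=d\rho(\partial_t)\in\Der(\heis)$ and split it as $L=L_0+\ad_\xi$ according to the decomposition $\Der(\heis)=\mathrm{inn}(\heis)\oplus\mathfrak{h}_0$, with $L_0\in\mathfrak{h}_0$ and $\xi\in\mathfrak{a}$. The splitting is $K$-equivariant because $\rho(K)\subset H_0$ preserves each summand, so the $K$-invariance of $L$ (coming from $[\R,\mathfrak{k}]=0$) descends to $L_0$ and $\ad_\xi$; as $\mathfrak{a}\cap\mathfrak{z}=0$, this forces $d\rho(k)\,\xi=0$ for every $k\in\mathfrak{k}$. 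Setting $\rho'(t,k):=\exp(tL_0)\cdot\rho(k)$, the identity $[\ad_\xi,d\rho(k)]=-\ad_{d\rho(k)\xi}=0$ together with $[L,d\rho(k)]=0$ yields $[L_0,d\rho(k)]=0$, so $\rho'$ is a well-defined homomorphism whose image lies in $H_0$ by construction and therefore commutes with every $h_a$.

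It remains to check that $G_\rho\cong G_{\rho'}$, which I would do through the Lie algebra map $\phi\colon\g_\rho\to\g_{\rho'}$ defined by $\phi(s,k,h):=(s,k,h+s\xi)$. A direct bracket computation produces two cancellations: the terms coming from $L-L_0=\ad_\xi$ cancel against the Heisenberg commutators created by the $s_i\xi$ shifts, and the residual ``mixed'' terms $s_1\,d\rho(k_2)\xi-s_2\,d\rho(k_1)\xi$ vanish by the $K$-invariance of $\xi$. Since $\phi$ is the identity on the $\mathfrak{k}$-summand, it integrates to a Lie group isomorphism $G_\rho\to G_{\rho'}$ with no compatibility issue on $\pi_1(K)$. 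The main subtle point is precisely this mixed-term cancellation, which is what forces the preliminary conjugation of $\rho(K)$ into $H_0$: only once $K$ acts through $H_0$ is the splitting of $\Der(\heis)$ $K$-equivariant, and only then does the shift $\phi$ respect brackets.
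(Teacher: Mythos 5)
Your proof is correct and follows essentially the same strategy as the paper's: both arguments first use compactness of $K$ (averaging over Haar measure) to conjugate $\rho(K)$ into the centralizer of the homotheties, and then correct the derivation ${\bf L}$ generating the $\R$-action by an inner derivation $\ad_\xi$ whose generator is fixed by $K$, so that the correction does not destroy commutativity with the $K$-action. The only differences are presentational: you phrase the averaging as the vanishing of a $1$-cocycle with values in the vector group $\mathrm{Inn}(\Heis)\cong\R^{2n}$ and you make the isomorphism $G_\rho\cong G_{\rho'}$ explicit via the shift $(s,k,h)\mapsto(s,k,h+s\xi)$, whereas the paper works with a concrete $K$-invariant splitting $\C^n=V_0\oplus V_1$ and produces the correcting element inside the fixed subspace $V_0$ using non-degeneracy of the symplectic form there, leaving the resulting isomorphism of groups implicit.
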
 
\begin{proof}
Since $K$ is compact, we may assume that the action of $K$ preserves the two factors of $\Heis=\R\times \C^n$. Then $K$ acts trivially on the center $\R$. On $\C^n$ it acts by symplectic morphisms with respect to the symplectic form $\omega$ that defines $\Heis$ as an extension of $\C^n$ by $\R$. There exists a $K$-invariant positive definite scalar product $B$ on $\R \times \C^n$ such that $\R\perp\C^n$. Let $\k$ be the Lie algebra of $K$. We decompose $\C^n=V_0\oplus V_1$, where $V_0:=\{v \in V, \  k(v)=0, \ \text{for all} \  k\in \k\}$ and $V_1$ is equal to $(\R\times V_0)^\perp$ with respect to $B$. Note that $V_1$ is spanned by $k(v)$ for arbitrary $k\in\k$ and $v\in V$. In particular, $\omega(V_0,V_1)=0$, thus $\omega$ is non-degenerate on $V_0$. Hence $\R\times V_0$ is a Heisenberg group. Each derivation ${\bf L}$ of $\heis$ which commutes with the action of $\k$ satisfies ${\bf L}(V_0)\subset \R\times V_0$ and ${\bf L}(V_1)\subset V_1$. Since $\R\times V_0$ is a Heisenberg group, there exists an element $h\in V_0$ such that ${\bf L}'={\bf L}+\ad(h)$ maps $V_0$ to $V_0$. Since $h$ is in $V_0$, ${\bf L}'$ commutes with the action of $\k$. Thus we may replace the derivation ${\bf L}$ that generates the action of $\R$ on $\Heis$ by ${\bf L}'$  to obtain $\rho'$. By construction $\rho'(\R\times K)$ preserves $\R$ and $\C^n$, which implies the assertion.   
    \end{proof}

\begin{theorem}\label{Theorem: General homogeneous}
      Let $G_\rho$ be defined as in the beginning of this section and let $\Gamma$ be a discrete subgroup of $G_{\rho}$ acting properly cocompactly on $X_{\rho}$. Then
    \begin{itemize}
        \item[(1)] $\Gamma$ is virtually nilpotent, and $X_{\rho}$ is standard,
        \item[(2)] or $\Gamma\cong \Z\ltimes \Gamma_0$, where $\Gamma_0$ is virtually nilpotent, and $X_{\rho}$ is semi-standard.
    \end{itemize}
Moreover, when $C=K$, the syndetic hull of $\Gamma$ (resp. $\Gamma_0$) acts transitively on $X_{\rho}$ (resp. on the $K \ltimes \Heis$-leaves).
\end{theorem}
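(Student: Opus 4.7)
The plan is to transport the arguments of Section~\ref{Section: Fundamental group} and Section~\ref{Section: syndetic hull, standard semi-Standrad} to the present more general setting, verifying that each step still goes through and identifying the one place where a new ingredient, namely the preceding lemma, is needed.

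First I would establish the structural statements (1) and (2) by following the proof of Theorem~\ref{Theorem Gamma} almost verbatim. The group $G_\rho$ remains amenable, as a compact extension of a solvable group, so any discrete subgroup is virtually polycyclic and hence finitely generated. I would then separate the straight from the non-straight case via the projection $p : G_\rho \to \R$ and run the Zassenhaus-neighborhood argument exactly as before. The only nontrivial point is the availability of a dilation automorphism $\Psi_a$ of $G_\rho$ that is the identity on $\R \times K$ and a Heisenberg homothety on $\Heis$: this is precisely what the preceding lemma guarantees, after replacing $\rho$ by the isomorphic $\rho'$ commuting with the homotheties $h_a$. In the straight case I do not need any analogue of Fact~\ref{fact2.7} (which in the original proof forced $\Gamma_0 \cap \Heis$ to have finite index in $\Gamma_0$), because here I only claim virtual nilpotence of $\Gamma_0$.

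For standardness in case (1) and semi-standardness in case (2), I would apply the strong Zassenhaus neighborhood (Fact~\ref{Thurston: strongly Zassenhaus neighborhood}) as in Propositions~\ref{Proposition: syndetic hull for non-straight Gamma} and~\ref{Proposition: partial syndetic hull for straight Gamma}. In the non-straight case this produces a nilpotent syndetic hull $N \subset G_\rho$ of a finite index subgroup of $\Gamma$, yielding standardness directly. In the straight case the same argument produces a nilpotent syndetic hull $N_0 \subset K \ltimes \Heis$ of $\Gamma_0$; the uniqueness of such a nilpotent hull, combined with $\hat\gamma \Gamma_0 \hat\gamma^{-1} = \Gamma_0$, forces $N_0$ to be $\hat\gamma$-invariant. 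Taking $G_0 := K \ltimes \Heis$, which contains $I = C \ltimes A^+$ and is normal in $G_\rho$ with quotient $\R$, the projection of $\Gamma$ to $G_\rho/G_0$ is an infinite cyclic lattice and $\Gamma \cap G_0 = \Gamma_0$ is standard with syndetic hull $N_0$, matching Definition~\ref{Definition: semi-standard}.

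For the transitivity claim when $C = K$, I would adapt Propositions~\ref{Proposition: transitive action along the Heis-leaves, without lattice}--\ref{Proposition: transitive action without lattice}. The reduction of cocompactness on $X_\rho$ to cocompactness on a single $(K \ltimes \Heis)$-leaf, provided by Lemma~\ref{Lemma: N co-compact implies N_0 co-compact}, uses only connectedness of $N$ and the projection $G_\rho \to \R$, so it carries over unchanged. For the leaf-level statement, let $N \subset K \ltimes \Heis$ be a connected nilpotent subgroup acting cocompactly on $Y := (K \ltimes \Heis)/(K \ltimes A^+)$. Since $C = K$ preserves $A^+$, the natural map $Y \to \Heis/A^+$ is a bijection on which $K \ltimes \Heis$ acts affinely. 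The projection of $N$ to the compact group $K$ is connected nilpotent, hence a torus. Choosing a $K$-invariant vector-space complement $A^-_K$ to $A^+$ inside $\mathfrak{a}$ (possible by averaging over $K$), I would then mimic the original argument: descend to the quotient by the center $Z$ (identified with $A^-_K$), apply Lemma~\ref{Lemma: solvable in O(n)xR^n} to the induced cocompact action on $A^-_K \cong \R^n$, conclude that this action is by pure translations and that the torus component collapses, and finally lift to transitivity on $Y$ as in Proposition~\ref{Proposition: transitive action along the Heis-leaves, without lattice}.

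The main obstacle I anticipate is precisely this last step. In the original Proposition~\ref{Proposition: transitive action along the Heis-leaves, without lattice}, the $K$-action preserved the Lagrangian $A^-$, giving a clean projection of $K \ltimes \Heis$ onto $\O(n) \ltimes A^-$. Here this preservation is lost in general, and the role of $A^-$ has to be played by a $K$-invariant but not necessarily Lagrangian complement $A^-_K$ to $A^+$. The delicate point will be to verify that compactness of $K$ still supplies enough structure for Lemma~\ref{Lemma: solvable in O(n)xR^n} to apply, and that no new phenomenon (coming from the richer $K$-action on $\Heis$) prevents the torus part of $N$ from collapsing. Once these points are settled, the remainder is routine bookkeeping.
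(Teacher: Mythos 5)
Your treatment of the structural statements (1) and (2) matches the paper's: the paper likewise observes that the only input needed to rerun the Zassenhaus/strong-Zassenhaus arguments of Theorem \ref{Theorem Gamma} and Propositions \ref{Proposition: syndetic hull for non-straight Gamma}--\ref{Proposition: partial syndetic hull for straight Gamma} is the existence of the dilation automorphisms supplied by the preceding lemma, and it handles the straight case by splitting according to whether $p_K(\Gamma_0)$ is finite (Malcev closure in $\Heis$) or non-discrete (strong Zassenhaus in $K\ltimes\Heis$) --- your uniform use of the strong Zassenhaus neighborhood subsumes both sub-cases and is fine. You are also right that no analogue of Fact \ref{fact2.7} is needed since the theorem only claims virtual nilpotence of $\Gamma_0$.

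The transitivity claim is where your proposal has a genuine gap, and where you miss the route the paper actually takes. You propose to generalize the lattice-free dynamical argument of Proposition \ref{Proposition: transitive action along the Heis-leaves, without lattice} to a $K$-action that no longer preserves $A^-$, and you explicitly leave open the key verification (that a $K$-invariant complement $A^-_K$ can play the role of $A^-$ and that the torus part of $N$ still collapses). As written this is not a proof: the step you flag as ``delicate'' is precisely the content of the statement. Moreover, that effort is unnecessary here. Unlike in Section \ref{Subsection:Transitivity of cocompact actions of Lie groups}, in the present theorem the syndetic hull $N$ (resp.\ $N_0$) contains $\Gamma$ (resp.\ $\Gamma_0$) as a uniform lattice, so the paper instead invokes Appendix \ref{Appendix A}: $G_\rho$ is linear (Proposition \ref{Proposition-Appendix A: G_rho linear}), hence by Selberg's lemma the lattice is virtually torsion free, and since $N$ acts properly cocompactly on the contractible space $X_\rho$ (contractibility is exactly why $C=K$ is assumed), the cohomological-dimension argument of Proposition \ref{Proposition-Appendix A: transitive action under existence of a lattice} forces the $N$-orbits to be open, hence $N$ acts transitively. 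Your dynamical route could likely be completed --- compactness of $K$ does give a $K$-invariant complement and semisimple linear parts, which is what the commutator argument in Proposition \ref{Proposition: transitive action along the Heis-leaves, without lattice} really uses --- but you should either carry out that verification in full or switch to the lattice-based argument, which is shorter and is what the availability of $\Gamma$ is for.
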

 
\begin{proof}
    We proceed analogously to the proofs of Theorem \ref{Theorem: standarness in non-straight case} and Theorem \ref{Theorem: standarness in straight case}. Either $\Gamma$ is non-straight or straight. In the non-straight case, since the representation $\rho$ commutes with $\Heis$ homotheties, the same proof as in Theorem $\ref{Theorem Gamma}$ allows to get that $\Gamma$ is generated by elements in a strong Zassenhaus neighborhood. Hence the existence of a (nilpotent) syndetic hull by Fact \ref{Thurston: strongly Zassenhaus neighborhood}, and the standardness of $\Gamma \backslash X_{\rho}$. In the straight case, we get that $\Gamma\cong \Z\ltimes \Gamma_0$, where $\Gamma_0 \subset K\ltimes \Heis$. Here, we discuss two cases: either $\Gamma_{0}$ has a finite projection to $K$, or the projection is not discrete. In the first case, $\Gamma_0 \cap \Heis$ has finite index in $\Gamma_0$, and its Malcev closure in $\Heis$ acts properly and cocompactly on the $\Heis$-leaves. In the second case, we proceed as in the non-straight case, and show that $\Gamma_0$ is generated by elements in  a strong Zassenhaus neighborhood of $K \ltimes \Heis$, hence the existence of a (nilpotent) syndetic hull of $\Gamma_0$ in $K \ltimes \Heis$ and the semi-standardness of $\Gamma \backslash X_{\rho}$. Now, when $C=K$,  the transitive action of the syndetic hull of $\Gamma$ (resp. $\Gamma_0$) on $X_{\rho}$ (resp. on a $\mathcal{F}$-leaf) follows from Proposition \ref{Proposition-Appendix A: transitive action under existence of a lattice} and Proposition \ref{Proposition-Appendix A: G_rho linear}.
\end{proof}

\begin{example}[Non-transitive action]
    In the modification of Example \ref{Example: non standard C-W}, $C=\SO(2)$ and $K=\mathbb{S}^1\times \SO(2)$. The syndetic hull $H$ of $\Gamma$ does not act transitively on $\widehat{X}$. Indeed, $H$ acts freely on $\widehat{X}$ and we have $H\backslash \widehat{X}\cong \mathbb{S}^{1}$.
\end{example}
\begin{remark}[Carnot groups]
     The Heisenberg group is a particular case of a Carnot group, to which the notion of a homothety can be generalised. An analogous version of Theorem \ref{Theorem: General homogeneous} holds for discrete subgroups of $(\R\times K)\ltimes_{\rho} N$, where $N$ is a Carnot group admitting a homothety that commutes with the action of $\R\times K$. 
\end{remark}

\begin{remark}
The homogeneous spaces in Theorem \ref{Theorem: General homogeneous} are not necessarily Lorentzian (or pseudo-Riemannian), as it appears from Proposition \ref{Prop6.5}. An easy situation that occurs here is when the $\rho$-action is trivial, i.e. $G_{\rho}=(\R\times K)\times \Heis$. Remember that $G_{\rho}$ preserves a Lorentzian metric on $G_{\rho}/I$ if and only if the $\ad(I)$-action on $\mathfrak{g}/\mathfrak{i}$ is an infinitesimal isometry of some Lorentzian scalar product. Here, $\ad(h)$, for $h \in \a^+$, is nilpotent of degree $2$, but a nilpotent non-zero endomorphism of a vector space is an infinitesimal isometry of some Lorentzian scalar product if and only if its nilpotency order equals $3$ (this is a well-known fact in pseudo-Riemannian geometry; for a proof see \cite[Lemma 5.7]{Content1}).
 
\end{remark}

\appendix

\section{Cocompact proper actions of Lie groups}\label{Appendix A}
In this appendix we consider cocompact proper actions of Lie groups admitting
a torsion free uniform lattice.
\begin{proposition}\label{Proposition-Appendix A: transitive action under existence of a lattice}
     Let $G$ be a connected Lie group which admits a torsion free uniform lattice $\Gamma$. Assume that $G$ acts properly cocompactly on a contractible manifold $X$.  
     Then $G$ acts transitively.
 
\end{proposition}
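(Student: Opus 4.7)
The plan is to compare the dimension of $X$ with that of a principal $G$-orbit using the cohomological dimension of $\Gamma$, and then to close up by a clopen argument in the compact quotient $\Gamma\backslash X$.

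First, I would observe that $\Gamma$ acts freely on $X$: since the $G$-action is proper, every stabilizer $G_x$ is a compact subgroup of $G$, so $\Gamma\cap G_x$ is a finite subgroup of the torsion-free group $\Gamma$ and hence trivial. The $\Gamma$-action is also cocompact on $X$: given a compact set $K_0\subset X$ with $G\cdot K_0=X$ and a compact set $K_G\subset G$ with $\Gamma\cdot K_G=G$, one has $\Gamma\cdot(K_G\cdot K_0)=X$. Therefore $M:=\Gamma\backslash X$ is a compact manifold, and since $X$ is contractible, $M$ is aspherical. In particular $H^{\dim X}(\Gamma;\Z/2)=H^{\dim X}(M;\Z/2)\neq 0$, so the cohomological dimension of $\Gamma$ satisfies $\operatorname{cd}(\Gamma)\geq \dim X$.

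Next, I would invoke the Cartan--Iwasawa--Malcev theorem: for a maximal compact subgroup $K_G\subset G$, the space $G/K_G$ is contractible of dimension $\dim G-\dim K_G$. Since $\Gamma$ is torsion-free, it acts freely on $G/K_G$, so $\Gamma\backslash G/K_G$ is a compact aspherical manifold of that dimension, which yields $\operatorname{cd}(\Gamma)\leq \dim G-\dim K_G$. Now I would pick a point $x_0\in X$ whose $G$-orbit has maximal dimension and set $H:=G_{x_0}$. Compactness of $H$ gives $\dim H\leq \dim K_G$, and stringing everything together one obtains
\[\dim G-\dim K_G\ \leq\ \dim G-\dim H\ =\ \dim(G\cdot x_0)\ \leq\ \dim X\ \leq\ \dim G-\dim K_G,\]
so equality holds throughout; in particular the orbit $G\cdot x_0\cong G/H$ is an open submanifold of $X$.

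To conclude, I would argue by a clopen-plus-connectedness trick in $M$. The image of the continuous map $\Gamma\backslash G/H\to M$ equals $\Gamma\backslash(G\cdot x_0)$. Since $\Gamma\backslash G$ and $H$ are both compact, $\Gamma\backslash G/H$ is compact, so its image is compact and hence closed in the Hausdorff space $M$. Since $G\cdot x_0$ is open in $X$, its $\Gamma$-image is also open in $M$. Being a nonempty clopen subset of the connected space $M$ (connectedness of $M$ follows from that of $X$), it must equal $M$, whence $G\cdot x_0=X$ and the $G$-action is transitive.

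The main technical input I foresee is the identification $\operatorname{cd}(\Gamma)\leq \dim G-\dim K_G$, which rests on Cartan--Iwasawa--Malcev for a general connected Lie group and on torsion-freeness of $\Gamma$ to make the $\Gamma$-action on $G/K_G$ free; once these classical ingredients are granted, the remainder is a pure dimension count combined with the standard clopen argument in a connected manifold.
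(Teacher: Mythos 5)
Your argument is correct and follows essentially the same route as the paper: both compute the cohomological dimension of $\Gamma$ twice, once from the closed aspherical manifold $\Gamma\backslash X$ and once from $\Gamma\backslash G/K_G$, deduce $\dim X=\dim G-\dim K_G$, conclude that orbits are open because stabilizers of the proper action are compact, and finish by connectedness. The only cosmetic differences are that you obtain the lower bound on $\operatorname{cd}(\Gamma)$ via $\Z/2$ Poincar\'e duality and close up a single maximal orbit by a clopen argument, whereas the paper cites the equality $\operatorname{cd}(\Gamma)=\dim(\Gamma\backslash X)$ directly and observes that all orbits are open.
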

The proof uses techniques from cohomology theory of discrete groups. 
 
\begin{proof}
Since $\Gamma$ is torsion free, $\Gamma \backslash X$ is a closed $K(\Gamma,1)$ manifold. We have by  \cite[VIII, (8.1)]{brown2012cohomology} that the cohomological dimension of $\Gamma$ is equal to $\dim (\Gamma \backslash X)$. 
  Now, let $K$ be a maximal compact subgroup of $G$. We know that $G$ is diffeomorphic to $K\times\R^{k}$ see \cite{iwasawa1949some}. We claim that  $\Gamma\backslash G/K$ is a $K(\Gamma, 1)$ manifold. Indeed, $\Gamma$ acts freely (torsion free) and cocompactly. Moreover, $\Gamma$ acts properly, since the fiber bundle map $\pi: G\longrightarrow G/K$ is a proper map. Hence, 
  \cite{brown2012cohomology} we get that $\dim(\Gamma\backslash G/K)=\dim(\Gamma\backslash X)$, i.e. $\dim(G)=\dim(K)+\dim(X)$. Because the $G$-action is proper (in particular the stabilizer of any point is compact), we conclude that the $G$-orbits are open. The claim follows from the connectedness of~$X$. 
\end{proof}

When $G$ is linear, Selberg's Lemma applies. Namely, any finitely generated subgroup of $G$ 
is virtually torsion free. For linear groups we get the following corollary.
\begin{cor}
    Let $G$ be a connected linear Lie group which admits a uniform lattice $\Gamma$. Assume that $G$ acts properly cocompactly on a contractible manifold $X$.  
     Then $G$ acts transitively.
\end{cor}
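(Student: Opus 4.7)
The plan is to reduce the statement to Proposition \ref{Proposition-Appendix A: transitive action under existence of a lattice} by passing to a torsion-free finite index subgroup of $\Gamma$, which is possible precisely because $G$ is assumed to be linear.

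First I would recall that a uniform lattice in a connected Lie group is always finitely generated (this follows from cocompactness together with the fact that $G$ is compactly generated, by a standard Milnor-\v{S}varc-type argument). In particular, $\Gamma$ is a finitely generated subgroup of a linear Lie group $G \subset \GL(n,\R)$.

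Next I would invoke Selberg's lemma, which states that every finitely generated subgroup of $\GL(n,\R)$ contains a torsion-free subgroup of finite index. Let $\Gamma' \subset \Gamma$ be such a subgroup. Since $[\Gamma : \Gamma'] < \infty$ and $\Gamma$ is a uniform lattice in $G$, the subgroup $\Gamma'$ is again a uniform lattice in $G$: discreteness is inherited, and $\Gamma' \backslash G$ is a finite cover of the compact manifold $\Gamma \backslash G$, hence compact.

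Finally, I would apply Proposition \ref{Proposition-Appendix A: transitive action under existence of a lattice} to $G$ and the torsion-free uniform lattice $\Gamma'$, using the hypothesis that $G$ acts properly and cocompactly on the contractible manifold $X$. The proposition yields transitivity of the $G$-action, which is the desired conclusion. The main step to verify carefully is the passage to $\Gamma'$ being a uniform lattice; everything else is a direct invocation of Selberg's lemma and the preceding proposition, so there is no serious obstacle.
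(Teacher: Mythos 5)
Your proposal is correct and is exactly the argument the paper intends: Selberg's lemma applied to the finitely generated uniform lattice $\Gamma$ yields a torsion-free finite-index subgroup $\Gamma'$, which is still a uniform lattice, and Proposition \ref{Proposition-Appendix A: transitive action under existence of a lattice} then gives transitivity. The only point the paper leaves implicit and you rightly flag --- that $\Gamma'$ remains a uniform lattice --- is handled correctly by your finite-cover observation.
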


\begin{proposition}\label{Proposition-Appendix A: G_rho linear}
$G_\rho$ is linear.
\end{proposition}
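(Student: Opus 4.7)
The plan is to exhibit a faithful finite-dimensional representation of $G_\rho=(\R\times K)\ltimes_\rho\Heis$, leveraging the fact that $\Heis$ is simply connected and $2$-step nilpotent. Consequently $\exp:\heis\to\Heis$ is a global diffeomorphism and the Baker-Campbell-Hausdorff formula collapses to a polynomial of degree at most two:
\[
\exp(\xi)\exp(\tilde x)=\exp\!\big(\xi+\tilde x+\tfrac12[\xi,\tilde x]\big).
\]
In exponential coordinates, left translation by $\exp(\xi)$ thus becomes the affine map $\tilde x\mapsto(I+\tfrac12\ad(\xi))\tilde x+\xi$, while any $\phi\in\Aut(\Heis)$ acts linearly via $d\phi$.

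Using this, I will define an action of $G_\rho$ on $\R^{2n+1}\cong\heis$ by the formula
\[
(r,k,h)\cdot\tilde x\ :=\ \log\!\big(h\cdot\exp\!\big(d\rho(r,k)(\tilde x)\big)\big).
\]
A routine check using that each $\rho(r,k)$ is an automorphism shows this is a left action, so we obtain a Lie group homomorphism $\sigma:G_\rho\to\Aff(\R^{2n+1})$. Reading off its affine decomposition (linear part $(I+\tfrac12\ad(\log h))\circ d\rho(r,k)$, translation part $\log h$), I expect to identify $\ker\sigma$ with $\ker\rho\subset \R\times K$: vanishing of the translation forces $h=e$, and then the linear part is just $d\rho(r,k)$.

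To cut out $\ker\sigma$ I will pair $\sigma$ with the natural projection $\pi:G_\rho\to G_\rho/\Heis=\R\times K$. The resulting map $(r,k,h)\mapsto(\sigma(r,k,h),(r,k))$ into $\Aff(\R^{2n+1})\times(\R\times K)$ is injective, since $\pi$ kills the $\R\times K$ component of any kernel element, after which $\sigma$ reduces to left translation by $h$ and forces $h=e$. The target is manifestly linear using the standard embeddings $\Aff(\R^{2n+1})\hookrightarrow \GL_{2n+2}(\R)$, $\R\hookrightarrow\GL_1(\R)$, and $K\subset\SO(n)\subset\GL_n(\R)$; assembling them block-diagonally yields a faithful linear representation of $G_\rho$.

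I do not anticipate a serious obstacle: the construction is direct and uses only BCH in degree $\le 2$ together with linearity of $\Aff(\R^N)$. The one subtlety is the potential non-triviality of $\ker\rho$ (which can happen, for example, if $\rho|_\R$ is periodic); embedding $G_\rho$ into the abstract semidirect product $\Aut(\Heis)\ltimes\Heis$ alone would not suffice, and pairing $\sigma$ with $\pi$ is precisely what circumvents this.
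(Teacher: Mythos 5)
Your proof is correct, and it realizes the same overall strategy as the paper by a more explicit route. The paper first maps $G_\rho$ to $\Aut(\Heis)\ltimes\Heis$ via $(r,k,h)\mapsto(\rho(r,k),h)$, observes that this ambient group has trivial center (because $\Heis$ admits homotheties), so that its adjoint representation is faithful, and then --- exactly as you do --- pairs the resulting representation with the projection to $\R\times K$ to absorb the possible non-injectivity of $\rho$. You replace the adjoint representation of the ambient group by the affine action of $G_\rho$ on $\heis\cong\R^{2n+1}$ in exponential coordinates: degree-$2$ BCH makes left translation by $h$ the affine map with linear part $I+\tfrac12\ad(\log h)$ and translation part $\log h$, so the translation part of $\sigma$ recovers $h$ on the nose and $\ker\sigma=\ker\rho\times\{e\}$ is read off immediately. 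This is more elementary (no appeal to the triviality of the center of $\Aut(\Heis)\ltimes\Heis$) and makes the representation completely concrete, at the price of checking by hand that your formula defines a left action --- which it does, since $\exp\circ\, d\rho(r,k)=\rho(r,k)\circ\exp$. Both arguments hinge on the same essential trick of pairing with $\pi:G_\rho\to\R\times K$, which you correctly identify as the point that handles $\ker\rho\neq\{1\}$. One small remark: the proposition is also invoked in Section \ref{Section: More homogeneous structures}, where $K$ is only assumed compact and connected rather than a subgroup of $\SO(n)$; in that generality your last step should cite the fact that every compact Lie group is linear (Peter--Weyl) instead of the inclusion $K\subset\SO(n)$, after which the argument goes through unchanged.
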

\begin{proof}
 We have a natural morphism $f: G_{\rho} \to \Aut(\Heis)\ltimes \Heis$ \; given by $f(r,k,h)= (\rho(r,k),h)$ (which is not necessarily injective). Moreover, the group  $\Aut(\Heis)\ltimes \Heis$ is linear. Indeed, it has a trivial center (due to existence of homotheties), hence the adjoint representation $\Ad:\Aut(\Heis)\ltimes \Heis \to \GL(\g)$, where $\g$ is the Lie algebra of $\Aut(\Heis)\ltimes \Heis$, is an embedding (faithful). Define now $\Phi: G_{\rho} \longrightarrow (\R\times K) \times \GL(\g) $, $\Phi(r,k,h) = (r,k, \Ad (f (r,k,h)))$. Then $\Phi$ is clearly a faithful morphism into $(\R\times K) \times \GL(\g)$, which is a linear group. The claim follows.   
\end{proof}

\section{A non-periodic example}\label{Appendix B}
Let $L=\R \ltimes \R^{n+1}$, with coordinates $(v,y) \in \R^{n+1}=\R \times \R^n$, and the $\R$-action defined by $t \cdot (v,y) = (v, R_t(y))$, where $R_t$ is some {periodic} elliptic action on $\R^n$. Define a Lorentzian left-invariant metric $g$ on $L$, such that the induced metric on $\R^{n+1}$ is degenerate, and $V:= \partial_v$ is lightlike. Then $(L,g)$ is a homogeneous plane wave, by \cite[Theorem 3]{Leis}. Let $\Gamma := \langle \hat{\gamma} \rangle \times \Gamma_0$, with $\Gamma_0$ a lattice in $\R^{n+1}$ and $\hat{\gamma}$ generates a lattice in the $\R$-factor acting trivially on $\Gamma_0$. Suppose further that $\Gamma_0$ does not intersect the subgroup generated by the $v$-translations. Then $\Gamma$ is a (uniform) lattice in $L$, and the flow of $V$ is not periodic in $\Gamma \backslash L$.


\begin{thebibliography}{10}

\bibitem{allout2022homogeneous}
Souheib Allout, Abderrahmane Belkacem, and Abdelghani Zeghib.
\newblock On homogeneous 3-dimensional spacetimes: focus on plane waves.
\newblock {\em arXiv preprint arXiv:2210.11439}, 2022.

\bibitem{Auslander1}
Louis Auslander.
\newblock Bieberbach's theorem on space groups and discrete uniform subgroups of {L}ie groups. {II}.
\newblock {\em Amer. J. Math.}, 83:276--280, 1961.

\bibitem{baum2014}
Helga Baum, Kordian L{\"a}rz, and Thomas Leistner.
\newblock On the full holonomy group of {L}orentzian manifolds.
\newblock {\em Mathematische Zeitschrift}, 277(3):797--828, 2014.

\bibitem{bieberbach1911bewegungsgruppen}
Ludwig Bieberbach.
\newblock {\"U}ber die {B}ewegungsgruppen der {E}uklidischen {R}{\"a}ume: erste abhandlung.
\newblock {\em Mathematische Annalen}, 70(3):297--336, 1911.

\bibitem{Blau}
Matthias Blau and Martin O'Loughlin.
\newblock Homogeneous plane waves.
\newblock {\em Nuclear Physics B}, 654(1):135--176, 2003.

\bibitem{boucetta2022kundt}
Mohamed Boucetta, Aissa Meliani, and Abdelghani Zeghib.
\newblock Kundt three-dimensional left invariant spacetimes.
\newblock {\em Journal of Mathematical Physics}, 63(11), 2022.

\bibitem{brown2012cohomology}
Kenneth~S Brown.
\newblock {\em Cohomology of groups}, volume~87.
\newblock Springer Science \& Business Media, 2012.

\bibitem{cahen1970lorentzian}
Michel Cahen and Nolan Wallach.
\newblock Lorentzian symmetric spaces.
\newblock {\em Bulletin of the American Mathematical Society}, 76(3):585--591, 1970.

\bibitem{markus-calabi}
E.~Calabi and L.~Markus.
\newblock Relativistic space forms.
\newblock {\em Ann. of Math. (2)}, 75:63--76, 1962.

\bibitem{Carriere}
Yves Carri\`ere.
\newblock Autour de la conjecture de {L}. {M}arkus sur les vari\'{e}t\'{e}s affines.
\newblock {\em Invent. Math.}, 95(3):615--628, 1989.

\bibitem{carriere1_Bieberbach}
Yves Carriere and Fran{\c{c}}oise Dal\'bo.
\newblock G{\'e}n{\'e}ralisations du premier th{\'e}oreme de {B}ieberbach sur les groupes cristallographiques.
\newblock {\em Enseign. Math.(2)}, 35(3-4):245--262, 1989.

\bibitem{Kassel-Danciger-2016}
Jeffrey Danciger, Fran{\c{c}}ois Gu{\'e}ritaud, and Fanny Kassel.
\newblock Geometry and topology of complete {L}orentz spacetimes of constant curvature.
\newblock {\em Ann. Sci. {\'E}c. Norm. Sup{\'e}r.(4)}, 49(1):1--56, 2016.

\bibitem{fried1981affine}
David Fried, William Goldman, and Morris~W Hirsch.
\newblock Affine manifolds with nilpotent holonomy.
\newblock {\em Commentarii Mathematici Helvetici}, 56:487--523, 1981.

\bibitem{fried1983three}
David Fried and William~M Goldman.
\newblock Three-dimensional affine crystallographic groups.
\newblock {\em Advances in Mathematics}, 47(1):1--49, 1983.

\bibitem{Leis}
Wolfgang Globke and Thomas Leistner.
\newblock Locally homogeneous pp-waves.
\newblock {\em J. Geom. Phys.}, 108:83--101, 2016.

\bibitem{goldman1985nonstandard}
William~M Goldman.
\newblock Nonstandard {L}orentz space forms.
\newblock {\em Journal of Differential Geometry}, 21(2):301--308, 1985.

\bibitem{goldman2022geometric}
William~M Goldman.
\newblock {\em Geometric structures on manifolds}, volume 227.
\newblock American Mathematical Society, 2022.

\bibitem{goldman1984fundamental}
William~M Goldman and Yoshinobu Kamishima.
\newblock The fundamental group of a compact flat {L}orentz space form is virtually polycyclic.
\newblock {\em Journal of Differential Geometry}, 19(1):233--240, 1984.

\bibitem{grunewald1988transitive}
Fritz Grunewald and Gregori Margulis.
\newblock Transitive and quasitransitive actions of affine groups preserving a generalized {L}orentz-structure.
\newblock {\em Journal of Geometry and Physics}, 5(4):493--531, 1988.

\bibitem{Kassel-Gueritaud-2015}
Fran{\c{c}}ois Gu{\'e}ritaud, Fanny Kassel, and Maxime Wolff.
\newblock Compact anti-de {S}itter 3-manifolds and folded hyperbolic structures on surfaces.
\newblock {\em Pacific Journal of Mathematics}, 275(2):325--359, 2015.

\bibitem{Content1}
Malek Hanounah, Lilia Mehidi, and Abdelghani Zeghib.
\newblock On homogeneous plane waves.
\newblock {\em arXiv preprint arXiv:2311.07459}, 2023.

\bibitem{iwasawa1949some}
Kenkichi Iwasawa.
\newblock On some types of topological groups.
\newblock {\em Annals of Mathematics}, pages 507--558, 1949.

\bibitem{KO}
Ines Kath and Martin Olbrich.
\newblock Compact quotients of {C}ahen-{W}allach spaces.
\newblock {\em Mem. Amer. Math. Soc.}, 262(1264):v+84, 2019.

\bibitem{klingler1996completude}
Bruno Klingler.
\newblock Compl{\'e}tude des vari{\'e}t{\'e}s {L}orentziennes {\`a} courbure constante.
\newblock {\em Mathematische Annalen}, 306(2):353--370, 1996.

\bibitem{Leis-Sch}
Thomas Leistner and Daniel Schliebner.
\newblock Completeness of compact {L}orentzian manifolds with abelian holonomy.
\newblock {\em Math. Ann.}, 364(3-4):1469--1503, 2016.

\bibitem{maeta2022}
Keiichi Maeta.
\newblock Four-dimensional compact {C}lifford-{K}lein forms of pseudo-{R}iemannian symmetric spaces with signature {$(2, 2)$}.
\newblock {\em Internat. J. Math.}, 34(12):Paper No. 2350071, 49, 2023.

\bibitem{mal1949class}
Anatolii~Ivanovich Mal'tsev.
\newblock On a class of homogeneous spaces.
\newblock {\em Izvestiya Rossiiskoi Akademii Nauk. Seriya Matematicheskaya}, 13(1):9--32, 1949.

\bibitem{mehidi2022completeness}
Lilia Mehidi and Abdelghani Zeghib.
\newblock On completeness and dynamics of compact {B}rinkmann spacetimes.
\newblock {\em arXiv preprint arXiv:2205.07243}, 2022.

\bibitem{milnor1977fundamental}
John Milnor.
\newblock On fundamental groups of complete affinely flat manifolds.
\newblock {\em Advances in Mathematics}, 25(2):178--187, 1977.

\bibitem{nomizu}
Katsumi Nomizu.
\newblock On the group of affine transformations of an affinely connected manifold.
\newblock {\em Proc. Amer. Math. Soc.}, 4:816--823, 1953.

\bibitem{raghunathan1972discrete}
Madabusi~S Raghunathan.
\newblock {\em Discrete subgroups of {L}ie groups}, volume~3.
\newblock Springer, 1972.

\bibitem{Thurston}
William~P. Thurston.
\newblock {\em Three-dimensional geometry and topology. {V}ol. 1}, volume~35 of {\em Princeton Mathematical Series}.
\newblock Princeton University Press, Princeton, NJ, 1997.
\newblock Edited by Silvio Levy.

\bibitem{thurston2022geometry}
William~P Thurston.
\newblock {\em The geometry and topology of three-manifolds: With a preface by {S}teven {P}. {K}erckhoff}, volume~27.
\newblock American Mathematical Society, 2022.

\bibitem{zeghib_AntideSitter}
Abdelghani Zeghib.
\newblock On closed anti-de {S}itter spacetimes.
\newblock {\em Mathematische Annalen}, 310(4):695--716, 1998.

\end{thebibliography}
\end{document}